\documentclass[12pt,reqno]{amsart}
\usepackage{amsfonts}
\usepackage{amssymb}
\usepackage{amsthm}
\usepackage{amsmath}
\theoremstyle{plain}
\newtheorem {Lem}{Lemma}
\newtheorem {The}{Theorem}

\usepackage{color}

\theoremstyle{remark}

\theoremstyle{definition}

\newtheorem {prob}{Problem}

\parindent=15pt
\parskip=3pt
\setlength{\textwidth}{6.5in}
\setlength{\oddsidemargin}{2pt}
\setlength{\evensidemargin}{2pt}
\setlength{\textheight}{215truemm}

\newcommand\Label[1]{\label{#1}}
\newcommand{\ep}{\epsilon}

\newif\ifcomm
\let\ifcomm\iffalse
\newcommand{\Form}{A,\Lambda}
\newcommand{\Fideal}[1]{#1,\Gamma} 
\newcommand{\FidealJ}[1]{#1,\Delta}


\newcommand{\FUnT}[2]{\FU(2n,t^{#1}A,t^{#1}#2,t^{#1}\Gamma)} 
\newcommand{\FUnTJ}[2]{\FU(2n,t^{#1}A,t^{#1}#2,t^{#1}\Delta)}
\newcommand{\FUnt}[2]{\FU(2n,t^{#1}#2,t^{#1}\Gamma)} 
\newcommand{\FUnmJ}[2]{\FU^{#2}\left(2n,\frac{#1}{t^{m}},\frac{\Delta}{t^{m}}\right)}

\newcommand{\FFUnt}[1]{\FU(2n,t^{#1}A,t^{#1}\Lambda)}
\newcommand{\FUntO}[2]{\FU^1(2n,t^{#1}#2,t^{#1}\Gamma)} 
\newcommand{\FUntOK}[2]{\FU^K(2n,t^{#1}#2,t^{#1}\Gamma)}
\newcommand{\EUnT}[2]{\FU(2n,t^{#1}#2,t^{#1}#2,t^{#1}\Lambda)}
\newcommand{\EUnt}[2]{\FU(2n,t^{#1}#2,t^{#1}\Lambda)}
\newcommand{\EUnm}[2]{\FU^{#2}\left(2n,\frac{#1}{t^{m}},\frac{\Lambda}{t^{m}}\right)}

\newcommand{\FUnTk}[3]{\FU^{#3}(2n,t^{#1}A,t^{#1}#2,t^{#1}\Gamma)} 
\newcommand{\FUntk}[3]{\FU^{#3}(2n,t^{#1}#2,t^{#1}\Gamma)} 
\newcommand{\LF}{\lfloor}
\newcommand{\RF}{\rfloor}
\newcommand{\gm}{\mathfrak m}

\def\Sp{\operatorname{Sp}}

\def\SO{\operatorname{SO}}

\def\GL{\operatorname{GL}}

\def\SU{\operatorname{SU}}
\def\GU{\operatorname{GU}}
\def\EU{\operatorname{EU}}
\def\FU{\operatorname{FU}}
\def\CU{\operatorname{CU}}

\def\Cent{\operatorname{Cent}}
\def\Max{\operatorname{Max}}

\def\jdim{\operatorname{j-dim}}

\def\map{\longrightarrow}
\def\bar{\overline}
\def\epsilon{\varepsilon}
\def\e{\varepsilon}
\def\a{\alpha}
\def\b{\beta}

\def\Ga{\Gamma}



\title[Relative Commutator Calculus]{Relative unitary commutator calculus, \\
and applications}

\author{Roozbeh Hazrat}
\address{Department of Pure Mathematics, Queen's University,
Belfast BT7 1NN, U.K.}
\email{r.hazrat@qub.ac.uk}
\author{Nikolai Vavilov}
\address{Department of Mathematics and Mechanics,
St.-Petersburg State University, St-Petersburg, Russia}
\email{nikolai-vavilov@yandex.ru}
\author{Zuhong Zhang}
\address{Department of  Mathematics, Beijing Institute of Technology, Beijing, China}
\email{zuhong@gmail.com}
\begin{thanks}
{The first author acknowledges the support of EPSRC first grant
scheme EP/D03695X/1. The work of the second author was supported by
NSh-8464.2006.1 and the RFFI projects 08-01-00756, 09-01-00762,
09-01-00784, 09-01-00878, 09-01-91333, 09-01-90304.
The third author acknowledges the support of NSFC grant 10971011. }
\end{thanks}

\begin{document}

\begin{abstract}
This note revisits localisation and patching method in the setting
of generalised unitary groups. Introducing certain subgroups of
relative elementary unitary groups, we develop relative versions of
the conjugation calculus and the commutator calculus in unitary groups,
which are both more general, and substantially easier than the ones
available in the literature. For the general linear group such relative
commutator calculus has been recently developed by the first and the
third authors. As an application we prove the mixed commutator formula,
$$ [\EU(2n,\Fideal{I}),\GU(2n,J,\Delta)]=
[\EU(2n,\Fideal{I}),\EU(2n,J,\Delta)], $$
\noindent
for two form ideals $(\Fideal{I})$ and $(J,\Delta)$ of a form ring
$(\Form)$. This answers two problems posed in a paper by Alexei Stepanov
and the second author.
\end{abstract}

\maketitle




\section{Introduction}

One of the most powerful ideas in the study of classical groups over rings
is localisation. It allows to reduce many important problems over
various classes of rings subject to commutativity conditions, to
similar problems for semi-local rings. Localisation comes in a
number of versions. The two most familiar ones are {\bf localisation
and patching}, proposed by Daniel Quillen and Andrei Suslin \cite{Sus},
and {\bf localisation--completion}, proposed by Anthony Bak \cite{B4}.
\par
Originally, the above papers addressed the case of the general linear
group $\GL(n,A)$. Soon thereafter, Suslin himself, Vyacheslav Kopeiko,
Marat Tulenbaev, Leonid Vaserstein, Li Fuan, Eiichi Abe, and others
proposed working versions of localisation and patching for other
classical groups, such as symplectic and orthogonal ones, as well
as unitary groups, under some additional simplifying assumptions,
see, for example, \cite{kopeiko,V1,Li1,Li2} and further references in
\cite{NV91,BV3,SV,RN}.
\par
In the most general setting of quadratic modules, similar development
took more time. In fact, the first full scale treatment of
locali\-sa\-tion--completion was proposed only in the Bielefeld Thesis
by the first author \cite{RH,RH2}.
Quite remarkably, the first exhaustive treatment of
localisation and patching came only afterwards, in the St.-Petersburg
Thesis by Victor Petrov \cite{P1} -- \cite{petrov3} and was {\it strongly\/}
influenced by \cite{RH,RH2}.
\par
As a matter of fact, both methods rely on a large body of common calculations,
and technical facts, known as {\bf conjugation calculus} and
{\bf commutator calculus}. Oftentimes these calculations are even referred
to as the {\bf yoga of conjugation}, and the {\bf yoga of commutators},
to stress the overwhelming feeling of technical strain and exertion.
In the unitary case, due to the following circumstances,
\par\smallskip
$\bullet$ the presence of long and short roots,
\par\smallskip
$\bullet$ complicated elementary relations, 
\par\smallskip
$\bullet$ non-commutativity,
\par\smallskip
$\bullet$ non-trivial involution,
\par\smallskip
$\bullet$ non-trivial form parameter,
\par\smallskip\noindent
these calculations tend to be especially lengthy, and highly involved.
\par
A specific motivation for the present work was the desire to create
tools to prove {\it relative\/} versions of structure results for unitary
groups. One typical such result in which we were particularly interested,
is description of subnormal subgroups, or, what is almost the same,
description of subgroups of the unitary groups $\GU(2n,A,\Lambda)$,
normalised by a relative elementary subgroup $\EU(2n,I,\Gamma)$, see
\cite{ZZ} -- \cite{ZZ2}.
\par
Another one was generalisation of the mixed commutator formula
$$ [E(n,R,A),\GL(n,R,B)]=[E(n,R,A),E(n,R,B)], $$
\noindent
proved in the setting of general linear groups by Alexei Stepanov and
the second author~\cite{VS8} where here $R$ is a ring and $A$ and $B$ are two sided ideals of $R$.  This formula is a common generalisation
of the standard commutator formulae. At the stable level, these
formulae were first established in the work of Hyman Bass~\cite{Bass2}.
In another decade, Andrei Suslin, Leonid Vaserstein, Zenon Borevich,
and the second author \cite{Sus,Tul,V1,BV85,SV} discovered that for
commutative rings similar formulae hold for all $n\geq 3$. However,
for two relative subgroups such formulae were proven only at the
stable level, by Alec Mason \cite{Mason74} -- \cite{MAS3}.
\par
However, the proof in \cite{VS8} relied on a strong form of decomposition of
unipotents \cite{SV}, and was not likely to directly generalise to other
classical groups. The authors of \cite{VS8} raised the problems of establishing
this formula via localisation method, and to generalise it to the
general setting of quadratic modules \cite[Problem~1 and Problem~2]{VS8}.
\par
In the paper \cite{RZ} the first and the third authors developed relative
versions of conjugation calculus and commutator calculus in the
general linear group $\GL(n,R)$, thus solving \cite[Problem~1]{VS8}. However,
we believe that the importance and applicability of the method itself
far surpass this immediate application.
\par
In the present paper, which is a sequel of \cite{RZ}, we in a similar way
evolve relative {\it unitary\/} conjugation calculus and commutator
calculus, and, in particular, solve \cite[Problem~2]{VS8}.
Actually, the present paper does not depend on the calculations
from \cite{RH} and \cite{RH2}. Instead, here we establish relative versions of
these results from scratch, in a more general setting. The resulting
versions of conjugation calculus and commutator calculus are both
more general, and {\it substantially easier\/} than the ones
available in the literature.
\par
The overall scheme is always that devised by the first author in
\cite{RH,RH2}.
However, we propose several important technical innovations, and
simplifications. Some such simplifications are similar to those
proposed by the first and the second authors in \cite{RN1}. Most
importantly, following \cite{RZ} we introduce certain subgroups of
relative elementary quadratic groups, and prove all results not at the
absolute, but at the relative level. Another important improvement is
that we notice that the case analysis in the proof of Lemmas \ref{Lem:cong}
and \ref{Lem:comm}
which provide the base of induction, can be cut in half.
\par
As an immediate application of our methods we prove the following
mixed commutator formula.
\begin{The}\label{main}
Let $n\ge 3$, $R$ be a commutative ring, $(\Form)$ be a form ring such
that $A$ is a quasi-finite $R$-algebra. Further, let $(I,\Gamma)$ and
$(J,\Delta)$ be two form ideals of a form ring $(\Form)$. Then
\begin{equation}\Label{eq:final}
\big[\EU(2n,I,\Gamma),\GU(2n,J,\Delta)\big]=
\big[\EU(2n,I,\Gamma),\EU(2n,J,\Delta)\big].
\end{equation}
\end{The}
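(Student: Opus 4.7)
The plan is to establish the non-trivial inclusion $\subseteq$ in \eqref{eq:final} by combining the relative unitary commutator calculus developed in this paper with Bak-style localisation and patching; the reverse inclusion $\supseteq$ is immediate from $\EU(2n,J,\Delta)\subseteq\GU(2n,J,\Delta)$. Concretely, it suffices to check that for every $z$ in a convenient generating set of $\EU(2n,I,\Gamma)$ and every $g\in\GU(2n,J,\Delta)$, the commutator $[z,g]$ lies in the right-hand side. The natural generating set consists of the Stein--Tits type elements $T_{ji}(a)\,T_{ij}(\xi)\,T_{ji}(-a)$, with $\xi$ of level $(I,\Gamma)$ and $a\in A$, which generate $\EU(2n,I,\Gamma)$ as an abstract subgroup rather than merely as a normal subgroup of $\EU(2n,A,\Lambda)$.

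Using the hypothesis that $A$ is quasi-finite over the commutative ring $R$, I would then localise at every maximal ideal $\mathfrak m$ of $R$. For a suitable denominator $s\in R\setminus\mathfrak m$ and a sufficiently large power $t^{m}$, the image of $g$ in the localised ring admits a factorisation $g=g_{0}\cdot h$, where $g_{0}$ lies in the elementary unitary group at level $(J,\Delta)$ over the localisation and $h$ is principally congruent to the identity modulo $t^{m}$. This is precisely the Quillen--Suslin--Bak setup that the notational apparatus $(\FormT{m})$ and the principal-congruence subgroups introduced earlier in the paper are designed to handle.

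The heart of the proof is then to feed $z$ and $g=g_{0}\cdot h$ into the relative conjugation calculus and the relative commutator calculus (Lemmas~\ref{Lem:cong} and~\ref{Lem:comm}). These rewrite $[z,g]$ as an explicit finite product of commutators whose left factor lies in $\EU(2n,I,\Gamma)$ and whose right factor lies in $\EU(2n,J,\Delta)$, together with a residual term that vanishes after clearing denominators. A standard patching step over a finite cover of $\operatorname{Spec}(R)$ by principal open sets then globalises the identity, yielding \eqref{eq:final}.

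The main obstacle will be the level bookkeeping in the third step: a naive commutator expansion produces elements of relative elementary subgroups indexed by sums and products of $(I,\Gamma)$ and $(J,\Delta)$, rather than landing squarely in the pure mixed commutator set on the right of \eqref{eq:final}. This is exactly the difficulty the paper is engineered to overcome: by introducing the intermediate subgroups of the relative elementary unitary groups and developing both yogas at the relative level from the outset, each intermediate expression already sits in a group of the correct shape, so that after the localisation--patching reduction the decomposition into the mixed commutator set becomes visible without any further appeal to decomposition of unipotents as in~\cite{VS8}.
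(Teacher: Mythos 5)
Your overall strategy --- relative conjugation and commutator calculus, localisation at maximal ideals, a local factorisation of $g$, and patching --- is indeed the one the paper's first proof follows, but the two most delicate steps are mis-stated in a way that leaves genuine gaps. First, the local factorisation. You write $F(g)=g_0h$ with $g_0$ elementary and $h$ \emph{principally congruent to the identity modulo $t^m$}; no such decomposition is available, and nothing in the paper produces one. What localisation actually buys (Lemma~\ref{Lem:08}) is surjective stability: $A_\gm$ is module finite over the local ring $R_\gm$, hence semi-local, hence of stable rank $1$, so that $\GU(2n,J_\gm,\Delta_\gm)=\EU(2n,J_\gm,\Delta_\gm)\,\GU(2,J_\gm,\Delta_\gm)$. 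The residual factor $h$ is a $2\times 2$ block which is in no sense small; it is disposed of not as a ``residual term that vanishes after clearing denominators'' but by arranging its hyperbolic plane away from the support of the fixed generator $T_{hk}(\cdot)$, so that $h$ \emph{commutes} with it and drops out of the commutator. Without this, the non-elementary part of $g$ cannot be handled. (Also, one must localise at maximal ideals of $R_0$, the subring generated by the norms $a\bar a$, not of $R$: the form parameter $\Lambda$ is only an $R_0$-module.)

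Second, the return from $A_t$ to $A$. The localisation homomorphism $F_t$ is not injective in general, so an identity verified in $\GU(2n,A_t,\Lambda_t)$ does not automatically hold in $\GU(2n,A,\Lambda)$. The paper first reduces the quasi-finite case to the case where $A$ is module finite over a Noetherian $R_0$ (via direct limits, \S\ref{sub:1.3}) and then invokes the injectivity of $F_t$ on the principal congruence subgroup $\GU(2n,t^lA,t^l\Lambda)$; your proposal never confronts this point. Finally, the patching of Lemma~\ref{Lem:09} works by splitting the \emph{generator}: one writes $\sum_i t_i^{p_i}x_i=1$ in $R_0$ and uses additivity $T_{hk}(\alpha)=\prod_i T_{hk}(t_i^{p_i}x_i\alpha)$. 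This forces one to start from the unconjugated generators $T_{hk}(\alpha)\in\FU^1(2n,I,\Gamma)$ --- additivity fails for your conjugated generators $Z_{ij}(\xi,\zeta)$ --- and only afterwards pass to the normal closure $\EU(2n,I,\Gamma)$, using that both $\GU(2n,J,\Delta)$ and the right-hand side of (\ref{eq:final}) are normalised by $\EU(2n,\Form)$. With these repairs your outline becomes the paper's localisation proof.
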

This theorem is a very broad generalisation of many preceding results,
including the following ones --- which, in turn, generalise a lot
previous of results!
\par\smallskip
$\bullet$ Absolute standard unitary commutator formulae, Bak--Vavilov
\cite{BV3}, Theorem 1.1 and Vaserstein--Hong You \cite{VY}.
\par\smallskip
$\bullet$ Relative unitary commutator formula at the stable level,
under some additional stability assumptions, Habdank \cite{Ha1,Ha2}.
\par\smallskip
$\bullet$ Relative commutator formula for the general linear group
$\GL(n,R)$, Stepanov--Vavilov and Hazrat--Zhang \cite{VS8,VS10,RZ}.
This case is obtained, as one sets in our Theorem, $A=R\oplus R^0$.
\par\smallskip
\par\smallskip
Observe, that in the above generality (relative, without stability
conditions) our results are new already for the following familiar cases.
\par\smallskip
$\bullet$ The case of symplectic groups $\Sp(2l,R)$, when the involution is
trivial, and $\Lambda=R$.
\par\smallskip
$\bullet$ The case of split orthogonal groups $\SO(2l,R)$, when the
involution is trivial and $\Lambda=0$.
\par\smallskip
$\bullet$ The case of classical unitary groups $\SU(2l,R)$,
when $\Lambda=\Lambda_{\max}$.
\par\smallskip\noindent
See \cite{HO} \S5.2B for further discussion on the generalised unitary
groups.
\par
Actually, in \S\S8,9 we give {\it another\/} proof of Theorem~\ref{main},
imitating that of \cite{VS10}. Namely, we show, that Theorem~1 can be
deduced from the {\it absolute\/} standard commutator formula by
careful calculation of levels of the above commutator groups, and
some group-theoretic arguments.
\par
Nevertheless, we believe that our localisation proof, based on the
relative conjugation calculus and commutator calculus, we develop in
\S\S\ref{ghg},\ref{ghgh} of the present paper, and especially the
calculations themselves, are of independent value, and will be used
in many further applications.
\par
The paper is organised as follows. In \S\S2--4 we recall basic
notation, and some background facts, used in the sequel.
The next two sections constitute the technical core of the paper.
Namely, in \S5, and in \S6 we develop relative unitary conjugation
calculus, and relative unitary commutator calculus, respectively.
After that we are in a position to give a localisation proof of
Theorem~1 in \S7. In \S8 we calculate the levels of the mixed
commutator subgroups. Using these calculations in \S9 we give another proof
of Theorem~1, deducing it from the {\it absolute\/} standard commutator
formula. There we also obtain slightly more precise results in some special
situations, for instance, when $A$ itself is commutative or when $I$
and $J$ are comaximal, $I+J=A$. Finally, in \S10 we state and briefly
discuss some further related problems.


\section{Form rings and form ideal}


The notion of $\Lambda$-quadratic forms, quadratic modules and generalised
unitary groups over a form ring $(A,\Lambda)$ were introduced by Anthony
Bak in his Thesis, see \cite{B1,B2}. In this section, and the next one, we
{\it very briefly\/} review the most fundamental notation and results
that will be constantly used in the present paper. We refer to
\cite{B2,HO,BV3,RH,RH2,RN} for details, proofs, and further references.


\subsection{}\label{form algebra}
Let $R$ be a commutative ring with $1$, and $A$ be an (not necessarily commutative)
$R$-algebra. An involution, denoted by $\bar{}$, is an anti-morphism of $A$ of
order $2$. Namely, for  $\alpha,\beta\in A$, one has
$\overline{\alpha+\beta}=\bar\alpha+\bar\beta$,
$\overline{\alpha\beta}=\bar\beta\bar\alpha$ and $\bar{\bar\alpha}=\alpha$.
Fix an element $\lambda\in\Cent(A)$ such that $\lambda\bar\lambda=1$. One may
define two additive subgroups of $A$ as follows:
$$ \Lambda_{\min}=\{\alpha-\lambda\alpha\mid a\in A\}, \qquad
\Lambda_{\max}=\{\alpha\in A\mid \alpha=-\lambda\bar\alpha\}. $$
\noindent
A {\em form parameter} $\Lambda$ is an additive subgroup of $A$ such that
\begin{itemize}
\item[(1)] $\Lambda_{\min}\subseteq\Lambda\subseteq\Lambda_{\max}$,
\smallskip
\item[(2)] $\alpha\Lambda\bar\alpha\subseteq\Lambda$ for all $\alpha\in A$.
\end{itemize}
The pair $(A,\Lambda)$ is called a {\em form ring}.


\subsection{}\label{form ideals}
Let $I\unlhd A$ be a two-sided ideal of $A$. We assume $I$ to be
involution invariant, i.~e.~such that $\bar I=I$. Set
$$ \Gamma_{\max}(I)=I\cap \Lambda, \qquad
\Gamma_{\min}(I)=\{\xi-\lambda\bar\xi\mid\xi\in I\}+
\langle\xi\alpha\bar\xi\mid \xi\in I,\alpha\in\Lambda\rangle. $$
\noindent
A {\em relative form parameter} $\Gamma$ in $(\Form)$ of level $I$ is an
additive group of $I$ such that
\begin{itemize}
\item[(1)] $\Gamma_{\min}(I)\subseteq \Gamma \subseteq\Gamma_{\max}(I)$,
\smallskip
\item[(2)] $\alpha\Gamma\bar\alpha\subseteq \Gamma$ for all $\alpha\in A$.
\end{itemize}
The pair $(I,\Gamma)$ is called a {\em form ideal}.
\par
In the level calculations we will use sums and products of form
ideals. Let $(I,\Gamma)$ and $(J,\Delta)$ be two form ideals. Their sum
is artlessly defined as $(I+J,\Gamma+\Delta)$, it is immediate to verify
that this is indeed a form ideal.
\par
Guided by analogy, one is tempted to set
$(I,\Gamma)(J,\Delta)=(IJ,\Gamma\Delta)$. However, it is considerably
harder to correctly define the product of two relative form parameters.
The papers \cite{Ha1,Ha2,RH,RH2} introduce the following definition
$$ \Gamma\Delta=\Gamma_{\min}(IJ)+{}^J\Gamma+{}^I\Delta, $$
\noindent
where
$$ {}^J\Gamma=\big\langle \xi\Gamma\bar\xi\mid \xi\in J\big\rangle,\qquad
{}^I\Delta=\big\langle \xi\Delta\bar\xi\mid \xi\in I\big\rangle. $$
\noindent
One can verify that this is indeed a relative form parameter of level $IJ$ if $IJ=JI$. Otherwise one needs to consider
the symmetrised product
$$ (I,\Gamma)(J,\Delta)+(J,\Delta)(I,\Gamma)=
\big(IJ+JI,\Gamma_{\min}(IJ+JI)+{}^J\Gamma+{}^I\Delta\big). $$


\subsection{}\label{quasi-finite}
A {\em form algebra over a commutative ring $R$} is a form ring $(A,\Lambda)$,
where $A$ is an $R$-algebra and the involution leaves $R$ invariant, i.e.,
$\bar R=R$.
\par\smallskip
$\bullet$ A form algebra $(\Form)$ is called {\it module finite}, if $A$ is
finitely generated as an $R$-module.
\par\smallskip
$\bullet$ A form algebra $(\Form)$ is called {\it quasi-finite},
if there is a direct system of module finite  $R$-subalgebras $A_i$ of
$A$ such that $\varinjlim A_i=A$.
\par\smallskip
However, in general $\Lambda$ is not an $R$-module. This forces us to
replace $R$ by its subring $R_0$, generated by all $\alpha\bar\alpha$
with $\alpha\in R$. Clearly, all elements in $R_0$ are invariant with
respect to the involution, i.~e.\ $\bar r=r$, for $r\in R_0$.
\par
It is immediate, that any form parameter $\Lambda$ is an $R_0$-module.
This simple fact will be used throughout. This is precisely why we have
to localise in multiplicative subsets of $R_0$, rather than in those of
$R$ itself.


\subsection{}\label{localization}
Let $(A,\Lambda)$ be a form algebra over a commutative ring $R$ with $1$,
and let $S$ be a multiplicative subset of $R_0$, (see \S\ref{quasi-finite}).
For any $R_0$-module $M$ one can consider its localisation $S^{-1}M$
and the corresponding localisation homomorphims $F_S:M\map S^{-1}M$.
By definition of the ring $R_0$ both $A$ and $\Lambda$ are $R_0$-modules,
and thus can be localised in $S$.
\par
In the present paper, we mostly use localisation with respect to the
following two types of multiplication systems of $R_0$.
\par\smallskip
$\bullet$ {\it Principal localisation\/}: for any $s\in R_0$ with $\bar s=s$,
the multiplicative system generated by $s$ is defined as
$\langle s\rangle=\{1,s,s^2,\ldots\}$. The localisation of the form algebra
$(\Form)$ with respect to multiplicative system $\langle s\rangle$ is usually
denoted by $(A_s,\Lambda_s)$, where as usual $A_s=\langle s\rangle^{-1}A$ and
$\Lambda_s=\langle s\rangle^{-1}\Lambda$ are the usual principal localisations
of the ring $A$ and the form parameter $\Lambda$.
Notice that, for each $\alpha\in A_s$, there exists an integer $n$ and an
element $a\in A$ such that $\displaystyle\alpha=\frac a{s^n}$, and for
each $\xi\in\Lambda_s$, there exists an integer $m$ and an element
$\zeta\in\Lambda$ such that $\displaystyle\xi=\frac\zeta{s^m}$.
\par\smallskip
$\bullet$ {\it Maximal localisation\/}: consider a maximal ideal $\gm\in\Max(R_0)$
of $R_0$ and the multiplicative closed set $S_{\gm}=R_0\backslash\gm$. We
denote the localisation of the form algebra $(\Form)$ with respect to $S_{\gm}$
by $(A_\gm,\Lambda_\gm)$, where $A_\gm=S_{\gm}^{-1}A$ and
$\Lambda_\gm=S_{\gm}^{-1}\Lambda$ are the usual maximal localisations of the
ring $A$ and the form parameter, respectively.
\par\smallskip
In these cases the corresponding localisation homomorphisms will be
denoted by $F_s$ and by $F_{\gm}$, respectively.
\par
The following fact is verified by a straightforward computation.
\begin{Lem}\Label{Lem:03}
For any\/ $s\in R_0$ and for any\/ $\gm\in\Max(R_0)$ the
pairs\/ $(A_s,\Lambda_s)$ and\/ $(A_\gm,\Lambda_\gm)$ are form rings.
\end{Lem}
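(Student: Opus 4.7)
The plan is to extend the structure of the form ring $(A,\Lambda)$ piece by piece along the localisation map $F_S\colon A \map S^{-1}A$ (taking $S = \langle s\rangle$ or $S = S_{\gm}$), verifying each of the axioms in \S\ref{form algebra} in turn. The key observation that makes the whole argument go through is that the multiplicative sets in both cases lie in $R_0$, and every element of $R_0$ is fixed by the involution; hence $S$ is involution-invariant, and the involution on $A$ descends to a well-defined involution on $S^{-1}A$ via $\overline{a/s} := \bar a/s$. The central element $\lambda$ also has a canonical image in $S^{-1}A$, remains central there, and still satisfies $\lambda\bar\lambda = 1$.

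Next I would verify that $S^{-1}\Lambda \subseteq S^{-1}A$ is a form parameter. It is manifestly an additive subgroup. For the inclusion $\Lambda_{\min}(S^{-1}A) \subseteq S^{-1}\Lambda$, given $\alpha = a/s^n \in S^{-1}A$, write $\alpha - \lambda\bar\alpha = (a - \lambda\bar a)/s^n$, and note that $a - \lambda\bar a \in \Lambda_{\min} \subseteq \Lambda$. For $S^{-1}\Lambda \subseteq \Lambda_{\max}(S^{-1}A)$, given $\zeta/s^m \in S^{-1}\Lambda$ with $\zeta \in \Lambda$, apply $\zeta = -\lambda\bar\zeta$ and divide by $s^m$ (which is fixed by the involution). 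For the conjugation condition $\alpha (S^{-1}\Lambda) \bar\alpha \subseteq S^{-1}\Lambda$, write $\alpha = a/s^n$, $\xi = \zeta/s^m$, and compute $\alpha\xi\bar\alpha = (a\zeta\bar a)/s^{2n+m}$; the numerator lies in $\Lambda$ by the corresponding axiom for $(\Form)$.

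Both localisations $(A_s,\Lambda_s)$ and $(A_{\gm},\Lambda_{\gm})$ are instances of the same construction $(S^{-1}A, S^{-1}\Lambda)$, so one uniform argument handles both cases. The only mild subtlety is to double-check that the descended involution on $S^{-1}A$ is well-defined on equivalence classes $a/s \sim a'/s'$; this reduces to the fact that $s, s' \in R_0$ satisfy $\bar s = s$, $\bar{s'} = s'$. Nothing genuinely unexpected arises, which is why the authors are content to call this a ``straightforward computation''.

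The step that requires the most attention, and which one might call the main (modest) obstacle, is keeping track of why $R_0$ rather than $R$ is the correct base: if $s$ were allowed to range in $R \setminus R_0$, the formula $\overline{a/s} = \bar a/s$ would typically fail, and the form parameter would in general not descend to the localisation. Once this is appreciated, the verification of all form-ring axioms for $(A_s,\Lambda_s)$ and $(A_{\gm},\Lambda_{\gm})$ is routine.
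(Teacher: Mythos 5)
Your verification is correct and is precisely the ``straightforward computation'' that the paper alludes to without writing out: the paper gives no proof of this lemma beyond that remark. You correctly identify the one point of substance --- that $S\subseteq R_0$ consists of involution-fixed central elements, so the involution and the form parameter descend to the localisation --- and the remaining axiom checks are carried out as intended.
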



\section{Unitary groups}

In the present section we recall basic notation and facts related to
Bak's generalised unitary groups and their elementary subgroups.


\subsection{}\Label{general}
Let, as above, $A$ be an associative ring with 1. For natural $m,n$
we denote by $M(m,n,A)$ the additive group of $m\times n$ matrices
with entries in $A$. In particular $M(m,A)=M(m,m,A)$ is the ring of
matrices of degree $n$ over $A$. For a matrix $x\in M(m,n,A)$ we
denote by $x_{ij}$, $1\le i\le m$, $1\le j\le n$, its entry in the
position $(i,j)$. Let $e$ be the identity matrix and $e_{ij}$,
$1\le i,j\le n$, be a standard matrix unit, i.e.\ the matrix which has
1 in the position $(i,j)$ and zeros elsewhere.
\par
As usual, $\GL(m,A)=M(m,A)^*$ denotes the general linear group
of degree $m$ over $A$. The group $\GL(m,A)$ acts on the free right
$A$-module $V\cong A^{m}$ of rank $m$. Fix a base $e_1,\ldots,e_{m}$
of the module $V$. We may think of elements $v\in V$ as columns with
components in $A$. In particular, $e_i$ is the column whose $i$-th
coordinate is 1, while all other coordinates are zeros.
\par
Actually, in the present paper we are only interested in the case,
when $m=2n$ is even. We usually number the base
as follows: $e_1,\ldots,e_n,e_{-n},\ldots,e_{-1}$. All other
occuring geometric objects will be numbered accordingly. Thus,
we write $v=(v_1,\ldots,v_n,v_{-n},\ldots,v_{-1})^t$, where $v_i\in A$,
for vectors in $V\cong A^{2n}$.
\par
The set of indices will be always ordered accordingly,
$\Omega=\{1,\ldots,n,-n,\ldots,-1\}$. Clearly, $\Omega=\Omega^+\sqcup\Omega^-$,
where $\Omega^+=\{1,\ldots,n\}$ and $\Omega^-=\{-n,\ldots,-1\}$. For an
element $i\in\Omega$ we denote by $\e(i)$ the sign of $\Omega$, i.e.\
$\e(i)=+1$ if $i\in\Omega^+$, and $\e(i)=-1$ if $i\in\Omega^-$.

\subsection{}\Label{unitary} For a form ring $(\Form)$, one considers the
{\it hyperbolic unitary group\/} $\GU(2n,\Form)$, see~\cite[\S2]{BV3}.
This group is defined as follows:
\par
One fixes a symmetry $\lambda\in\Cent(A)$, $\lambda\bar\lambda=1$ and
supplies the module $V=A^{2n}$ with the following $\lambda$-hermitian form
$h:V\times V\map A$,
$$ h(u,v)=\bar u_1v_{-1}+\ldots+\bar u_nv_{-n}+
\lambda\bar u_{-n}v_n+\ldots+\lambda\bar u_{-1}v_1. $$
\noindent
and the following $\Lambda$-quadratic form $q:V\map A/\Lambda$,
$$ q(u)=\bar u_1 u_{-1}+\ldots +\bar u_n u_{-n} \mod\Lambda. $$
\noindent
In fact, both forms are engendered by a sesquilinear form $f$,
$$ f(u,v)=\bar u_1 v_{-1}+\ldots +\bar u_n v_{-n}. $$
\noindent
Now, $h=f+\lambda\bar{f}$, where $\bar f(u,v)=\bar{f(v,u)}$, and
$q(v)=f(u,u)\mod\Lambda$.
\par
By definition, the hyperbolic unitary group $\GU(2n,A,\Lambda)$ consists
of all elements from $\GL(V)\cong\GL(2n,A)$ preserving the $\lambda$-hermitian
form $h$ and the $\Lambda$-quadratic form $q$. In other words, $g\in\GL(2n,A)$
belongs to $\GU(2n,A,\Lambda)$ if and only if
$$ h(gu,gv)=h(u,v)\quad\text{and}\quad q(gu)=q(u),\qquad\text{for all}\quad u,v\in V. $$
\par
When the form parameter is not maximal or minimal, these groups are not
algebraic. However, their internal structure is very similar to that
of the usual classical groups. They are also oftentimes called general
quadratic groups, or classical-like groups.

\subsection{}\Label{elementary1}
{\it Elementary unitary transvections\/} $T_{ij}(\xi)$
correspond to the pairs $i,j\in\Omega$ such that $i\neq j$. They come in
two stocks. Namely, if, moreover, $i\neq-j$, then for any $\xi\in A$ we set
$$ T_{ij}(\xi)=e+\xi e_{ij}-\lambda^{(\e(j)-\e(i))/2}\bar\xi e_{-j,-i}. $$
\noindent
These elements are also often called {\it elementary short root unipotents\/}.
\noindent
On the other side for $j=-i$ and $\a\in\lambda^{-(\e(i)+1)/2}\Lambda$ we set
$$ T_{i,-i}(\a)=e+\a e_{i,-i}. $$
\noindent
These elements are also often called {\it elementary long root elements\/}.
\par
Note that $\bar\Lambda=\bar\lambda\Lambda$. In fact, for any element
$\a\in\Lambda$ one has $\bar\a=-\bar\lambda\a$ and thus $\bar\Lambda$ coincides with
the set of products $\bar\lambda\a$, $\a\in\Lambda$. This means that in the
above definition $\a\in\bar\Lambda$ when $i\in\Omega^+$ and $\a\in\Lambda$
when $i\in\Omega^-$.
\par
Subgroups $X_{ij}=\{T_{ij}(\xi)\mid \xi\in A\}$, where $i\neq\pm j$, are
called {\it short root subgroups\/}. Clearly, $X_{ij}=X_{-j,-i}$.
Similarly, subgroups $X_{i,-i}=\{T_{ij}(\a)\mid
\a\in\lambda^{-(\e(i)+1)/2}\Lambda\}$ are called {\it long root subgroups\/}.
\par
The {\it elementary unitary group\/} $\EU(2n,\Form)$ is generated by
elementary unitary transvections $T_{ij}(\xi)$, $i\neq\pm j$, $\xi\in A$,
and $T_{i,-i}(\a)$, $\a\in\Lambda$, see~\cite[\S3]{BV3}.

\subsection{}\Label{elementary2}
Elementary unitary transvections $T_{ij}(\xi)$ satisfy the following
{\it elementary relations\/}, also known as {\it Steinberg relations\/}.
These relations will be used throughout this paper.
\par\smallskip
(R1) \ $T_{ij}(\xi)=T_{-j,-i}(\lambda^{(\varepsilon(j)-\varepsilon (i))/2}\bar{\xi})$,
\par\smallskip
(R2) \ $T_{ij}(\xi)T_{ij}(\zeta)=T_{ij}(\xi+\zeta)$,
\par\smallskip
(R3) \ $[T_{ij}(\xi),T_{hk}(\zeta)]=1$, where $h\ne j,-i$ and $k\ne i,-j$,
\par\smallskip
(R4) \ $[T_{ij}(\xi),T_{jh}(\zeta)]=
T_{ih}(\xi\zeta)$, where $i,h\ne\pm j$ and $i\ne\pm h$,
\par\smallskip
(R5) \ $[T_{ij}(\xi),T_{j,-i}(\zeta)]=
T_{i,-i}(\xi\zeta-\lambda^{-\varepsilon(i)}\bar{\zeta}\bar{\xi})$, where $i\ne\pm j$,
\par\smallskip
(R6) \ $[T_{i,-i}(\xi),T_{-i,j}(\zeta)]=
T_{ij}(\xi\zeta)T_{-j,j}(-\lambda^{(\ep(j)-\ep(i))/2}\bar\zeta\xi\zeta)$, where $i\ne\pm j$.
\par\smallskip
Relation (R1) coordinates two natural parametrisations of the same short
root subgroup $X_{ij}=X_{-j,-i}$. Relation (R2) expresses additivity of
the natural parametrisations. All other relations are various instances
of the Chevalley commutator formula. Namely, (R3) corresponds to the
case, where the sum of two roots is not a root, whereas (R4), and (R5)
correspond to the case of two short roots, whose sum is a short root,
and a long root, respectively. Finally, (R6) is the Chevalley commutator
formula for the case of a long root and a short root, whose sum is a root.
Observe that any two long roots are either opposite, or orthogonal, so
that their sum is never a root.


\subsection{}\Label{sub:1.4}
Let $G$ be a group. For any $x,y\in G$, ${}^xy=xyx^{-1}$ and $y^x=x^{-1}yx$
denote the left conjugate and the right conjugate of $y$ by $x$,
respectively. As usual, $[x,y]=xyx^{-1}y^{-1}$ denotes the
left-normed commutator of $x$ and $y$. Throughout the present paper
we repeatedly use the following commutator identities:
\par\smallskip
(C1) \ $[x,yz]=[x,y]\cdot {}^y[x,z]$,
\par\smallskip
(C2) \ $[xy,z]={}^x[y,z]\cdot[x,z]$,
\par\smallskip
(C3) \ ${}^x\bigl[[y,x^{-1}]^{-1},z\bigr] =
{}^y\bigl[x,[y^{-1},z]\bigr]\cdot{}^z\bigl[y,[z^{-1},x]\bigr]$,
\par\smallskip
(C4) \ $[x,{}^yz]={}^y[{}^{y^{-1}}x,z]$,
\par\smallskip
(C5) \ $[{}^yx,z]={}^{y}[x,{}^{y^{-1}}z]$.
\par\smallskip
\noindent
Especially important is (C3), the celebrated {\it Hall--Witt identity\/}.
Sometimes it is used in the following form, known as the {\it three
subgroup lemma\/}.
\begin{Lem}{\label{HW1}}
Let\/ $F,H,L\trianglelefteq G$ be three normal subgroups
of\/ $G$. Then
$$ \big[[F,H],L\big]\le \big [[F,L],H\big ]\cdot \big [F,[H,L]\big ]. $$
\end{Lem}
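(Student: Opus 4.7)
The plan is to derive the three subgroup lemma as a direct application of the Hall--Witt identity (C3), with the normality of $F$, $H$, $L$ in $G$ used repeatedly to keep outer conjugations inside the target subgroup. First I would observe that, since $F$, $H$, $L$ are all normal in $G$, the commutator subgroups $[F,L]$ and $[H,L]$ are again normal in $G$; consequently both $\bigl[[F,L], H\bigr]$ and $\bigl[F, [H,L]\bigr]$ are normal in $G$, and so is their product, which I denote by
$$ N \;=\; \bigl[[F,L], H\bigr] \cdot \bigl[F, [H,L]\bigr]. $$
This is the reduction that makes the rest of the argument a short piece of bookkeeping.

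Next I would reduce the inclusion $\bigl[[F,H], L\bigr] \le N$ to the single claim that $\bigl[[f,h], \ell\bigr] \in N$ for all $f \in F$, $h \in H$, $\ell \in L$. Indeed $[F,H]$ is generated (as a subgroup) by the elementary commutators $[f, h]$, and iterated application of (C1) and (C2) --- together with the fact that for $g \in G$ one has $[g^{-1}, \ell] = {}^{g^{-1}}[g, \ell]^{-1}$ and $[g, \ell^{-1}] = {}^{\ell^{-1}}[g, \ell]^{-1}$ --- lets me expand any $[a, \ell]$ with $a \in [F, H]$ and $\ell \in L$ into a product of $G$-conjugates of elementary triples $\bigl[[f_i, h_i], \ell_j\bigr]$. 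Once the base claim is available, normality of $N$ in $G$ finishes the reduction.

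For the base claim I would apply (C3) with $x = f$, $y = h$, $z = \ell$. Since $[y, x^{-1}]^{-1} = [x^{-1}, y]$, this reads
$$ {}^f\bigl[[f^{-1}, h], \ell\bigr] \;=\; {}^h\bigl[f, [h^{-1}, \ell]\bigr] \;\cdot\; {}^\ell\bigl[h, [\ell^{-1}, f]\bigr]. $$
The two inner commutators on the right lie in $[H, L]$ and $[F, L]$ respectively, so the outer commutators belong to $\bigl[F, [H,L]\bigr]$ and $\bigl[[F,L], H\bigr]$; both of these subgroups are normal in $G$ by the first step, so the outer conjugations by $h$ and by $\ell$ do not take them out, placing the whole right-hand side inside $N$. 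Normality of $N$ in $G$ then lets me strip the conjugation by $f$ on the left, yielding $\bigl[[f^{-1}, h], \ell\bigr] \in N$; since $f^{-1}$ ranges over $F$ as $f$ does, the base claim follows. I do not foresee any serious obstacle --- the argument is really just a careful deployment of (C3) together with normality bookkeeping, and the only subtle point is insisting that $N$ be normal in all of $G$ (not merely in one of $F$, $H$, $L$), which is exactly what enables the reduction step above.
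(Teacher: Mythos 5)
Your proof is correct, and it follows exactly the route the paper intends: the paper states Lemma~\ref{HW1} without proof as a standard consequence of the Hall--Witt identity (C3), which is precisely how you derive it. Your normality bookkeeping (that $N=[[F,L],H]\cdot[F,[H,L]]$ is normal in $G$, so that the outer conjugations in (C3) and the reduction to elementary commutators via (C1)/(C2) cause no loss) is the standard argument and is carried out correctly.
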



\section{Relative subgroups}
\label{rel}

In this section we recall definitions and basic facts concerning relative
subgroups.

\subsection{}\Label{relative} One associates with a form ideal $(I,\Gamma)$
the following four relative subgroups.
\par\smallskip
$\bullet$ The subgroup $\FU(2n,I,\Gamma)$ generated by elementary unitary
transvections of level $(I,\Gamma)$,
$$ \FU(2n,I,\Ga)=\big\langle T_{ij}(\xi)\mid \
\xi\in I\text{ if }i\neq\pm j\text{ and }
\xi\in\lambda^{-(\epsilon(i)+1)/2}\Gamma\text{ if }i=-j\big\rangle. $$
\par\smallskip
$\bullet$ The {\it relative elementary subgroup\/} $\EU(2n,I,\Gamma)$
of level $(I,\Gamma)$, defined as the normal closure of $\FU(2n,I,\Gamma)$
in $\EU(2n,\Form)$,
$$ \EU(2n,I,\Ga)={\FU(2n,I,\Ga)}^{\EU(2n,\Form)}. $$
\par\smallskip
$\bullet$ The {\it principal congruence subgroup\/} $\GU(2n,I,\Ga)$ of level
$(I,\Ga)$ in $\GU(2n,A,\Lambda)$ consists of those $g\in \GU(2n,A,\Lambda)$,
which are congruent to $e$ modulo $I$ and preserve $f(u,u)$ modulo $\Ga$,
$$ f(gu,gu)\in f(u,u)+\Ga, \qquad u\in V. $$
\par\smallskip
$\bullet$ The full congruence subgroup $\CU(2n,I,\Gamma)$ of level
$(I,\Gamma)$, defined as
$$ \CU(2n,I,\Ga)=\left \{ g\in \GU(2n,A,\Lambda)\mid
[g,\GU(2n,A,\Lambda)]\subseteq \GU(2n,I,\Ga)\right\}. $$
\par\smallskip
In some books, including \cite{HO}, the group $\CU(2n,I,\Ga)$
is defined differently. However, in many important situations
these definitions yield the same group. Starting from Lemma~6,
this is certainly the case for rings considered in the present
paper.

\subsection{}\Label{relativefacts}
Let us collect several basic facts, concerning relative groups,
which will be used in the sequel. The first one of them asserts that
the relative elementary groups are $\EU(2n,A,\Lambda)$-perfect.
\begin{Lem}\label{hww3}
Suppose either $n\ge 3$ or $n=2$ and $I=\Lambda I+I\Lambda$.
Then
$$ \EU(2n,I,\Gamma)=[\EU(2n,I,\Gamma),\EU(2n,A,\Lambda)]. $$
\end{Lem}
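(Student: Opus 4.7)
The inclusion $[\EU(2n,I,\Gamma),\EU(2n,A,\Lambda)]\subseteq\EU(2n,I,\Gamma)$ is automatic, since $\EU(2n,I,\Gamma)$ is normal in $\EU(2n,A,\Lambda)$. For the reverse inclusion, write $N=[\EU(2n,I,\Gamma),\EU(2n,A,\Lambda)]$; because both factors in this commutator are normal in $\EU(2n,A,\Lambda)$, so is $N$. Since $\EU(2n,I,\Gamma)$ is by definition the normal closure of $\FU(2n,I,\Gamma)$ in $\EU(2n,A,\Lambda)$, it suffices to prove that every elementary transvection generator of $\FU(2n,I,\Gamma)$ lies in $N$.

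For a short root generator $T_{ij}(\xi)$ with $i\ne\pm j$ and $\xi\in I$, when $n\ge 3$ I would pick an auxiliary index $k\in\Omega$ with $k\ne\pm i,\pm j$ (available since $|\Omega|=2n\ge 6$) and apply the Steinberg relation (R4):
\[ T_{ij}(\xi)=[T_{ik}(\xi),T_{kj}(1)], \]
with $T_{ik}(\xi)\in\FU(2n,I,\Gamma)\subseteq\EU(2n,I,\Gamma)$ and $T_{kj}(1)\in\EU(2n,A,\Lambda)$, so the commutator lies in $N$. For a long root generator $T_{i,-i}(\a)$ with $\a$ in the appropriate $\Gamma$-shift, I would pick any $j\ne\pm i$ and apply (R6):
\[ [T_{i,-i}(\a),T_{-i,j}(1)]=T_{ij}(\a)\cdot T_{-j,j}\bigl(-\lambda^{(\e(j)-\e(i))/2}\a\bigr). \]
The left-hand side is a commutator of an element of $\FU(2n,I,\Gamma)$ with an element of $\EU(2n,A,\Lambda)$, so lies in $N$; and the short root factor $T_{ij}(\a)$ lies in $N$ by the previous paragraph (since $\a\in\Gamma\subseteq I$). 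Hence the long root factor $T_{-j,j}(-\lambda^{(\e(j)-\e(i))/2}\a)$ lies in $N$ as well. Letting $i,j$ vary and using $\lambda\Gamma=\Gamma$ exhausts all long root generators.

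For the remaining case $n=2$ with $I=\Lambda I+I\Lambda$, the third index required for (R4) is unavailable, and short roots must be handled differently. Using the hypothesis, decompose an arbitrary $\xi\in I$ as $\xi=\sum_k(\a_k\zeta_k+\zeta_k'\a_k')$ with $\a_k,\a_k'\in\Lambda$ and $\zeta_k,\zeta_k'\in I$, and then extract each factor $T_{ij}(\a\zeta)$ from the very same (R6) identity, now applied to $T_{i,-i}(\a)\in\EU(2n,A,\Lambda)$ and $T_{-i,j}(\zeta)\in\EU(2n,I,\Gamma)$, modulo a long root correction $T_{-j,j}(\mu)$ whose parameter $\mu$ has the form $\bar\zeta\a\zeta$ (up to $\lambda$-shift) and therefore lies in $\Gamma_{\min}(I)\subseteq\Gamma$. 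These correction long roots must be independently placed in $N$ using (R5) combined with the conjugation identities from \S\ref{rel}, so that no circular dependence on the as-yet-unproved general short root claim is invoked.

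The principal obstacle is precisely this $n=2$ subcase: in the absence of a third index the short and long root generators are intertwined through (R6), and the hypothesis $I=\Lambda I+I\Lambda$ is essential both for reducing a generic $\xi\in I$ to the product form $\a\zeta$ on which (R6) can be triggered, and for guaranteeing that the resulting long-root corrections have parameters lying in the specific subclass of $\Gamma_{\min}(I)$ that can be shown to belong to $N$ without reference back to the general short root statement.
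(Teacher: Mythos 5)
The paper itself gives no proof of this lemma --- it is listed among the ``basic facts'' imported from the literature (essentially from \cite{BV3}) --- so your argument has to stand on its own. For $n\ge 3$ it does: reducing to the generators of $\FU(2n,I,\Gamma)$ via normality of $N=[\EU(2n,I,\Gamma),\EU(2n,A,\Lambda)]$ and the definition of $\EU(2n,I,\Gamma)$ as a normal closure, producing the short root generators from (R4), and then extracting the long root generators from (R6) after absorbing the short root factor is exactly the standard argument, and it is correct. One bookkeeping caveat: the phrase ``using $\lambda\Gamma=\Gamma$'' is not the right justification, and that identity is not true for a general (relative) form parameter --- already $\lambda\Lambda_{\max}=\Lambda_{\max}$ fails in general. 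What actually makes the long root step close up is that the power of $\lambda$ in (R6) is calibrated so that, as $\a$ runs over $\lambda^{-(\e(i)+1)/2}\Gamma$, the parameter of the long root factor runs over precisely the set $\lambda^{-(\e(-j)+1)/2}\Gamma$ parametrising $X_{-j,j}\cap\FU(2n,I,\Gamma)$; no extra hypothesis on $\Gamma$ is needed (and none is available).

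The $n=2$ case, by contrast, is not proved. You correctly identify the obstruction: short root generators must be manufactured from (R6) through the hypothesis $I=\Lambda I+I\Lambda$, and every such application of (R6) leaves a long root correction $T_{-j,j}(\bar\zeta\a\zeta)$ (up to a $\lambda$-power) whose membership in $N$ has to be secured first. But you then only assert that these corrections ``must be independently placed in $N$ using (R5) combined with the conjugation identities''. (R5) produces long root elements whose parameters have the hermitian shape $\xi\zeta-\lambda^{-\e(i)}\bar\zeta\bar\xi$, i.e.\ the first kind of generator of $\Gamma_{\min}(I)$, and does not reach the quadratic parameters $\bar\zeta\a\zeta$ with $\a\in\Lambda$, $\zeta\in I$; those come from (R6) itself, which is exactly the circularity you flag and do not break. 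So for the statement as written this is a genuine gap. It is, however, confined to the $n=2$ clause: the paper only ever invokes the lemma under its standing assumption $n\ge 3$, where your proof is complete.
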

The next lemma gives generators of the relative elementary subgroup
$\EU(2n,I,\Ga)$ as a subgroup. With this end, consider matrices
$$ Z_{ij}(\xi,\zeta)={}^{T_{ji}(\zeta)}T_{ij}(\xi)
=T_{ji}(\zeta)T_{ij}(\xi)T_{ji}(-\zeta), $$
\noindent
where $\xi\in I$, $\zeta\in A$, if $i\neq\pm j$, and
$\xi\in\lambda^{-(\e(i)+1)/2}\Gamma$,
$\zeta\in\lambda^{-(\e(i)+1)/2}\Lambda$, if $i=-j$.
The following result is \cite{BV3}, Proposition 5.1.
\begin{Lem}\label{genelm}
Suppose $n\ge 3$. Then
\begin{multline*}
\EU(2n,I,\Ga)=\big\langle Z_{ij}(\xi,\zeta)\mid \
\xi\in I,\zeta\in\Lambda\text{ if }i\neq\pm j\text{ and }\\
\xi\in\lambda^{-(\epsilon(i)+1)/2}\Gamma,
\zeta\in\lambda^{-(\epsilon(i)+1)/2}\Lambda,
\text{ if }i=-j\big\rangle.
\end{multline*}
\end{Lem}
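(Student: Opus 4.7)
Let $H$ denote the subgroup of $\EU(2n,\Form)$ generated by the elements $Z_{ij}(\xi,\zeta)$ listed in the statement. The plan is to prove $H = \EU(2n,I,\Ga)$. One inclusion is free: each $Z_{ij}(\xi,\zeta) = T_{ji}(\zeta)T_{ij}(\xi)T_{ji}(-\zeta)$ is the conjugate, by the elementary matrix $T_{ji}(\zeta)\in\EU(2n,\Form)$, of the level transvection $T_{ij}(\xi)\in\FU(2n,I,\Ga)$, hence lies in the normal closure $\EU(2n,I,\Ga)$. Conversely, specialising $\zeta=0$ gives $Z_{ij}(\xi,0)=T_{ij}(\xi)$, so $\FU(2n,I,\Ga)\subseteq H$. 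By the definition of $\EU(2n,I,\Ga)$ as the $\EU(2n,\Form)$-normal closure of $\FU(2n,I,\Ga)$, it now suffices to show that $H$ is normal in $\EU(2n,\Form)$, and for this it is enough to verify that conjugation by each elementary generator $T_{kl}(\eta)$ preserves $H$.

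Fix generators $Z=Z_{ij}(\xi,\zeta)$ of $H$ and $t=T_{kl}(\eta)$ of $\EU(2n,\Form)$, and expand
$$
{}^t Z \;=\; \bigl({}^tT_{ji}(\zeta)\bigr)\cdot\bigl({}^tT_{ij}(\xi)\bigr)\cdot\bigl({}^tT_{ji}(-\zeta)\bigr),
$$
using ${}^tT_{ab}(\mu)=[t,T_{ab}(\mu)]\,T_{ab}(\mu)$. The commutator factors are computed from the Steinberg relations (R3)--(R6). The argument then proceeds by case analysis on the overlap pattern of the index sets $\{\pm i,\pm j\}$ and $\{\pm k,\pm l\}$. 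In the generic overlap-free subcase governed by (R3) all commutators are trivial and $t$ passes through, re-emerging on the other side so that ${}^tZ$ is already of the required form (or a product of two such). In the remaining subcases (R4)--(R6) produce transvections of level $(I,\Ga)$: the short-root corrections lie in $T_{ph}(\eta\xi)$ with $\eta\xi\in I$, and the long-root corrections are forced into $\Ga$ by the closure $\a\Ga\bar\a\subseteq\Ga$ and $\Ga_{\min}(I)\subseteq\Ga$. Each such correction lies in $\FU(2n,I,\Ga)\subseteq H$ by the previous paragraph, and one rearranges the three factors into a product of $Z$-generators using the commutator identities (C1)--(C5).

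The main obstacle is bookkeeping in the overlap cases, in particular the ones involving long-root elements, where (R5) and (R6) introduce quadratic corrections of the form $\xi\zeta-\lambda^{-\ep(i)}\bar\zeta\bar\xi$ or $\bar\zeta\xi\zeta$. To rewrite these corrections as genuine $Z$-generators rather than bare transvections, one needs the assumption $n\ge 3$, which guarantees an auxiliary index $h$ distinct from $\pm i,\pm j,\pm k,\pm l$. Such an $h$ lets one insert factorisations like $T_{ij}(\mu)=[T_{ih}(\mu),T_{hj}(1)]$ coming from (R4) and, via the Hall--Witt identity Lemma~\ref{HW1}, redistribute the conjugation of $t$ over commutators whose entries are level transvections that are already known to be in $H$. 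The existence of such a free index is exactly the place where the stable range hypothesis $n\ge 3$ cannot be relaxed, which also explains why the analogous statement for $n=2$ requires the extra condition $I=\Lambda I+I\Lambda$ seen in Lemma~\ref{hww3}.
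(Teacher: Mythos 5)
First, a point of reference: the paper does not prove this lemma at all --- it is quoted from \cite{BV3}, Proposition~5.1, so the only proof to compare against is the one there. Your skeleton is the correct (and the standard) one, and it matches \cite{BV3}: the subgroup $H$ generated by the $Z_{ij}(\xi,\zeta)$ visibly lies in $\EU(2n,I,\Ga)$, contains $\FU(2n,I,\Ga)$ (take $\zeta=0$), and everything reduces to showing that $H$ is normalised by each elementary generator $T_{kl}(\eta)$.

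The gap is in the execution of that last step. Expanding ${}^tZ$ as $\bigl({}^tT_{ji}(\zeta)\bigr)\bigl({}^tT_{ij}(\xi)\bigr)\bigl({}^tT_{ji}(-\zeta)\bigr)$ with ${}^tT_{ab}(\mu)=[t,T_{ab}(\mu)]T_{ab}(\mu)$, only the corrections coming from the \emph{middle} factor $T_{ij}(\xi)$ have parameters in $I$ (resp.\ in $\Ga$, via $\alpha\Ga\bar\alpha\subseteq\Ga$ and $\Ga_{\min}(I)\subseteq\Ga$). The corrections from the two \emph{outer} factors $T_{ji}(\pm\zeta)$ have parameters of the shape $\eta\zeta$, $\zeta\eta$, $\bar\zeta\eta\zeta$ with $\eta,\zeta$ ranging over $A$ (or $\Lambda$); these are \emph{absolute} elementary transvections, not elements of $\FU(2n,I,\Ga)$, so your assertion that ``each such correction lies in $\FU(2n,I,\Ga)\subseteq H$'' fails precisely where the work is. If one instead writes ${}^tZ={}^{tu}v$ with $u=T_{ji}(\zeta)$, $v=T_{ij}(\xi)$, and $tu=[t,u]\,ut$, the absolute factor $[t,u]$ reappears as a \emph{new conjugating transvection} acting on a $Z$-generator --- the computation recurses rather than terminates, and the real content of the proof is to organise the cases (using an auxiliary index $h\ne\pm i,\pm j$, which is where $n\ge3$ enters) so that the recursion bottoms out in conjugators of the form $T_{ji}(\ast)$, which merely shift the second argument of $Z_{ij}(\xi,\cdot)$. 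Two smaller inaccuracies: for $n=3$ there is in general \emph{no} index outside $\{\pm i,\pm j,\pm k,\pm l\}$ (these four pairs can exhaust all six indices), so the free index you can rely on is only one avoiding $\pm i,\pm j$; and $n\ge3$ is a rank restriction, not a stable range hypothesis.
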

The following lemma was first established in~\cite{B1}, but remained
unpublished. See~\cite{HO} and~\cite{BV3}, Lemma 4.4, for published
proofs.
\begin{Lem}
The groups $\GU(2n,I,\Gamma)$ and $\CU(2n,I,\Gamma)$ are normal in
$\GU(2n,A,\Lambda)$.
\end{Lem}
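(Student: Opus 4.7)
The plan is to first prove normality of $\GU(2n,I,\Gamma)$ by verifying the two defining conditions directly, and then deduce normality of $\CU(2n,I,\Gamma)$ as a formal consequence.

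For the first assertion, fix $g\in\GU(2n,I,\Gamma)$ and $h\in\GU(2n,A,\Lambda)$, and set $g'=hgh^{-1}$. I need to check that $g'\equiv e\pmod I$ and $f(g'u,g'u)-f(u,u)\in\Gamma$ for every $u\in V$. The congruence is immediate: $g'-e=h(g-e)h^{-1}$, and since $I\unlhd A$ is two-sided and the entries of $g-e$ lie in $I$, so do the entries of $g'-e$.

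For the form condition, set $v=h^{-1}u$ and $w=(g-e)v$, so that the coordinates of $w$ lie in $I$ and $g'u=hv+hw$. Sesquilinear expansion yields
\[
f(g'u,g'u)-f(u,u)=f(hv,hw)+f(hw,hv)+f(hw,hw),
\]
while the hypothesis applied to $v$ gives $f(v,w)+f(w,v)+f(w,w)\in\Gamma$. I would bridge the two using (a) the identity $f(hx,hy)-f(x,y)=-\lambda\overline{f(hy,hx)-f(y,x)}$, which follows from the exact preservation of the $\lambda$-hermitian form $h=f+\lambda\bar f$ by $h$, and (b) the stability axiom $\alpha\Gamma\bar\alpha\subseteq\Gamma$ of the form parameter. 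The key structural point is that the coordinates of $w$ lie in $I$, so that the corrections introduced by passing from $f(v,w),f(w,v),f(w,w)$ to their $h$-conjugated analogues are, on one hand, absorbed into $\Gamma$ by (b), and, on the other, symmetric enough under (a) for the remaining cross-terms to land in $\Lambda_{\min}\subseteq\Gamma$.

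For $\CU(2n,I,\Gamma)$, one uses the identity $[hgh^{-1},k]=h[g,h^{-1}kh]h^{-1}$ for $g\in\CU(2n,I,\Gamma)$ and $h,k\in\GU(2n,A,\Lambda)$: since $h^{-1}kh\in\GU(2n,A,\Lambda)$, the inner commutator lies in $\GU(2n,I,\Gamma)$ by definition of $\CU$, and conjugation by $h$ preserves $\GU(2n,I,\Gamma)$ by the normality just proved. Hence $hgh^{-1}\in\CU(2n,I,\Gamma)$, completing the argument. The main obstacle is the form condition in the first part: the defect $f(hx,hy)-f(x,y)$ is nontrivially twisted by both the involution and the symmetry $\lambda$, and showing it harmonises with the smaller set $\Gamma$ — not merely with $\Lambda$ — uses crucially that the vectors involved are drawn from $IV$, together with the stability of $\Gamma$ under the twisted $A$-action. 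Everything else is routine bookkeeping.
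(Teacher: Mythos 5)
The paper itself offers no proof of this lemma --- it refers the reader to Bak's thesis, to \cite{HO}, and to \cite{BV3}, Lemma 4.4 --- so your argument must be judged on its own. Its architecture is sound: the deduction of normality of $\CU(2n,I,\Gamma)$ from that of $\GU(2n,I,\Gamma)$ via $[hgh^{-1},k]=h[g,h^{-1}kh]h^{-1}$ is correct and complete, and the congruence condition via $g'-e=h(g-e)h^{-1}$ is immediate as you say. The gap sits exactly where you locate the main obstacle, in the form condition, and the two containments you invoke to close it do not work as stated. First, $\Lambda_{\min}\subseteq\Gamma$ is false in general: $\Gamma\subseteq\Gamma_{\max}(I)=I\cap\Lambda$, while $\Lambda_{\min}$ need not lie in $I$ (take $A=\mathbb{Z}$ with trivial involution, $\lambda=-1$, $I=\Gamma=4\mathbb{Z}$, so $\Lambda_{\min}=2\mathbb{Z}\not\subseteq\Gamma$). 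What actually saves the cross terms is that they have the shape $\alpha-\lambda\bar\alpha$ with $\alpha\in I$ (because $w\in IV$), hence lie in the \emph{relative} $\Gamma_{\min}(I)\subseteq\Gamma$; the absolute $\Lambda_{\min}$ is the wrong object.

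Second, the axiom $\alpha\Gamma\bar\alpha\subseteq\Gamma$ is not the tool that absorbs the remaining correction. After your reduction, the problematic term is $f(hw,hw)-f(w,w)$: preservation of $q$ by $h$ only places it in $\Lambda$, and the fact that $w$ has entries in $I$ only improves this to $I\cap\Lambda=\Gamma_{\max}(I)$, which may be strictly larger than $\Gamma$. To land in $\Gamma$ one must expand $w=\sum e_i\xi_i$ with $\xi_i\in I$ and write $f(hw,hw)-f(w,w)=\sum_{i,j}\bar\xi_i c_{ij}\xi_j$, where $c_{ij}=f(he_i,he_j)-f(e_i,e_j)$. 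The diagonal terms $\bar\xi_i c_{ii}\overline{\bar\xi_i}$ have $c_{ii}\in\Lambda$ but not necessarily in $\Gamma$, so they are captured by the generators $\xi\alpha\bar\xi$ ($\xi\in I$, $\alpha\in\Lambda$) of $\Gamma_{\min}(I)$ --- not by $\alpha\Gamma\bar\alpha\subseteq\Gamma$; the off-diagonal pairs combine, using $c_{ij}=-\lambda\bar c_{ji}$ (preservation of the hermitian form) and centrality of $\lambda$, into elements $\alpha-\lambda\bar\alpha$ with $\alpha\in I$, again in $\Gamma_{\min}(I)$. With these substitutions your outline closes into a correct proof; as written, it establishes membership only in $\Gamma_{\max}(I)$ rather than in $\Gamma$.
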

The following lemma is the main result of \cite{BV2,BV3}. 
It is usually referred as the {\it absolute standard commutator formula\/}.
Its role in the present paper is two-fold. On the one hand, here we
develop a new and more powerful relative version of the conjugation calculus
and the commutator calculus, which allow, among other things, to give
a new proof of this result. In other words, the localisation proof
of Theorem~\ref{main} proceeds directly in the relative case, and does not depend
on the absolute case. On the other hand, in \S\S8,9 we show that using
level calculations one can deduce Theorem~\ref{main} directly from the absolute case.
\begin{Lem}\label{keylem}
Let $(A,\Lambda)$ be a quasi-finite form ring and
$n\ge 3$. Then for any form ideal $(I,\Gamma)$ the corresponding
elementary subgroup $\EU(2n,I,\Gamma)$ is normal in the hyperbolic
unitary group $\GU(2n,A,\Lambda)$, in other words,
$$ \EU(2n,I,\Gamma)=[\GU(2n,A,\Lambda),\EU(2n,I,\Gamma)]. $$
\noindent
Moreover,
$$ \EU(2n,I,\Gamma)=[\EU(2n,A,\Lambda),\CU(2n,I,\Gamma)]. $$
\end{Lem}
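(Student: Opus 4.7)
The plan is to establish both assertions by the localisation-and-patching method of Quillen and Suslin, specialised to the unitary setting by means of the relative conjugation calculus and the relative commutator calculus developed later in the paper. I would first prove normality of $\EU(2n,I,\Ga)$ in $\GU(2n,\Form)$, and then deduce the refined identity involving $\CU(2n,I,\Ga)$ by a three-subgroup-lemma argument based on perfection.

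For normality, by conjugation identities (C1)--(C2) and the $\EU(2n,\Form)$-normality of $\EU(2n,I,\Ga)$ built into its definition, the task reduces to showing $[g,T_{ij}(\xi)]\in\EU(2n,I,\Ga)$ for every $g\in\GU(2n,\Form)$ and every generator $T_{ij}(\xi)$ of $\FU(2n,I,\Ga)$. Only finitely many entries of $A$ and $\Lambda$ enter any such identity, so quasi-finiteness (\S\ref{quasi-finite}) permits one to replace $(\Form)$ by a module-finite $R_0$-subalgebra and thus to work over a Noetherian base. For each maximal ideal $\gm\in\Max(R_0)$ I would analyse the image $F_\gm(g)$ in the semi-local form ring $(A_\gm,\Lambda_\gm)$. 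Over a semi-local form ring the hyperbolic unitary group is, up to a small diagonal correction, generated by its elementary subgroup, so the Steinberg relations (R1)--(R6) combined with the relative conjugation calculus rewrite $F_\gm([g,T_{ij}(\xi)])$ as a product of generators $Z_{i_kj_k}(\eta_k,\zeta_k)$ of the form appearing in Lemma~\ref{genelm}, with each $\eta_k$ in the appropriate level. Clearing denominators, the same factorisation persists in $\FU(2n,A_s,\Lambda_s)$ for a suitable $s\in R_0\setminus\gm$.

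The main obstacle will be to patch these local identities over the various $A_{s_\gm}$ into one global equation inside $\EU(2n,I,\Ga)$. This is exactly what the relative commutator calculus is designed for: it supplies estimates of the shape
$$
\Big[T_{pq}\Big(\tfrac{\alpha}{s^{m}}\Big),\,T_{ij}(\xi)\Big]\in\EU(2n,I,\Ga),
$$
valid whenever $\alpha\in A$, $\xi$ lies in the appropriate level, and $m$ is sufficiently large. Since the chosen $s_\gm$ lie outside every maximal ideal collectively, they generate the unit ideal of $R_0$, so a Bezout identity $1=\sum r_\gm s_\gm^{N}$ converts the collection of local identities into the required global one. This yields the inclusion $[\GU(2n,\Form),\EU(2n,I,\Ga)]\subseteq\EU(2n,I,\Ga)$; the reverse inclusion is immediate from Lemma~\ref{hww3}, since
$$
\EU(2n,I,\Ga)=[\EU(2n,I,\Ga),\EU(2n,\Form)]\subseteq[\GU(2n,\Form),\EU(2n,I,\Ga)].
$$
Running the same localisation argument with a generator of $\GU(2n,I,\Ga)$ in place of $T_{ij}(\xi)$ yields, by the same mechanism, the strong commutator formula $[\EU(2n,\Form),\GU(2n,I,\Ga)]\subseteq\EU(2n,I,\Ga)$, which I shall use below.

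For the refined identity $\EU(2n,I,\Ga)=[\EU(2n,\Form),\CU(2n,I,\Ga)]$, the inclusion $\subseteq$ is immediate from Lemma~\ref{hww3} together with $\EU(2n,I,\Ga)\subseteq\CU(2n,I,\Ga)$, which give
$$
\EU(2n,I,\Ga)=[\EU(2n,I,\Ga),\EU(2n,\Form)]\subseteq[\CU(2n,I,\Ga),\EU(2n,\Form)].
$$
For the reverse inclusion, I would apply the three subgroup lemma (Lemma~\ref{HW1}) with $F=H=\EU(2n,\Form)$ and $L=\CU(2n,I,\Ga)$, each normal in $\GU(2n,\Form)$ by the normality statement just proved and the preceding lemma on $\CU(2n,I,\Ga)$. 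Using perfection $[\EU(2n,\Form),\EU(2n,\Form)]=\EU(2n,\Form)$ (the case $(I,\Ga)=(\Form)$ of Lemma~\ref{hww3}), one obtains
$$
[\EU(2n,\Form),\CU(2n,I,\Ga)]\subseteq\big[[\EU(2n,\Form),\CU(2n,I,\Ga)],\EU(2n,\Form)\big]\cdot\big[\EU(2n,\Form),[\EU(2n,\Form),\CU(2n,I,\Ga)]\big].
$$
By definition of $\CU(2n,I,\Ga)$, both inner brackets lie in $\GU(2n,I,\Ga)$, and each outer bracket is therefore contained in $[\EU(2n,\Form),\GU(2n,I,\Ga)]\subseteq\EU(2n,I,\Ga)$ by the strong commutator formula just established, completing the proof.
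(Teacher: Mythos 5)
The paper does not actually prove this lemma: it is imported as the main result of \cite{BV2,BV3} (the ``absolute standard commutator formula''), with only a remark that the relative calculus developed later would yield a new proof. So there is no in-paper argument to compare yours against line by line. Judged on its own, your outline reproduces the standard localisation-and-patching proof from the cited sources, and it is structurally parallel to the paper's own proof of Theorem~\ref{main} via Lemmas~\ref{Lem:08} and~\ref{Lem:09}: reduction to a module-finite algebra over a Noetherian $R_0$ by direct limits, semi-local decomposition $F_\gm(g)=\ep h$ with $h$ a small block commuting with the relevant transvection (this is where $n\ge 3$ enters), conjugation/commutator calculus to remove denominators, and a partition of unity $\sum r_\gm s_\gm^{N}=1$ to patch. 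The deduction of the second identity from the first via Lemma~\ref{HW1}, perfection of $\EU(2n,A,\Lambda)$, and the inclusion $[\EU(2n,A,\Lambda),\CU(2n,I,\Gamma)]\subseteq\GU(2n,I,\Gamma)$ is correct and is exactly the classical argument.

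Two steps are thinner than they should be. First, ``clearing denominators'' is not automatic: $F_s$ need not be injective, so after rewriting $F_s\big([g,T_{ij}(s^{N}\xi)]\big)$ as an element of $F_s\big(\EU(2n,I,\Gamma)\big)$ you must also know that both sides lie in a congruence neighbourhood $\GU(2n,s^{l}A,s^{l}\Lambda)$ on which $F_s$ \emph{is} injective (the paper's Lemma~\ref{Lem:03}); this is the real reason the Noetherian, module-finite reduction is needed, not merely that finitely many entries occur in each identity. Second, the strong formula $[\EU(2n,A,\Lambda),\GU(2n,I,\Gamma)]\subseteq\EU(2n,I,\Gamma)$, on which your three-subgroup argument rests, is not obtained by ``the same argument with a generator of $\GU(2n,I,\Gamma)$ in place of $T_{ij}(\xi)$'': there the level is carried by the non-elementary factor, so over $A_\gm$ one must invoke the \emph{relative} surjective stability $\GU(2n,I_\gm,\Gamma_\gm)=\EU(2n,I_\gm,\Gamma_\gm)\GU(2,I_\gm,\Gamma_\gm)$ and then a commutator (not merely conjugation) calculus for $[T_{ij}(\xi),\ep]$ with $\ep$ fractional and of level $(I,\Gamma)$ --- precisely the shape of the paper's Lemma~\ref{Lem:08}. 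These are gaps of detail rather than of conception; the overall plan is the right one.
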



\subsection{}\Label{sub:1.3}
The proofs in the present paper critically depend on the fact that
the functors $\GU_{2n}$ and $\EU_{2n}$ commute with direct limits.
This idea is used twice.
\par\smallskip
$\bullet$ Analysis of the quasi-finite case can be reduced to the case,
where $A$ is module finite over $R_0$, whereas $R_0$ itself is Noetherian.
Indeed, if $(\Form)$ is quasi-finite, (see \S\ref{quasi-finite}),
it is a direct limit $\varinjlim\big((A_j)_{R_j},\Lambda_j\big)$ of
an inductive system of form sub-algebras
$\big((A_j)_{R_j},\Lambda_j\big)\subseteq(A_R,\Lambda)$
such that each $A_j$ is module finite over $R_j$, $R_0\subseteq R_j$ and $R_j$
is finitely generated as an $R_0$-module. It follows that $A_j$ is finitely
generated as an $R_0$-module, see \cite[Cor.~3.8]{RH}. This reduction to module
finite algebras will be used in Lemma~\ref{Lem:08} and Theorem~\ref{main}.
\par\smallskip
$\bullet$ Analysis of any localisation can be reduced to the case of principal
localisations. Indeed, let $S$ be a multiplicative system in a commutative
ring $R$. Then $R_s$, $s\in S$, is an inductive system with respect to the
localisation maps $F_t:R_s\to R_{st}$. Thus, for any functor $\mathcal F$
commuting with direct limits one has ${\mathcal F}(S^{-1}R)=\varinjlim{\mathcal F}(R_s)$.
\par\smallskip
The following crucial lemma relies on both of these reductions. In fact,
starting from the next section, we will be mostly working in the principal
localisation $A_t$. However, eventually we shall have to return to the algebra
$A$ itself. In general, localisation homomorphism $F_S$ is not injective,
so we cannot pull elements of $\GU(2n,S^{-1}A,S^{-1}\Lambda)$ back to $\GU(2n,A,\Lambda)$.
However, over a {\it Noetherian} ring, {\it principal\/} localisation homomorphims
$F_t$ are indeed injective on small $t$-adic neighbourhoods of identity!

\begin{Lem}\Label{Lem:03}
Let $R$ be a commutative Noetherian ring and let $A$ be a module finite
$R$-algebra. Then for any $t\in R$ there exists a positive integer $l$
such that restriction
$$ F_t:\GU(2n,t^lA,t^l\Lambda)\to\GU(2n,A_t,\Lambda_t), $$
\noindent
of the localisation map to the principal congruence subgroup of level
$(t^lA,t^l\Lambda)$ is injective.
\end{Lem}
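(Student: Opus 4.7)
The proof splits naturally into a purely module-theoretic fact about the kernel of the localisation map $F_t\colon A\to A_t$, followed by a direct entry-by-entry translation into a statement about the principal congruence subgroup. Nothing about the hermitian form, the involution, or the form parameter is actually used beyond the tautology that a congruence element has entries in $t^lA$, so I essentially only need to understand $\ker F_t$ and intersect it with $t^lA$.

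The first step is to control $\ker F_t$. Since $A$ is module finite over the Noetherian ring $R$, it is Noetherian as an $R$-module, so the ascending chain
$$ \operatorname{Ann}_A(t)\subseteq\operatorname{Ann}_A(t^2)\subseteq\operatorname{Ann}_A(t^3)\subseteq\cdots $$
stabilises at some index $N$, meaning $\operatorname{Ann}_A(t^{N+k})=\operatorname{Ann}_A(t^N)$ for every $k\ge 0$. Because the kernel of a principal localisation is the union of these annihilators, one has $\ker F_t=\operatorname{Ann}_A(t^N)$. In other words, the full $t$-torsion of $A$ is already annihilated by the single power $t^N$.

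The second step is to choose $l=N$ (any $l\ge N$ will do) and verify injectivity on the congruence subgroup. Let $g\in\GU(2n,t^lA,t^l\Lambda)$ satisfy $F_t(g)=e$. Each entry $g_{ij}-\delta_{ij}$ lies in $t^lA$, so I may write $g_{ij}-\delta_{ij}=t^la_{ij}$ with $a_{ij}\in A$; it also lies in $\ker F_t=\operatorname{Ann}_A(t^N)$, so $t^{l+N}a_{ij}=0$. By stabilisation $\operatorname{Ann}_A(t^{l+N})=\operatorname{Ann}_A(t^N)$, hence $t^Na_{ij}=0$, and since $l\ge N$ this forces $t^la_{ij}=0$. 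Therefore $g_{ij}=\delta_{ij}$ for all $i,j$, and $g=e$.

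The only substantive content is the stabilisation of annihilator chains in the first step; the rest is bookkeeping. The obstacle to relaxing the hypotheses is precisely this stabilisation: without module-finiteness of $A$ over a Noetherian ring, the $t$-torsion of $A$ need not be bounded, and then $t^lA\cap\ker F_t$ is nonzero for every $l$, so injectivity of $F_t$ on the congruence subgroup genuinely fails and one has to pass to a direct-limit argument instead (which is exactly why the reduction to module-finite subalgebras explained in \S\ref{sub:1.3} is so important for the later applications).
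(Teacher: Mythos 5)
Your proof is correct and follows essentially the same route as the paper: the paper reduces the lemma to the injectivity of $F_t$ on $t^lA$ and cites this as Lemma~4.10 of Bak or Lemma~5.1 of Hazrat--Vavilov, while you supply the standard proof of that cited fact directly, via stabilisation of the annihilator chain $\operatorname{Ann}_A(t)\subseteq\operatorname{Ann}_A(t^2)\subseteq\cdots$ in the Noetherian $R$-module $A$. The entry-by-entry bookkeeping and the choice $l=N$ are exactly as in the references, so this is a correct, self-contained version of the paper's argument rather than a different one.
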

\begin{proof} Follows from the injectivity of the localisation map
$F_t:t^lA\rightarrow A_t$, see \cite[Lem\-ma~4.10]{B4} or \cite[Lemma~5.1]{RN1}.
\end{proof}


\section{Conjugation Calculus}
\label{ghg}

In the present section we develop a relative version of unitary conjugation
calculus. Throughout this section, we assume that $n\ge 3$, that $(A,\Lambda)$
is a form
ring over a commutative ring $R$ with involution, that $R_0$ is the subring
of $R$, generated by $a\bar a$, where $a\in R$, as in \S\ref{quasi-finite},
and, finally, that $(I,\Gamma)$ and $(J,\Delta)$ are two form ideals of
$(\Form)$.
\par
Clearly, for any $t\neq 0\in R_0$ and any given positive integer $l$, the
set $t^lA$ is in fact an ideal of the algebra $A$. Similarly, it is
straightforward to verify that
$t^l\Lambda=\{t^l\alpha\mid \alpha\in\Lambda\}$
is in fact relative form parameter for $t^lA$, and, thus, $(t^lA,t^l\Lambda)$
is a form ideal.
\par
By the same token, any form ideal $(I,\Gamma)$ gives rise to the form
ideal $(t^lI, t^l\Gamma)$. In particular, we have the corresponding
groups $\FU(2n,t^lA,t^l\Lambda)$ and $\FU(2n,I,\Gamma)$.
\par
Starting from Lemma~\ref{Lem:cong} up to Lemma~\ref{Lem:08}, all calculations
actually take place inside the elementary group $\EU(2n,A_t,\Lambda_t)$, for
some $t\in R_0$. Thus, when we write something like $\FU^1(2n,t^lI,t^l\Gamma)$
or $T_{ij}(t^l\alpha)$ what we {\it really\/} mean is
$F_t\Big(\FU^1(2n,t^lI,t^l\Gamma)\Big)$ or
$T_{ij}\big(F_t(t^l\alpha)\big)$, respectively.
\par
The overall intention of what we are doing in this section, and the next
one, is to perfect the art of getting rid of denominators. We consider
conjugates ${}^xy$ or commutators $[x,y]$, where $x$ may be fractional in
$t$, whereas $y$ is at our disposal. We wish to show that for a given $y$
and any $x$ from a {\it very\/} small $t$-adic neighbourhood of 1 the
elements ${}^xy$ and $[x,y]$ still fall in a reasonably small $t$-adic
neighbourhood of 1. Actually, we aim at such neighbourhood, where
$F_t$ is injective, as in Lemma~\ref{Lem:03}.
\par
For the group $\EU(2n,A_t,\Lambda_t)$ itself, such calculations have
been performed before in the Doktorarbeit of the first author
\cite{RH,RH2}, and have been later used by ourselves, Anthony Bak,
Victor Petrov, and others \cite{BRN,P1,petrov2,petrov3,ZZ}.
\par
What we want to do now, is to develop similar techniques inside the
relative group $\EU(2t,I_t,\Gamma_t)$, where $(I,\Gamma)$ is a form ideal
of the form algebra $(A,\Lambda)$. However, a direct imitation of the
existing proof leads to awkward and unwieldy calculations.
\par
Before, one always carried such calculations in the familiar neighbourhoods
of 1, namely in $\FU(2n,t^lI,t^l\Ga)$ or in $\EU(2n,t^lI,t^l\Ga)$.
However, as it turns out, the first one of them is a bit too small, whereas
the second one is a bit too large. A major new technical point of the
present paper, suggested by the method of our paper \cite{RZ}, is that
calculations become much less cumbersome if one works inside the
subgroup
$$ \FU(2n,t^lI,t^l\Ga)\le\FUnT{l}{I}\le \EU(2n,t^lI,t^l\Ga), $$
\noindent
instead.
\par
By definition, it is the normal closure of $\FUnt{l}{I}$ in $\FFUnt{l}$,
$$ \FUnT{l}{I}={}^{\FFUnt{l}}{\FUnt{l}{I}}\unlhd\FFUnt{l}. $$
\noindent
Normality of $\FUnT{l}{I}$ in $\FFUnt{l}$ will be repeatedly used in
the sequel. Notice, that $\EUnT{l}{A}=\EUnt{l}{A}$.
\par
Let us introduce a further piece of notation. For a form ideal $(I,\Gamma)$
and an element $t\in R_0$, the set $\FU^1\Big(2n,\frac{I}{t^m},\frac{\Gamma}{t^m}\Big)$
consists of elementary unitary transvections $T_{ij}(a)$, such that
$a\in\frac{I}{t^m}$ if $i\neq\pm j$ and $a\in\lambda^{\ep(i)+1)/2}\frac{\Gamma}{t^m}$
if $i=-j$. The set $\FU^1(2n,t^mI,t^m\Gamma)$ is defined similarly. By  $\FU^K(2n,t^mI,t^m\Gamma)$, we mean a product of $K$ (or fewer) elements of  $\FU^1(2n,t^mI,t^m\Gamma)$. 
\par
The following result is based on an induction. As everyone knows, a
journey of a thousand miles starts with the first step, which is usually also the
hardest one. In this case it certainly is.

\begin{Lem}\Label{Lem:cong}
For any given $l,m$ there exists a sufficiently large integer $p$ such that
$$ {}^{\FU^1\big(2n,\frac{A}{t^m},\frac{\Lambda}{t^m}\big)}
\FU^1(2n,t^{4p}I,t^{4p}\Gamma)\subseteq \FUnT{l}{I}. $$
\end{Lem}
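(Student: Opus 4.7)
The plan is to handle this base-of-induction lemma by direct case analysis on the Steinberg relations. Fix single generators $X = T_{hk}(b/t^m) \in \FU^1(2n,A/t^m,\Lambda/t^m)$ and $Y = T_{ij}(t^{4p}\xi) \in \FU^1(2n,t^{4p}I,t^{4p}\Gamma)$, and expand
$$ {}^X Y \;=\; Y \cdot [X,Y]. $$
As soon as $4p \ge l$, the factor $Y$ already lies in $\FU(2n,t^lI,t^l\Gamma) \subseteq \FUnT{l}{I}$, so the task reduces to placing the commutator $[X,Y]$ into $\FUnT{l}{I}$.

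Next, I would decompose $[X,Y]$ via the Steinberg relations (R1)--(R6) applied to the quadruple of indices $(h,k,i,j)$. Using (R1) to identify $T_{ij}(\cdot) = T_{-j,-i}(\cdot)$ cuts the case analysis roughly in half. Relation (R3) dispatches every non-interacting configuration with $[X,Y]=1$. In each remaining case, (R4), (R5), or (R6) produces a product of at most two elementary transvections whose arguments have the shape $t^{4p - a m}\, P(b,\xi)$, where $a \in \{1, 2\}$ counts the $1/t^m$ factors entering the Chevalley-type expression and $P$ is a short monomial in $b, \bar b, \xi, \bar\xi$. Choosing $p$ with $4p \ge l + 2m$ forces every $t$-exponent in the output to be at least $l$. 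For the coefficients: short-root outputs (in (R4), and the short part of (R6)) are products like $b\xi$ or $\xi b$ and lie in $I$ by the two-sided ideal property; the long-root output of (R5), namely $b\xi - \lambda^{-\e(i)}\bar\xi\bar b$, is rewritten via $\bar\lambda = \lambda^{-1}$ into the form $\pm(\eta - \lambda\bar\eta)$ with $\eta = b\xi \in I$, thus placing it in $\Gamma_{\min}(I) \subseteq \Gamma$; and the long-root output of (R6), built from the triple $\bar\zeta\xi_0\zeta$, lies in either $\bar b\,\Gamma\,b \subseteq \Gamma$ (denominator on the short side, using $\alpha\Gamma\bar\alpha \subseteq \Gamma$) or $\bar\xi\,\Lambda\,\xi \subseteq \Gamma_{\min}(I) \subseteq \Gamma$ (denominator on the long side). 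Each resulting transvection then lies in $\FU(2n,t^lI,t^l\Gamma)$, hence in $\FUnT{l}{I}$.

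The one situation not covered directly by (R3)--(R6) is that of two opposite long-root generators, $X = T_{h,-h}(\cdot)$ and $Y = T_{-h,h}(\cdot)$, which generate an $SU_2$-type subgroup. I would handle it using the hypothesis $n \ge 3$ to pick an auxiliary index $j \ne \pm h$, decomposing $X = [T_{hj}(b_1/t^{m_1}),\, T_{j,-h}(b_2/t^{m_2})]$ via (R5), and then applying the Hall--Witt identity (C3) to rewrite $[X,Y]$ as a product of commutators each involving at least one short-root factor, to which the previous paragraph applies. The principal obstacle is the coefficient bookkeeping for the long-root outputs: verifying that expressions like $b\xi - \lambda^{-\e(i)}\bar\xi\bar b$ or $\bar b \xi b$ land in the prescribed relative form parameter $\Gamma$, and not merely in $\Lambda \cap I$; this is precisely where the precise definition of $\Gamma_{\min}(I)$ and the form-parameter closure axiom $\alpha\Gamma\bar\alpha \subseteq \Gamma$ are used in an essential way, and it is the careful combinatorics of these checks that the authors appear to describe as having been ``cut in half''.
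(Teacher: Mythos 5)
Your overall strategy --- pass to the commutator, run a case analysis over the Steinberg relations with (R1) cutting the cases in half and (R3) disposing of the non-interacting configurations, and track the $t$-exponents and the target of the coefficients ($I$ versus the relative form parameter $\Gamma$) --- is the same as the paper's, and your bookkeeping for the (R4)--(R6) outputs is essentially right. But there are two genuine gaps in the exceptional cases. First, you have missed the case of opposite \emph{short} roots: the configuration $X=T_{ij}(b/t^m)$, $Y=T_{ji}(t^{4p}\xi)$ with $i\ne\pm j$ (Case I with $i=-h$, $j=-k$, after applying (R1)) is not covered by any of (R3)--(R6), yet your write-up names opposite long roots as the only configuration needing special treatment. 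The paper handles it by choosing an auxiliary index $q\ne\pm i,\pm j$ (this is where $n\ge 3$ enters), writing $T_{ji}(t^{4p}\xi)=[T_{jq}(t^{2p}),T_{qi}(t^{2p}\xi)]$, distributing the conjugation by $X$ over this commutator via (R4), and observing that the result is a commutator of an element of $\FFUnt{2p-m}$ with an element of $\FU(2n,t^{2p-m}I,t^{2p-m}\Gamma)$, hence lies in $\FUnT{2p-m}{I}$ by the normal-closure definition of that group.

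Second, your treatment of opposite long roots would fail as stated: you propose to decompose the \emph{conjugating} element $X=T_{h,-h}(b/t^m)$ as a single (R5)-commutator of short-root transvections. The long-root arguments produced by (R5) lie in (a unit multiple of) $\Lambda_{\min}$, whereas $b$ ranges over all of $\Lambda$; when $\Lambda\ne\Lambda_{\min}$ no such decomposition of $X$ exists. The paper instead decomposes the \emph{conjugated} element $T_{-i,i}(t^{4p}\alpha)$ using (R6), so that the long-root factor $T_{q,-q}(t^{2p}\alpha)$ retained in the decomposition still carries the arbitrary $\alpha\in\Gamma$; one then simply distributes ${}^{X}(\cdot)$ over the product and the commutator, invokes Case II for each piece, and uses normality of $\FUnT{l}{I}$ in $\FFUnt{l}$. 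No Hall--Witt identity is needed in this lemma (it only enters the commutator calculus of the next section). A minor slip worth fixing as well: ${}^{X}Y=[X,Y]\cdot Y=Y\cdot[Y^{-1},X]$, not $Y\cdot[X,Y]$; this is harmless for the containment argument but should be corrected.
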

\begin{proof}
Suppose that
$$ g= {}^{T_{ij}(a/t^m)}T_{hk}(t^{4p}\alpha)\in
{}^{\FU^1\big(2n,\frac{A}{t^m},\frac{\Lambda}{t^m}\big)}
\FU^1\big(2n,t^{4p}I,t^{4p}\Gamma\big). $$
\noindent
The proof is divided into four cases depending on whether the root
elements $T_{ij}(a/t^m)$ and $T_{hk}(t^{4p}\alpha)$ are short or long.
\par\medskip
\noindent Case I: Both $T_{hk}(t^{4p}\alpha)$ and $T_{ij}(a/t^m)$ are
short root elements, in other words $h\ne\pm k$, $i\ne\pm j$,
and, as above, $\alpha\in I$ and $a\in A$.
\par
The proof breaks into four subcases:
\par\smallskip
(1) $i\ne k$ and $j\ne h$;
\par\smallskip
(2) $i=k$ and $j\ne h$;
\par\smallskip
(3) $i\ne k$ and $j=h$;
\par\smallskip
(4) $i=k$ and $j=h$.
\par\smallskip
We shall prove subcases (1) and (2) and leave it to the reader to reduce subcases
(3)--(4) to subcase (1). In subcase (1), we have further four subcases.
\par\medskip
(i) $i\ne-h$ and $j\ne-k$. Then $T_{hk}(t^{4p}\alpha)$ commutes with
$T_{ij}(a/t^m)$ by Identity (R3). Therefore, $\rho=T_{hk}(t^{4p}\alpha)$
and we are done.
\par\medskip
(ii) $i=-h$ and $j\ne-k$. In this subcase,
$g={}^{T_{ij}(a/t^m)}T_{-ik}(t^{4p}\alpha)$.
\par
If $j=k$, then using (R5) we get
\begin{multline*}
g={}^{T_{ij}(a/t^m)}T_{-i,j}(t^{4p}\alpha)=
T_{-i,j}(t^{4p}\alpha)[T_{-i,j}(-t^{4p}\alpha),{T_{i,j}(a/t^m)}]\\
=T_{-i,j}(t^{4p}\alpha) T_{-j,j}(-\lambda^{(\ep(j)-\ep(-i))/2}
\bar\alpha a t^{4p-m}+\lambda^{(\ep(j)-\ep(i))/2 } \bar a \alpha t^{4p-m})\in\\
\FUnT{4p-m}{I}.
\end{multline*}
\par
If $i\ne k$, then using (R4) we get
\begin{multline*}
g={}^{T_{ij}(a/t^m)}T_{-i,k}(t^{4p}\alpha)
=T_{-i,k}(t^{4p}\alpha)[T_{-i,k}(-t^{4p}\alpha),{T_{i,j}(a/t^m)}]\\
=T_{-i,j}(t^{4p}\alpha) T_{-k,j}(-\lambda^{(\ep(j)-\ep(-i))/2}\bar\alpha at^{4p-m})\in\\
\FUnT{4p-m}{I}.
\end{multline*}

\par\medskip
(iii) $i\ne-h$ and $j=-k$. In this subcase,
$$ g={}^{T_{ij}(a/t^m)}T_{h,-j}(t^{4p}\alpha). $$
\noindent
If $i=h$ then using (R5) we get
\begin{multline*}
g={}^{T_{ij}(a/t^m)}T_{i,-j}(t^{4p}\alpha)
=T_{i,-j}(t^{4p}\alpha)[T_{i,-j}(-t^{4p}\alpha),{T_{i,j}(a/t^m)}]\\
=T_{i,-j}(t^{4p}\alpha) T_{i,-i}(-\lambda^{(\ep(j)-\ep(i))/2}
\bar\alpha a t^{4p-m}+\lambda^{(\ep(-j)-\ep(i))/2 } \bar a \alpha t^{4p-m})\in\\
\FUnT{4p-m}{I}.
\end{multline*}
\par
If $i\ne h$ then using (R4) we get
\begin{multline*}
g={}^{T_{ij}(a/t^m)}T_{h,-j}(t^{4p}\alpha)
=T_{h,-j}(t^{4p}\alpha)[T_{h,-j}(-t^{4p}\alpha),{T_{ij}(a/t^m)}]\\
=T_{h,-j}(t^{4p}\alpha)T_{h,-i}(-\lambda^{(\ep(j)-\ep(i))/2}\bar\alpha a t^{4p-m})\in\\
\FUnT{4p-m}{I}.
\end{multline*}

\par\medskip
(iv) $i=-h$ and $j=-k$. In this subcase,
$\displaystyle g={}^{T_{ij}(a/t^m)}T_{-i,-j}(t^{4p}\alpha)$. By (R1),
$$ g={}^{T_{ij}(a/t^m)}T_{ji}(\lambda^{(\ep(-i)-\ep(-j))/2}t^{4p}\alpha). $$
\noindent
To simplify notation, we denote $\lambda^{(\ep(-i)-\ep(-j))/2}\alpha$ by $\alpha$.
\par
Take an index $q\neq\pm i,\pm j$. Then,
\begin{multline*}
g={}^{T_{ij}(a/t^m)}T_{ji}(t^{4p}\alpha)=
{}^{T_{ij}(a/t^m)}[T_{jq}(t^{2p}),T_{qi}(t^{2p}\alpha)]=\\
=[^{T_{ij}(a/t^m)}T_{jq}(t^{2p}),^{T_{i,j}(a/t^m)}T_{qi}(t^{2p}\alpha)]=\\
=\Bigl[T_{iq} (t^{2p-m}a)T_{jq}(t^{2p}),
T_{qi}(t^{2p}\alpha)T_{qj}(-t^{2p-m}\alpha a)\Bigr].
\end{multline*}
\noindent
Denote the first and the second factors on the right hand side by $x$ and
$y$ respectively. Clearly,
$$ y\in\FU(2n,t^{2p-m}I,t^{2p-m}\Gamma)\qquad\text{and}\qquad
x\in\FU(2n,t^{2p-m}A,t^{2p-m}\Lambda), $$
\noindent
and thus
$$ [x,y]\in\FUnT{2p-m}{I}. $$
\noindent
Now, taking any $p\ge(l+m)/2$ we see that $g\in\FUnT{l}{I}$. This
finishes the proof of subcase.
\par\medskip
In subcase (2), we have
$g={}^{T_{ij}(a/t^m)}T_{hi}(t^{4p}\alpha)
={}^{T_{ij}(a/t^m)}T_{-i,-h}(\lambda^{(\ep(i)-\ep(h))/2}t^{4p}\alpha).$
\noindent
It follows by subcase (1)(ii) that $g\in\FUnT{l}{I}$ for some suitable $p$.
\par
Subcases (3) and (4) can be reduced to subcase (1) in a similar fashion.
\par\medskip
\noindent
Case II: $T_{hk}(t^{4p}\alpha)$ is a short root element and
$T_{ij}(a/t^m)$ is a long root element, i.e., $i=-j$, $h\ne\pm k$,
$\alpha\in I$ and  $\displaystyle a/t^m \in \frac{\Lambda}{t^m}$.
This case is handled by dividing into three subcases:
\par\smallskip
(1) $h\ne-i$ and $k\ne i$. By (R3), $T_{hk}(t^{4p}\alpha)$
commutes with $T_{i,-i}(a/t^m)$. Therefore, $g =T_{hk}(t^{4p}\alpha)$
and we are done.
\par\smallskip
(2) $h=-i$ and $k\ne i$. By (R6) we have
\begin{multline*}
g={}^{T_{i,-i}(a/t^m)}T_{-i,k}(t^{4p}\alpha)=
T_{-i,k}(t^{4p}\alpha)[T_{-i,k}(-t^{4p}\alpha),{T_{i,-i}(a/t^m)}]\\
=T_{-i,k}(t^{4p}\alpha)
T_{-k,k}(\lambda^{(\ep(k)-\ep(-i))/2}t^{8p-m}\bar\alpha a\alpha)
T_{i,k}(t^{4p-m}a\alpha)\in\\
\FUnT{4p-m}{I}.
\end{multline*}
\par\smallskip
(3) $h\ne-i$ and $k=i$. Our claim follows from an argument similar
to that used in subcase (2).
\par\medskip
\noindent
Case III: $T_{hk}(t^{4p}\alpha)$ is a long root element and
$T_{ij}(a/t^m)$ is a short root element. Namely, $i\ne\pm j$, $h=-k$,
$\alpha\in\Gamma$ and $\displaystyle a\in A$. This case is treated by
dividing into three subcases:
\par\smallskip
(1) $i\neq-h$ and $j\ne h$. By (R3), $T_{h,-h}(t^{4p}\alpha)$ commutes
with $T_{ij}(a/t^m)$. Therefore, $g=T_{h,-h}(t^{4p}\alpha)$ and we are
done.
\par\smallskip
(2) $i=-h$ and $j\ne h$. By (R6) we have
\begin{multline*}
g={}^{T_{i,j}(a/t^m)}T_{-i,i}(t^{4p}\alpha)
=T_{-i,i}(t^{4p}\alpha)[T_{-i,i}(-t^{4p}\alpha),{T_{i,j}(a/t^m)}]=\\
=T_{-i,i}(t^{4p}\alpha) T_{-i,j}(-t^{4p-m}\alpha a)
T_{-j,j}(\lambda^{(\ep(j)-\ep(i) )/2}t^{4p-2m}\bar a\alpha a)\in\\
\FUnT{4p-2m}{I}.
\end{multline*}
\par\smallskip
(3) $i\ne-h$ and $j=h$. It follows from an argument similar to that
used in subcase (2).
\par\medskip
\noindent
Case IV: Both $T_{hk}(t^{4p}\alpha)$ and $T_{ij}(a/t^m)$ are
long root elements. Namely, $i=-j$, $h=-k$, $\alpha\in\Gamma$ and
$\displaystyle a/t^m\in\frac{\Lambda}{t^m}$. This case is handled by
dividing into further two subcases:
\par\smallskip
(1)  $i\ne-h$. By (R3), $T_{h,-h}(t^{4p}\alpha)$ commutes with
$T_{i,-i}(a/t^m)$. Therefore, $g=T_{h,-h}(t^{4p}\alpha)$ and we are
done.
\par\smallskip
(2) $i=-h$. Pick an $q\ne\pm i$. Without loss of generality, we
may assume that $\ep(q)=\ep(-i)$. Then by (R6) we have
\begin{multline*}
g={}^{T_{i,-i}(a/t^m)}T_{-i,i}(t^{4p}\alpha)
={}^{T_{i,-i}(a/t^m)}\Bigl(T_{q,i}(t^{3p-m}\alpha)
[T_{-q,i}(t^{p}),T_{q,-q}(t^{2p}\alpha)]\Bigr)\\
=\Bigl({}^{T_{i,-i}(a/t^m)}  T_{q,i}(t^{3p-m} \alpha)\Bigr)
\Bigl[ {}^{{T_{i,-i}(a/t^m)} } T_{-q,i}(t^{p}),
{}^{T_{i,-i}(a/t^m)} T_{q,-q}(t^{2p}\alpha)\Bigr].
\end{multline*}
\noindent
Now, ${}^{T_{i,-i}(a/t^m)}T_{q,-q}(t^{2p}\alpha)$ is trivial by (R3).
By Case II, there is a sufficiently large $p$ such that
$$ {}^{T_{i,-i}(a/t^m)}T_{q,i}(t^{3p-m}\alpha)\in\FUnT{l}{I}, $$
\noindent
and
$$ {}^{{T_{i,-i}(a/t^m)}}T_{-q,i}(t^{p})\in\FFUnt{l}. $$
\noindent
By definition, $\FUnT{l}{I}$ is normalized by $\FFUnt{l}$. Hence, there is
a sufficiently large $p$ such that $g\in\FUnT{l}{I}$. This finishes
the proof of Case IV, hence the whole proof.
\end{proof}

The next lemma immediately follows from Lemma~\ref{Lem:cong} by induction.
\begin{Lem}\Label{Lem:cong2}
For any given $m,l$ there exists a sufficiently large $p$ such that
$$ {}^{\FU^1\big(2n,\frac{A}{t^m},\frac{\Lambda}{t^m}\big)}\FUnt{p}{I}\le\FUnT{l}{I}. $$
\end{Lem}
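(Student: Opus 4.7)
The plan is to bootstrap Lemma~\ref{Lem:cong}, which already handles conjugation of a single elementary transvection, up to arbitrary products of such transvections by exploiting the identity ${}^x(yz)={}^xy\cdot{}^xz$ together with the fact that $\FUnT{l}{I}$ is a group. In other words, since the substantive case analysis is complete, the only thing left to do is ``packaging'' so that the right-hand slot of the conjugation can be filled with an arbitrary element of the full group $\FUnt{p}{I}$ rather than just a single generator.

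Concretely, given $m$ and $l$, I would first invoke Lemma~\ref{Lem:cong} to produce an integer $p_{0}$ with
$$ {}^{\FU^{1}(2n,A/t^{m},\Lambda/t^{m})}\FU^{1}(2n,t^{4p_{0}}I,t^{4p_{0}}\Gamma)\subseteq\FUnT{l}{I}, $$
and set $p=4p_{0}$. Any $y\in\FUnt{p}{I}$ can by definition be written as a product $y=y_{1}y_{2}\cdots y_{k}$ of generators $y_{i}\in\FU^{1}(2n,t^{p}I,t^{p}\Gamma)$. For any $x\in\FU^{1}(2n,A/t^{m},\Lambda/t^{m})$, distributing conjugation over the product gives
$$ {}^{x}y={}^{x}y_{1}\cdot{}^{x}y_{2}\cdots{}^{x}y_{k}, $$
and every factor lies in $\FUnT{l}{I}$ by the choice of $p$ and Lemma~\ref{Lem:cong}. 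Since $\FUnT{l}{I}$ is the normal closure of $\FUnt{l}{I}$ in $\FFUnt{l}$, it is in particular a subgroup, so the product of these factors still lies in $\FUnT{l}{I}$.

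Formally this is an induction on the word length $k$: the base case $k=1$ is exactly Lemma~\ref{Lem:cong}, and the inductive step uses ${}^{x}(y_{1}\cdots y_{k})={}^{x}y_{1}\cdot{}^{x}(y_{2}\cdots y_{k})$ combined with closure of $\FUnT{l}{I}$ under multiplication. There is no essential obstacle; the only subtlety worth pausing over is that $\FUnT{l}{I}$, being a normal closure rather than merely a set of generators, really is closed under multiplication, a feature encoded in its definition and highlighted in the discussion preceding Lemma~\ref{Lem:cong}. This is precisely the reason the authors introduced this intermediate group in the first place.
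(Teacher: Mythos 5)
Your argument is correct and matches the paper exactly: the authors state that Lemma~\ref{Lem:cong2} ``immediately follows from Lemma~\ref{Lem:cong} by induction,'' which is precisely your induction on word length using ${}^x(yz)={}^xy\cdot{}^xz$ and the fact that $\FUnT{l}{I}$ is a subgroup. The choice $p=4p_0$ correctly converts the $t^{4p}$ normalisation in Lemma~\ref{Lem:cong} into the form needed here.
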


For further applications we need a stronger fact with $\FUnt{p}{I}$
on the left hand side replaced by its normal closure $\FUnT{p}{I}$
in $\EUnt{p}{A}$.
\begin{Lem}\Label{Lem:cong3}
For any given $m,l$ there exists a sufficiently large $p$ such that
$$ {}^{\FU^1\big(2n,\frac{A}{t^m},\frac{\Lambda}{t^m}\big)}\FUnT{p}{I}\le\FUnT{l}{I}. $$
\end{Lem}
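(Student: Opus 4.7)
The plan is to reduce the statement to two parallel applications of Lemma~\ref{Lem:cong2}, exploiting the fact that the intermediate subgroup $\FUnT{l}{I}$ has been designed precisely so as to be normal in $\FFUnt{l}$. By construction, $\FUnT{p}{I}={}^{\FFUnt{p}}\FUnt{p}{I}$, so every element of $\FUnT{p}{I}$ is a product of conjugates of the form ${}^{w}h$ with $w\in\FFUnt{p}$ and $h\in\FUnt{p}{I}$. Since $\FUnT{l}{I}$ is itself a subgroup, it suffices to prove that for any $z\in\EUnm{A}{1}$ and any such generator ${}^{w}h$, the element ${}^{z}({}^{w}h)$ lies in $\FUnT{l}{I}$.

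The decisive manipulation is the trivial identity
$$ {}^{z}({}^{w}h) = {}^{({}^{z}w)}({}^{z}h), $$
which separates the effect of conjugation by $z$ into its effect on the inner element $h$ and its effect on the conjugating element $w$. I would then invoke Lemma~\ref{Lem:cong2} twice. Applied to the form ideal $(I,\Gamma)$ as written, it furnishes an integer $p_{1}$ such that ${}^{z}h\in\FUnT{l}{I}$ whenever $p\ge p_{1}$. Applied instead with $(A,\Lambda)$ in place of $(I,\Gamma)$---legitimate because the proofs of Lemmas~\ref{Lem:cong} and~\ref{Lem:cong2} made no use of any special feature of $(I,\Gamma)$---and observing that under this substitution the target group is nothing but $\FFUnt{l}$, it furnishes an integer $p_{2}$ such that ${}^{z}w\in\FFUnt{l}$ whenever $p\ge p_{2}$.

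Combining these, for $p\ge\max(p_{1},p_{2})$ the element ${}^{z}({}^{w}h)={}^{({}^{z}w)}({}^{z}h)$ is a conjugate of an element of $\FUnT{l}{I}$ by an element of $\FFUnt{l}$, and hence lies back in $\FUnT{l}{I}$ by the built-in normality $\FUnT{l}{I}\unlhd\FFUnt{l}$. This would complete the argument. I do not expect a genuine technical obstacle: the whole purpose of squeezing $\FUnT{l}{I}$ in between $\FU(2n,t^{l}I,t^{l}\Gamma)$ and $\EU(2n,t^{l}I,t^{l}\Gamma)$ is that, unlike the latter whose generators $Z_{ij}(\xi,\zeta)$ would force a delicate second induction, this intermediate subgroup is tailor-made so that a one-step bookkeeping with the identity above closes the gap between Lemma~\ref{Lem:cong2} and the present statement.
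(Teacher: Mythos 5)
Your argument is correct and coincides with the paper's own proof: the paper likewise unfolds $\FUnT{p}{I}$ as ${}^{\FFUnt{p}}\FUnt{p}{I}$, pushes the conjugation inside via ${}^{z}({}^{w}h)={}^{({}^{z}w)}({}^{z}h)$, applies Lemma~\ref{Lem:cong2} both to $(I,\Gamma)$ and to $(A,\Lambda)$ (using the observation that $\FU(2n,t^lA,t^lA,t^l\Lambda)=\FU(2n,t^lA,t^l\Lambda)$), and closes with the normality of $\FUnT{l}{I}$ in $\FU(2n,t^lA,t^l\Lambda)$. No discrepancies to report.
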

\begin{proof}
We have
\begin{multline*}
{}^{\FU^1\big(2n,\frac{A}{t^m},\frac{\Lambda}{t^m}\big)}\FUnT{p}{I}\\
={}^{\FU^1\big(2n,\frac{A}{t^m},\frac{\Lambda}{t^m}\big)}
\Bigl({}^{\EUnt{p}{A}}{\FUnt{p}{I}}\Bigr)\\
\subseteq{}^{{}^{\FU^1\big(2n,\frac{A}{t^m},\frac{\Lambda}{t^m}\big)}{\EUnt{p}{A}}}
\left({}^{\FU^1\big(2n,\frac{A}{t^m},\frac{\Lambda}{t^m}\big)}{\FUnt{p}{I}}\right).
\end{multline*}
\par
By Lemma~\ref{Lem:cong2}, there exists a sufficiently large $p$ such that
the conjugate in the exponent is contained in
$\FU(2n,t^lA,t^lA,t^l\Lambda)=\FU(2n,t^lA,t^l\Lambda)$,
whereas the conjugate in the base is contained in $\FUnT{l}{I}$.
Since the group $\FUnT{l}{I}$ is normalised by $\FU(2n,t^lA,t^l\Lambda)$,
our claim follows.
\end{proof}

The next lemma is a direct consequence of Lemma~\ref{Lem:cong3}.
Observe, that here we start working with {\it two\/} form ideals
$(I,\Gamma)$ and $(J,\Delta)$.
\begin{Lem}\Label{Lem:cong4}
For any give $m,l$ there exists a sufficiently large $p$ such that
\begin{multline*}
{}^{\FU^1\big(2n,\frac{A}{t^m},\frac{\Lambda}{t^m}\big)}
\big[\FUnT{p}{I},\FU(2n,t^pA,t^pJ,t^p\Delta)\big]\\
\subseteq\big[\FUnT{l}{I},\FU(2n,t^lA,t^lJ,t^l\Delta)\big].
\end{multline*}
\end{Lem}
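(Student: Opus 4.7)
The plan is to exploit the elementary commutator identity ${}^g[x,y]=[{}^gx,{}^gy]$. Since conjugation by any element is a group automorphism, it preserves both products and commutators, so for any subset $G$ and any subgroups $H,K$ of an ambient group one has
$$ {}^{G}[H,K]\le[{}^{G}H,{}^{G}K]. $$
I apply this with $G=\FU^1\big(2n,\frac{A}{t^m},\frac{\Lambda}{t^m}\big)$, $H=\FUnT{p}{I}$ and $K=\FU(2n,t^pA,t^pJ,t^p\Delta)$. This reduces matters to controlling the two conjugates ${}^{G}H$ and ${}^{G}K$ separately.

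For the first factor, Lemma \ref{Lem:cong3} supplies an integer $p_1=p_1(m,l)$ such that
$$ {}^{G}\FUnT{p_1}{I}\le\FUnT{l}{I}. $$

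For the second factor, I observe that the entire machinery of Lemmas \ref{Lem:cong}, \ref{Lem:cong2} and \ref{Lem:cong3} is developed for an \emph{arbitrary} form ideal, hence applies verbatim with $(J,\Delta)$ in place of $(I,\Gamma)$. This furnishes an integer $p_2=p_2(m,l)$ with
$$ {}^{G}\FU(2n,t^{p_2}A,t^{p_2}J,t^{p_2}\Delta)\le\FU(2n,t^lA,t^lJ,t^l\Delta). $$

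Taking $p\ge\max(p_1,p_2)$ and assembling the two inclusions via the identity above produces the required containment. There is essentially no obstacle beyond this bookkeeping: the lemma is a straightforward corollary of Lemma \ref{Lem:cong3} applied once to each of the two factors in the commutator subgroup.
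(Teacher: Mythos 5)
Your proof is correct and is precisely the argument the paper intends: Lemma~\ref{Lem:cong4} is presented there as a direct consequence of Lemma~\ref{Lem:cong3}, obtained exactly as you do by using ${}^g[x,y]=[{}^gx,{}^gy]$ and applying Lemma~\ref{Lem:cong3} once to $(I,\Gamma)$ and once, verbatim, to $(J,\Delta)$, then taking $p$ large enough for both.
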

However, in this lemma, denominators occur in the conjugating elements,
not inside the commutators. To prove our main results, we will have to
face denominators {\it inside\/} the commutator. This is done in the
next section.


\section{Commutator Calculus}
\label{ghgh}

In the present section we develop a relative version of unitary commutator
calculus. As above, we always assume that $n\ge 3$, that $(A,\Lambda)$ is a
form ring over a commutative ring $R$ with involution, that $R_0$ is the subring
of $R$, generated by $a\bar a$, where $a\in R$, and, finally, that $(I,\Gamma)$
and $(J,\Delta)$ are two form ideals of
$(\Form)$. As before,
all calculations take place inside the group $\EU(2n,A_t,\Lambda_t)$.
\begin{Lem}\Label{Lem:comm}
Suppose $m,l,K$ are given. For any $t\in R$ there is an integer $p$, independent of $K$, such that
\begin{multline*}
\Big[\FUntOK{4p}{I},\FU^1\Big(2n,\frac{J}{t^m},\frac{\Delta}{t^m}\Big)\Big]\subseteq\\
[\FUnT{l}{I},\FU(2n,t^lA,t^lJ,t^l\Delta)]. \Label{eq:01}
\end{multline*}
\end{Lem}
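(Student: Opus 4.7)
The plan is to imitate the proof of Lemma~\ref{Lem:cong}, but with an outer induction on $K$ that costs nothing in the exponent~$p$. I will first reduce to $K=1$ and then handle that base case by the same four-way case analysis on root types as in Lemma~\ref{Lem:cong}, expressing the final root factors as commutators via the Steinberg relations read backwards.

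\emph{Reduction to $K=1$.} Given $y=y_1\cdots y_K\in\FUntOK{4p}{I}$ and $z\in\FUnmJ{J}{1}$, I expand via commutator identity (C2):
\[
[y,z]=\prod_{i=1}^{K}{}^{y_1\cdots y_{i-1}}[y_i,z].
\]
Each prefix $y_1\cdots y_{i-1}$ lies in $\FUnt{4p}{I}\subseteq\FFUnt{l}$ provided $4p\ge l$. Since $\FUnT{l}{I}$ and $\FUnTJ{l}{J}$ are both by definition normal in $\FFUnt{l}$, their commutator subgroup is stable under conjugation by $\FFUnt{l}$. Hence, once the $K=1$ case is proved for some $p$ depending only on $l,m$, the full statement follows with the \emph{same} $p$, independent of $K$.

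\emph{Case analysis for $K=1$.} Here $[y_1,z]=[T_{ij}(t^{4p}\alpha),T_{hk}(b/t^m)]$ with each transvection either short ($i\ne\pm j$) or long ($i=-j$). I split into the four master cases (I) short--short, (II) short--long, (III) long--short, (IV) long--long, breaking each into subcases according to how the indices intersect, exactly as in the proof of Lemma~\ref{Lem:cong}. In every subcase one of the relations (R3)--(R6) applies directly: either the two roots commute, or the bracket produces at most two explicit root factors of one of the types $T_{i'h'}(t^{4p-m}\alpha b)$ (short) or $T_{i',-i'}(t^{4p-2m}\bar b\alpha b)$, $T_{i',-i'}(t^{4p-m}(\alpha b-\lambda^\ast\bar b\bar\alpha))$ (long).

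\emph{Recognising each factor as a commutator.} To convert a short root factor into the required form, I choose an auxiliary index $j'\ne\pm i',\pm h'$ and use (R4) in reverse,
\[
T_{i'h'}(t^{4p-m}\alpha b)=\bigl[T_{i'j'}(t^l\alpha),\,T_{j'h'}(t^{4p-m-l}b)\bigr],
\]
valid as soon as $4p-m-l\ge l$. Since $\alpha\in I$ and $b\in J$, the first factor lies in $\FUnt{l}{I}\subseteq\FUnT{l}{I}$ and the second in $\FUnt{l}{J}\subseteq\FUnTJ{l}{J}$, so the commutator lands in $[\FUnT{l}{I},\FUnTJ{l}{J}]$. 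Long root factors arising from (R5) or (R6) are written as commutators in exactly the same way, using (R5) or (R6) backwards and splitting the $t$-power in the middle. Taking $p$ larger than the (finitely many) thresholds arising in the subcases ensures all exponent inequalities simultaneously.

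\emph{Main obstacle.} The genuinely hard subcase is Case~(IV) with two opposite long roots $T_{i,-i}(t^{4p}\alpha)$ and $T_{-i,i}(b/t^m)$, since no Steinberg relation applies directly. Following Case~IV(2) of Lemma~\ref{Lem:cong}, I pick $q\ne\pm i$ with $\epsilon(q)=\epsilon(-i)$ and decompose one of the long roots, using (R6), as a product of a short root and a bracket of short/long elements at intermediate $t$-levels. Expanding the outer commutator via (C1)--(C2) reduces this to cases already treated in (II) and (III), and each surviving term is then rewritten as a commutator of the required type exactly as above. Once $p$ is chosen to dominate all of these thresholds, the $K=1$ case is proven, and the induction from the first paragraph completes the proof.
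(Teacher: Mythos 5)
Your overall architecture matches the paper's: the reduction to $K=1$ via (C2) and normality (with $p$ independent of $K$), the four-way case split on root types, and the device of rewriting each resulting root factor as a commutator by splitting the power of $t$ between the two arguments of (R4)--(R6) read backwards. Those parts are fine.

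There is, however, a genuine gap in your case analysis. You assert that ``in every subcase one of the relations (R3)--(R6) applies directly,'' and you single out Case IV (opposite long roots) as the only genuinely hard configuration. This misses the subcase of \emph{opposite short roots}: in Case I with $i=-h$, $j=-k$, relation (R1) turns the second factor into $T_{ji}(\beta'/t^m)$, so one must compute $[T_{ij}(t^{4p}\alpha),T_{ji}(\beta'/t^m)]$, and no Steinberg relation evaluates this commutator. The paper handles it by decomposing $T_{ij}(t^{4p}\alpha)=[T_{iq}(t^{2p}\alpha),T_{qj}(t^{2p})]$ and applying the Hall--Witt identity (C3); the resulting expression contains a factor of the form ${}^{T_{ji}(\beta/t^m)}[\cdots]$, i.e.\ a conjugate \emph{by a fractional element} of a commutator at an intermediate $t$-level, and disposing of it requires invoking the conjugation calculus (Lemma~\ref{Lem:cong4}). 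Your proposal never appeals to Lemmas~\ref{Lem:cong2}--\ref{Lem:cong4} inside the commutator analysis, so you have no mechanism for absorbing these fractional conjugates; normality of $\FUnT{l}{I}$ in $\FFUnt{l}$ only covers conjugation by \emph{integral} elements. A similar remark applies to your Case IV: ``expanding via (C1)--(C2)'' is not enough there either --- after the (C2) split the surviving triple commutator is again treated with Hall--Witt, although in that case the conjugating elements happen to be integral, so normality does suffice. To repair the proof you need to (i) recognise the opposite-short-root subcase as one where no elementary relation applies, and (ii) explicitly route the fractional conjugations it produces through Lemma~\ref{Lem:cong4}.
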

\begin{proof}
An easy induction, using identity (C2), shows that 
$$\big [\prod_{i=1}^K u_i,x\big]=\prod_{i=1}^K {}^{\prod_{j=1}^{K-i}u_j}[u_{K-i+1},x],$$ where by convention $\prod_{j=1}^0 u_j=1$. This, with the fact that $\FUnT{l}{I}$ and $\FU(2n,t^lA,t^lJ,t^l\Delta)$  are normalized by 
$\FU(2n,t^pA,t^p\Lambda)$, where $p\geq l$, show that it is enough to establish the lemma for $K=1$, namely, 
\begin{multline*}
\Big[\FUntO{4p}{I},\FU^1\Big(2n,\frac{J}{t^m},\frac{\Delta}{t^m}\Big)\Big]\subseteq\\
[\FUnT{l}{I},\FU(2n,t^lA,t^lJ,t^l\Delta)]. 
\end{multline*}
Let
$$ T_{ij}(t^{4p}\alpha)\in\FUntO{4p}{I},\qquad
\displaystyle T_{hk}\Big(\frac{\beta}{t^m}\Big)\in
\FU^1\Big(2n,\frac{J}{t^m},\frac{\Delta}{t^m}\Big), $$
\noindent
and set
$$ g=\Big[T_{ij}(t^{4p}\alpha),T_{hk}\Big(\frac{\beta}{t^m}\Big)\Big]. $$
\noindent
As in Lemma~\ref{Lem:cong}, we divide the proof into four cases according to
whether root elements $T_{ij}(t^{4p}\alpha)$ and $T_{hk}\Big(\frac{\beta}{t^m}\Big)$
are long or short.
\par\medskip
\noindent Case I.  Both $T_{ij}(t^{4p}\alpha)$ and
$\displaystyle T_{hk}\Big(\frac{\beta}{t^m}\Big)$ are short root elements, i.e.,
$i\ne\pm j$, $h\ne\pm k$, $\alpha\in I$ and $\beta\in J$. The proof breaks
further into following four subcases:
\par\smallskip
(1) $i\ne k$ and $j\ne h$;
\par\smallskip
(2) $i=k$ and $j\ne h$;
\par\smallskip
(3) $i\ne k$ and $j=h$;
\par\smallskip
(4) $i=k$ and $j=h$.
\par\smallskip
We shall prove subcases (1) and (2) and leave it to the reader to reduce
subcases (3) and (4) to subcase (1).  In subcase (1), we have further four
subcases:
\par\smallskip
(i) $i\ne-h$ and $j\ne-k$. By Identity (R3),
$T_{ij}(t^{4p}\alpha)$ commutes with $\displaystyle T_{hk}\Big(\frac{\beta}{t^m}\Big)$.
Therefore, $g=1$ and we are done.
\par\smallskip
(ii) $i=-h$ and $j\ne-k$. In this subcase,
$$ g=\Big[T_{ij}(t^{4p}\alpha),T_{-i,k}\Big(\frac{\beta}{t^m}\Big)\Big]. $$
\noindent
If $j=k$, then by (R5) one has
\begin{multline*}
g=\Big[T_{ij}(t^{4p}\alpha),T_{-i,j}\Big(\frac{\beta}{t^m}\Big)\Big]=\\
T_{-j,j}\big(-\lambda^{(\ep(j)-\ep(i))/2}
\bar\alpha\beta t^{4p-m}+\lambda^{(\ep(j)-\ep(-i))/2 } \bar \beta \alpha t^{4p-m}\big)=\\
\big[T_{ij}(t^{2p}\alpha),T_{-i,j}(t^{2p-m}{\beta})\big]\in\\
[\FUnT{2p}{I},\FUnTJ{2p-m}{J}].
\end{multline*}
If $i\ne k$, then by (R4) one has
\begin{multline*}
g=[T_{ij}(t^{4p}\alpha),T_{-i,k}(\frac{\beta}{t^m}) ]=
T_{-j,k}(-\lambda^{(\ep(j)-\ep(i))/2}\bar\alpha\beta t^{4p-m})=\\
[T_{ij}(t^{2p}\alpha),T_{-i,k}({t^{2p-m}}{\beta})]\in \\
[\FUnT{2p}{I},\FUnTJ{2p-m}{J}].
\end{multline*}
\par
(iii) $i\ne-h$ and $j=-k$.\\
It follows from an argument similar to that used in subcase (ii).
\par\smallskip
(iv) $i=-h$ and $j=-k$.\\
In this subcase, $\displaystyle g=\Big[{T_{ij}(t^{4p}\alpha)},
T_{-i,-j}\Big(\frac{\beta}{t^m}\Big)\Big].$  By (R1) one has
$$ g=\Big[{T_{ij}(t^{4p}\alpha)},\
T_{j,i}\Big(\lambda^{(\ep(-i)-\ep(-j))/2}\frac{\beta}{t^m}\Big)\Big]. $$
\par\smallskip
To simplify notation, we denote $\lambda^{(\ep(-i)-\ep(-j) )/2}{\beta}$ by $\beta$.
Let $q\not = \pm i, \pm j$. Then by (C3) we have
\begin{multline*}
g=\Big[T_{ij}(t^{4p}\alpha),T_{ji}\Big(\frac{\beta}{t^m}\Big)\Big]=
\Big[[T_{i,q}(t^{2p}\alpha),T_{q,j}(t^{2p})],T_{ji}\Big(\frac{\beta}{t^m}\Big)\Big]=\\
{}^{T_{i,q}(t^{2p}\alpha)}
{}^{T_{i,q}(-t^{2p}\alpha)}\Big[[T_{i,q}(t^{2p}\alpha),T_{q,j}(t^{2p})],
T_{ji}\Big(\frac{\beta}{t^m}\Big)\Big].
\end{multline*}
Applying Hall--Witt identity, we get
\begin{multline*}
g={}^{T_{iq}(t^{2p}\alpha)}\Bigl({}^{T_{qj}(t^{2p})}
\Big[T_{iq}(-t^{2p}\alpha),\Big[T_{qj}(-t^{2p}),T_{ji}\Big(\frac{\beta}{t^m}\Big)\Big]\Big]
\times \\
{}^{T_{ji}\big(\frac{\beta}{t^m}\big)}\Big[T_{qj}(t^{2p}),
\Big[T_{ji}\Big(-\frac{\beta}{t^m}\Big),T_{iq}(-t^{2p}\alpha)\Big]\Big]\Bigr).
\end{multline*}
By (R4) this expression can be further rewritten as
$$ g={}^{T_{iq}(t^{2p}\alpha)}\Bigl({}^{T_{qj}(t^{2p})}
\bigl[T_{iq}(-t^{2p}\alpha), T_{qi}(-t^{2p-m}\beta)\bigr]
\cdot
{}^{T_{ji}(\frac{\beta}{t^m})}
\bigl[T_{qj}(t^{2p}),T_{jq}(t^{2p-m}\alpha\beta)\big]\Bigr). $$
\noindent
Clearly, for all $p$ such that $2p-m>l$ the first factor in the base
belongs to
$$ [\FUnT{l}{I},\FUnTJ{l}{J}]. $$
\noindent
On the other hand, the second factor equals
\begin{multline*}
y={}^{T_{ji}(\frac{\beta}{t^m})}
\bigl[T_{qj}(t^{2p}),T_{jq}(t^{2p-m}\alpha\beta)\big]=\\
{}^{T_{ji}(\frac{\beta}{t^m})}\bigl[T_{qj}(t^{2p}),
[T_{ji}(t^{\LF\frac{2p-m}{2}\RF}\beta),T_{iq}(t^{2p-m-\LF\frac{2p-m}{2}\RF}\alpha)]\bigr].
\end{multline*}
\noindent
Set
$$ \displaystyle p'=\max\Big(\Big\LF\frac{2p-m}{2}\Big\RF,
2p-m-\Big\LF\frac{2p-m}{2}\Big\RF\Big). $$
\noindent
Normality of $\FUnT{l}{I}$ implies that
\begin{multline}
\bigl[T_{qj}(t^{2p}),
[T_{ji}(t^{\LF\frac{2p-m}{2}\RF}\beta),
T_{iq}(t^{2p-m-\LF\frac{2p-m}{2}\RF}\alpha)]\bigr]\in\\
[\FUnT{p'}{I},\FUnTJ{p'}{J}].
\end{multline}
\noindent
Hence
$$ y\in{}^{T_{ji}\left(\frac{\beta}{t^m}\right)} [\FUnT{p'}{I},\FUnTJ{p'}{J}]. $$
\noindent
Therefore, by Lemma \ref{Lem:cong4}, for any given $l$,
there is a sufficiently large $p'$  such that,
$$ y\in [\FUnT{l}{I},\FUnTJ{l}{J}]. $$
\par
Summarising the above inclusions for the first and the second factors,
we see that for a sufficiently large $p$, one has
$$ g\in [\FUnT{l}{I},\FUnTJ{l}{J}]. $$
\noindent
This finishes the proof of Subcase (1).
\par\smallskip
In Subcase (2), we have
$$ g=\Big[T_{ij}(t^{4p}\alpha),T_{hi}\Big(\frac{\beta}{t^m}\Big)\Big]=
\Big[T_{ij}(t^{4p}\alpha),
T_{-i,-h}\Big(\lambda^{(\ep(i)-\ep(h))/2}\frac{\beta}{t^m}\Big)\Big]. $$
\noindent
By Subcase (1)(ii) it follows that
$$ g\in[\FUnT{l}{I},\FUnTJ{l}{J}] $$
\noindent
for a suitable $p$.
\par\smallskip
\noindent Case II: $T_{ij}(t^{4p}\alpha)$ is a short root element and
$\displaystyle T_{hk}\Big(\frac{\beta}{t^m}\Big)$ is a long root element,
i.e., $i\ne\pm j$, $h=-k$, $\alpha\in I$ and $\displaystyle{\beta}\in{\lambda^{-(\ep(h)+1)/2}\Delta}$.
This case is handled by dividing into three subcases:
\par\smallskip
(1) $i\ne-h$ and $j\ne h$. By (R3), $T_{ij}$ commutes with $T_{hk}$.
Therefore, $g=1$ and we are done.
\par\smallskip
(2) $i=-h$ and $j\ne h$. By (R6) we have
$$ g=\Big[T_{ij}(t^{4p}\alpha),T_{-i,i}\Big(\frac{\beta}{t^m}\Big)\Big]=
{\Big(T_{-i,j}(\beta\alpha t^{4p-m})T_{-j,j}(-\lambda^{(\ep(j)-\ep(-i))/2}
\bar\alpha\beta\alpha t^{8p-m})\Big)}^{-1}. $$
\par
Further, set
$$ M=\Big\LF\frac{8p-m}{3}\Big\RF,\qquad
M'=\Big(8p-m-2\Big\LF\frac{8p-m}{3}\Big\RF\Big) .$$
\noindent
Then by (R6) one has
\begin{multline*}
g^{-1}=T_{-j,i}(\lambda^{(\ep(j)-\ep(i))/2}t^{4p-m}\alpha\beta)
T_{-j,j}(\lambda^{(\ep(j)-\ep(i))/2}t^{8p-m}\alpha\beta\bar\alpha)=\\
=T_{-j,i}(\lambda^{(\ep(j)-\ep(i))/2}t^{4p-m}\alpha\beta)
T_{-j,j}(\lambda^{(\ep(j)-\ep(i))/2}t^M\alpha t^{M'}\beta
\overline{t^M\alpha})=\\
=T_{-j,i}(\lambda^{(\ep(j)-\ep(i))/2}t^{4p-m}\alpha\beta)
T_{-j,i}(-\lambda^{(\ep(j)-\ep(i))/2}t^{M'+M}\alpha\beta)\times\\
\times[T_{-j,-i}(\lambda^{(\ep(j)-\ep(i))/2}t^{M}\alpha),
T_{-i,i}(t^{M'}{\beta})].
\end{multline*}
Picking an $q\ne\pm i,\pm j$, we see that the first factor of the
above expression equals
\begin{multline*}
T_{-j,i}(\lambda^{(\ep(j)-\ep(i))/2}t^{4p-m}\alpha\beta)
T_{-j,i}(-\lambda^{(\ep(j)-\ep(i))/2}t^{M'+M}\alpha\beta)\\
=[T_{-j,q}(\lambda^{(\ep(j)-\ep(i))/2}t^{\LF 4p-m\RF/2}\alpha),
T_{q,i}(\lambda^{(\ep(j)-\ep(i))/2}t^{4p-m-\LF 4p-m\RF/2}\beta) ]\times\\
\times [T_{-j,q}(\lambda^{(\ep(j)-\ep(i))/2}t^{M'}\alpha),
T_{q,i}(\lambda^{(\ep(j)-\ep(i))/2}t^{M}\beta)].
\end{multline*}
Therefore, for any
$$ \displaystyle p\ge\max\Big(\frac{m+l}{4}+1,\frac{3l+m}{8}+1\Big), $$
\noindent
both factors of $g^{-1}$, and thus also $g^{-1}$ and $g$ themselves,
belong to
$$ [\FUnT{l}{I},\FUnTJ{l}{J}]. $$
\par
(3) $i\ne-h$ and $i=k$. It follows from an argument similar to that
used in Subcase (2).
\par

\smallskip
\noindent Case III: $T_{ij}(t^{4p}\alpha)$ is a long root element and
$T_{hk}\Big(\frac{\beta}{t^m}\Big)$ is a short root element. Namely, $i=-j$,
$h\ne\pm k$, $\alpha\in\Gamma$ and $\displaystyle a\in A$. This case
is treated by dividing into three subcases:
\par\smallskip
(1) $i\ne-h$ and $i\ne k$. By (R3), $T_{i,-i}$ commutes with  $T_{hk}$.
Therefore, $g=1$ and we are done.
\par\smallskip
(2) $i=-h$ and $i\ne k$. By (R6) we have
\begin{multline*}
g=[T_{i,-i}(t^{4p}\alpha),{T_{-i,k}(\frac{\beta}{t^m})}]=
T_{i,k}(\alpha\beta t^{4p-m})
T_{-k,j}(-\lambda^{(\ep(k)-\ep(-i))/2}\bar\beta\alpha\beta t^{4p-2m})=\\
=T_{i,k}(\alpha\beta t^{4p-m})
T_{ik}(-t^{3p-m}\alpha\beta)\cdot
[T_{i,-i}(t^{2p}\alpha),T_{-i,k}(t^{p-m}\beta )].\\
\end{multline*}
When $p\ge m+l$, both factors of the above expression belong to
$$ [\FUnT{l}{I},\FUnTJ{l}{J}]. $$
\par
(3) $i\ne-h$ and $j=h$. It follows from an argument similar to that
used in Subcase~(2).
\par\smallskip\noindent
Case IV: Both $T_{ij}(t^{4p}\alpha)$ and  $T_{hk}\Big(\frac{\beta}{t^m}\Big)$
are long root elements. Namely, $i=-j$, $h=-k$, $\alpha\in\Gamma$ and
$\displaystyle\beta\in\Delta$. This case is handled by further subdividing
it into two subcases.
\par\smallskip
(1)  $i\ne-h$. By (R3), two non-opposite long root elements commute,
and thus $g=1$.
\par\smallskip
(2) $i=-h$. Pick an $q\ne\pm i$. Without loss of generality, we may assume
that $\ep(q)=\ep(-i)$. Then by (R6) we have
\begin{multline*}
g=\Big[T_{i,-i}(t^{4p}\alpha),T_{-i,i}\Big(\frac{\beta}{t^m}\Big)\Big]=
[T_{i,-i}(t^{p}t^{2p}\alpha\overline{t^{p}}),
T_{-i,i}\Big(\frac{\beta}{t^m}\Big)\Big]=\\
=\Big[T_{i,-q}(-t^{3p}\alpha)
[T_{iq}(t^p),T_{q,-q}(t^{2p}\alpha)],T_{-i,i}\Big(\frac{\beta}{t^m}\Big)\Big].
\end{multline*}
By (C2) one has
$$ g={}^{T_{i,-q}(-t^{3p}\alpha)}
\Big[[T_{iq}(t^p),T_{q,-q}(t^{2p}\alpha)],
T_{-i,i}\Big(\frac{\beta}{t^m}\Big)\Big]\cdot
\Big[T_{i,-q}(-t^{3p}\alpha),T_{-i,i}\Big(\frac{\beta}{t^m}\Big)\Big]. $$
\noindent
We claim that for a sufficiently large $p$ both factors on the right hand
side belong to
$$ [\FUnT{l}{I},\FUnTJ{l}{J}]. $$
\noindent
For the second factor this follows from Case II. Thus, it remains to show that
$$ \Big[[T_{i,q}(t^p),T_{q,-q}(t^{2p}\alpha)],T_{-i,i}\Big(\frac{\beta}{t^m}\Big)\Big]\in
[\FUnT{l}{I},\FUnTJ{l}{J}]. $$
\noindent
But
\begin{multline*}
\Big[[T_{i,q}(t^p),T_{q,-q}(t^{2p}\alpha)],T_{-i,i}\Big(\frac{\beta}{t^m}\Big)\Big]=\\
={}^{T_{q,-q}(-t^{2p}\alpha)}{}^{T_{q,-q}(t^{2p}\alpha)}
\Big[[T_{i,q}(t^p),T_{q,-q}(t^{2p}\alpha)],T_{-i,i}\Big(\frac{\beta}{t^m}\Big)\Big].
\end{multline*}
\noindent
By Hall--Witt Identity one has
\begin{multline*}
{}^{T_{q,-q}(-t^{2p}\alpha)}\bigg({}^{T_{i,q}(-t^p)}\Big[T_{q,-q}(-t^{2p}\alpha),
\Big[T_{-i,i}\Big(\frac{\beta}{t^m}\Big),T_{iq}(-t^p)\Big]\Big]\times\\
\times {}^{T_{-i,i}\left(-\frac{\beta}{t^m}\right)}\Big[T_{i,q}(t^p),
\Big[T_{q,-q}(-t^{2p}\alpha),T_{-i,i}\Big(-\frac{\beta}{t^m}\Big)\Big]\Big]\bigg).
\end{multline*}
By (R3) this can be further rewritten as
$$ {}^{T_{q,-q}(-t^{2p}\alpha)}
\bigg({}^{T_{i,q}(t^p)}\Big[T_{q,-q}(-t^{2p}\alpha),
\Big[T_{-i,i}\Big(\frac{\beta}{t^m}\Big),T_{i,q}(-t^p)\Big]\Big]\bigg). $$
\noindent
In turn, by (R6) this is equal to
$$ {}^{T_{q,-q}(-t^{2p}\alpha)}
\bigg({}^{T_{i,q}(t^p)}
\big[ T_{q,-q}(-t^{2p}\alpha),
T_{-i,j}(-t^{p-m}\beta)T_{-q,q}(-\lambda t^{2p-m}\beta)\big]\bigg). $$
\noindent
When $p>l+m$, the commutator in the base belongs to
$$ [\FUnT{l}{I},\FUnTJ{l}{J}]. $$
\noindent
Both $\FUnT{l}{I}$ and $\FUnTJ{l}{J}$ are normalised by $\EUnt{l}{A}$. As
$T_{q,-q}(-t^{2p}\alpha)$ and $T_{i,q}(t^p)$ belong to $\EUnt{l'}{A}$, it follows that
Thus, the first factor also belongs to
$$ [\FUnT{l}{I}, \FUnTJ{l}{J}], $$
\noindent
as claimed.
\par
This finishes the proof of Case IV, and thus the whole proof.
\end{proof}

\begin{Lem}\Label{Lem:left}
Suppose $m,l,K$ are given. For any $t\in R$ there is an integer $p$, independent of $K$, such that
\begin{multline*}
\Big[{}^{\FU^K(2n,t^pA,t^p\Lambda)}\FUntO{p}{I},
\FU^1\Big(2n,\frac{J}{t^m},\frac{\Delta}{t^m}\Big)\Big]\subseteq \\
[\FUnT{l}{I},\FUnTJ{l}{J}].
\end{multline*}
\end{Lem}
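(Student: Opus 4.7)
The plan is to reduce Lemma~\ref{Lem:left} to Lemma~\ref{Lem:comm} by absorbing the outer conjugation into the first entry of the commutator. The starting point is the identity ${}^y x = [y,x]\cdot x$, valid for arbitrary group elements. Writing $y=u_1\cdots u_K\in\FU^K(2n,t^pA,t^p\Lambda)$ as a product of $K$ elementary transvections, each at integer level $t^p$, and iterating commutator identity (C2), we get
\[
[y,x]=\prod_{i=K}^{1}{}^{u_1\cdots u_{i-1}}[u_i,x].
\]

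Each commutator $[u_i,x]$ is a commutator of a transvection at level $t^p$ in $A$ or $\Lambda$ with $x\in\FUntO{p}{I}$, also at level $t^p$, and by the Steinberg relations (R3)--(R6) is a product of at most two elementary transvections with coefficients in $t^{2p}I$ or $t^{2p}\Gamma$. Each conjugating factor $u_j$ further acts via (R3)--(R6), introducing, at each step, a bounded number of further elementary transvections at integer levels at least $t^{3p}$. Since no denominators are created in any of these expansions, there exists a positive integer $K'$ depending only on $K$ (polynomially or at worst exponentially in $K$ after unrolling all the Steinberg expansions) such that for any $p$,
\[
{}^y x\in\FU^{K'+1}(2n,t^pI,t^p\Gamma).
\]

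Now let $P$ be the integer furnished by Lemma~\ref{Lem:comm} for the parameters $l$, $m$, and the length parameter $K'+1$; the crucial point is that by Lemma~\ref{Lem:comm} this integer $P$ does \emph{not} depend on $K'+1$. Choose $p\geq 4P$ (and $p\geq l$). Then $\FU^{K'+1}(2n,t^pI,t^p\Gamma)\subseteq\FUntk{4P}{I}{K'+1}$, so Lemma~\ref{Lem:comm} yields, for any $z\in\FU^1\bigl(2n,J/t^m,\Delta/t^m\bigr)$,
\[
[{}^y x,z]\in\bigl[\FUntk{4P}{I}{K'+1},\FU^1(2n,J/t^m,\Delta/t^m)\bigr]\subseteq[\FUnT{l}{I},\FUnTJ{l}{J}].
\]
To pass from single commutators of generators to the full commutator subgroup in the statement, one uses identity (C2), which reduces commutators $[\prod_j({}^{y_j}x_j),z]$ to conjugates of the generator commutators by elements of ${}^{\FU^K(2n,t^pA,t^p\Lambda)}\FUntO{p}{I}$; these conjugating elements lie in $\FFUnt{l}$, which normalises both $\FUnT{l}{I}$ and $\FUnTJ{l}{J}$, and hence normalises their commutator subgroup, giving the desired inclusion.

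The main obstacle is keeping the choice of $p$ free of any dependence on $K$. The integer $K'$ produced by our Steinberg expansion of ${}^y x$ grows with $K$; nevertheless $p$ remains independent of $K$, because the integer $P$ of Lemma~\ref{Lem:comm} is, by design, independent of the length parameter appearing there. This is the essential role of the ``independent of $K$'' clause in Lemma~\ref{Lem:comm} and must be invoked here without slippage.
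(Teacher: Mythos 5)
There is a genuine gap at the very first step of your reduction, namely the claim that each $[u_i,x]$ is handled by the Steinberg relations (R3)--(R6) and that consequently ${}^y x\in\FU^{K'+1}(2n,t^{p}I,t^{p}\Gamma)$ for some bounded $K'$. The relations (R3)--(R6) say nothing about the commutator of two root elements in \emph{opposite} positions, e.g.\ $[T_{ij}(\xi),T_{ji}(\zeta)]$ or $[T_{i,-i}(\xi),T_{-i,i}(\zeta)]$: the sum of opposite roots is not a root, there is no Chevalley commutator formula, and the resulting matrix is a full block congruent to $e$ modulo $t^{2p}I$ but not, by any direct relation, a product of two (or boundedly many) elementary transvections with parameters in $t^{p'}I$. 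More fundamentally, the assertion that a conjugate of an element of $\FUntO{p}{I}$ by an element of $\FU^K(2n,t^pA,t^p\Lambda)$ lies in some $\FU^{K'}(2n,t^{p'}I,t^{p'}\Gamma)$ amounts to saying that the unrelativised group $\FU(2n,t^{p'}I,t^{p'}\Gamma)$ is (up to level loss) normalised by $\FU(2n,t^{p'}A,t^{p'}\Lambda)$. This is false in general --- it is precisely the reason the paper introduces the strictly larger normal closure $\FUnT{l}{I}$ and calls $\FU(2n,t^lI,t^l\Gamma)$ ``a bit too small''. Your conjugate ${}^y x$ is, by definition, a generator of $\FUnT{p}{I}$, and in general it cannot be pushed back into the smaller set $\FUntOK{4P}{I}$ to which Lemma~\ref{Lem:comm} applies; so the reduction to Lemma~\ref{Lem:comm} does not go through as written. (Your use of the independence of $p$ from the length parameter is fine; that is not where the problem lies.)

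The paper avoids this obstacle by never re-expanding the conjugate. Writing $a\in\FU^K(2n,t^pA,t^p\Lambda)$, $b\in\FUntO{p}{I}$, $c\in\FU^1\big(2n,\frac{J}{t^m},\frac{\Delta}{t^m}\big)$, it uses (C2) to split
$[{}^ab,c]=\big({}^{b}[[b^{-1},a],c]\big)\cdot[b,c]$, disposes of $[b,c]$ by Lemma~\ref{Lem:comm}, and then applies the Hall--Witt identity (C3) to the first factor. This rewrites it in terms of ${}^{b}[a^{-1},[c,b]]$ and ${}^{c^{-1}}[b^{-1},[a^{-1},c^{-1}]]$, both of which are again accessible to Lemma~\ref{Lem:comm}; the residual conjugation by the \emph{fractional} element $c^{-1}$ is then absorbed by the conjugation calculus, Lemma~\ref{Lem:cong4}. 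If you want to salvage your approach, you would have to replace your expansion claim by membership ${}^yx\in\FUnTk{p'}{I}{K'}$ and then prove a version of Lemma~\ref{Lem:comm} for such generators --- but that version \emph{is} Lemma~\ref{Lem:left}, so the argument would be circular.
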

\begin{proof}
Let $a,b$ and $c$ be arbitrary elements in  $\FU^K(2n,t^pA,t^p\Lambda),\FUntO{p}{I}$
and  $\FU^1(2n,\frac{J}{t^m}, \frac{\Delta}{t^m})$, respectively. Then by (C2)
one has
\begin{equation}\label{eq:abc}
 [{}^ab,c]=\bigl[b[b^{-1},a],c\bigr]=
\Bigl({}^{b}\bigl[[b^{-1},a],c\bigr]\Bigr)[b,c].
\end{equation}
\noindent
By Lemma~\ref{Lem:comm}, we may find a sufficiently large $p$, such that for
the second factor of  Equation~(\ref{eq:abc}),
$$ [b,c]\in[\FUnT{l}{I},\FUnTJ{l}{J}]. $$
\noindent
Applying Hall--Witt identity to the first of the above factors, we get
$$ {}^{b}\bigl[[b^{-1},a],c\bigr]={}^{ba^{-1}}
\Bigl({}^{a}\bigl[[b^{-1},a],c\bigr]\Bigr)\\
={}^{ba^{-1}}\Bigl({}^{b}\bigl[a^{-1},[c,b]\bigr]
\times{}^{c^{-1}}\bigl[b^{-1},[a^{-1},c^{-1}]\bigr]\Bigr). $$
\noindent
By Lemma~\ref{Lem:comm}, there is a sufficiently large $p$, such that
$$ [c,b]\in[\FUnT{l}{I},\FUnTJ{l}{J}]. $$
\noindent
Furthermore, $a\in\FU^K(2n,t^pA,t^p\Lambda)$ implies that
$$ {}^{b}\bigl[a^{-1},[c,b]\bigr]\in[\FUnT{l}{I},\FUnTJ{l}{J}]. $$
\noindent
Again, Lemma~\ref{Lem:comm} implies that for any given $l'$, there is a
sufficiently large $p$ such that
$$ [a^{-1},c^{-1}]\in [\FFUnt{l'},\FUnTJ{l'}{J}]\subseteq\FUnTJ{l'}{J}. $$
It follows immediately that
$$ \bigl[b^{-1},[a^{-1},c^{-1}]\bigr]\in [\FUnT{l'}{I},\FUnTJ{l'}{J}]. $$
\noindent
Therefore,
\begin{eqnarray*}
{}^{c^{-1}}\bigl[b^{-1},[a^{-1},c^{-1}]\bigr]\subseteq
{}^{{\FU^1(2n,\frac{A}{t^m},\frac{\Lambda}{t^m})}}[\FUnT{l'}{I},\FUnTJ{l'}{J}].
\end{eqnarray*}
Then by Lemma~\ref{Lem:cong4}, we may find a sufficiently large $l'$,
such that
\begin{multline*}
{}^{{\FU^1(2n,\frac{A}{t^m},\frac{\Lambda}{t^m})}}
[\FUnT{l'}{I},\FUnTJ{l'}{J}]\subseteq \\
[\FUnT{l}{I},\FUnTJ{l}{J}].
\end{multline*}
Hence we may find a sufficiently large $p$, such that
$$ {}^{c^{-1}}\bigl[b^{-1},[a^{-1},c^{-1}]\bigr]\in[\FUnT{l}{I},\FUnTJ{l}{J}]. $$
This finishes the proof.
\end{proof}
\begin{Lem}\Label{Lem:left2}
Suppose that $m,l$ are given. For any $t\in R$ there is an integer $p$
such that
\begin{multline*}
\Big[\FUnTk{p}{I}{},\FU^1\Big(2n,\frac{J}{t^m},\frac{\Delta}{t^m}\Big)\Big]\subseteq \\
\big[\FUnT{l}{I},\FUnTJ{l}{J}\big].
\end{multline*}
\end{Lem}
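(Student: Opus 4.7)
The plan is to deduce Lemma \ref{Lem:left2} from Lemma \ref{Lem:left} by writing a general element of $\FUnT{p}{I}$ as a product of conjugates of individual root elements, then expanding the commutator via identity (C2). First I would use the very definition of $\FUnT{p}{I}$ as the normal closure of $\FUnt{p}{I}$ in $\FFUnt{p}$ to write any $x\in\FUnT{p}{I}$ as a product
$$x = g_1 g_2 \cdots g_N,$$
with each $g_i = {}^{a_i}b_i$, where $a_i\in\FU^{K_i}(2n,t^pA,t^p\Lambda)$ for some $K_i$ and $b_i\in\FU^1(2n,t^pI,t^p\Gamma)$. Neither $N$ nor the $K_i$ are a priori bounded in terms of $x$, which is precisely why the $K$-independence built into Lemma \ref{Lem:left} will be indispensable.

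Next I would iterate commutator identity (C2) to obtain, for any $y\in\FU^1\big(2n,J/t^m,\Delta/t^m\big)$,
$$[x,y] = \prod_{i=1}^{N} {}^{g_1\cdots g_{i-1}}[g_i,y],$$
where the empty product at $i=1$ is understood to be trivial. By Lemma \ref{Lem:left}, applied with the prescribed $m$ and $l$, one may choose an integer $p$ depending only on $m$ and $l$ (and not on any $K_i$) so that every inner commutator $[g_i,y]$ lies in $[\FUnT{l}{I},\FUnTJ{l}{J}]$.

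Finally, after enlarging $p$ so that $p\ge l$, each prefix $g_1\cdots g_{i-1}$ lies in $\FFUnt{p}\subseteq\FFUnt{l}$. Since $\FUnT{l}{I}$ and $\FUnTJ{l}{J}$ are both normal in $\FFUnt{l}$ by the very definition of the normal closure, their commutator subgroup $[\FUnT{l}{I},\FUnTJ{l}{J}]$ is preserved under conjugation by any such prefix. Therefore each factor ${}^{g_1\cdots g_{i-1}}[g_i,y]$ remains in $[\FUnT{l}{I},\FUnTJ{l}{J}]$, and so does the whole product $[x,y]$.

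The main obstacle is not in the present lemma but already absorbed into Lemma \ref{Lem:left}: namely the uniformity of $p$ in the number $K$ of generators of the conjugating element. Once that uniformity is in hand, the present statement is a short formal consequence, with the only additional ingredient being the normality of $\FUnT{l}{I}$ and $\FUnTJ{l}{J}$ inside $\FFUnt{l}$.
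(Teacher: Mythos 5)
Your proposal is correct and follows essentially the same route as the paper: the authors likewise note that $\FUnT{p}{I}$ is generated by elements ${}^{\FU^K(2n,t^pA,t^p\Lambda)}\FUntO{p}{I}$, invoke the $K$-independence of $p$ in Lemma~\ref{Lem:left}, and conclude by induction using identity (C2). You have merely written out explicitly the induction and the normality argument for the conjugating prefixes that the paper leaves implicit.
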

\begin{proof}
Since $\FUnTk{p}{I}{}$ is a group generated by elements of the form $${}^{\FU^K(2n,t^pA,t^p\Lambda)}\FUntO{p}{I}$$ for all natural numbers $K$ and since in Lemma~\ref{Lem:left}, $p$ is independent of $K$, 
the lemma follows from Lemma~\ref{Lem:left} and Identity (C2) by an induction.
\end{proof}
\begin{Lem}\Label{Lem:right}
Suppose $m,l$ are given. For any $t\in R$ there is an integer $p$ such that
\begin{multline*}
\Big[\FUnt{p}{I},{}^{\EUnm{A}{1}}\FUnmJ{J}{1}\Big]\subseteq\\
[\FUnT{l}{I},\FUnTJ{l}{J}].
\end{multline*}
\end{Lem}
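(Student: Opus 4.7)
The plan is to reduce the statement to Lemmas~\ref{Lem:cong3}, \ref{Lem:cong4}, and \ref{Lem:left2}, by pulling the outer conjugation on the right factor of the commutator outside via identity (C4) and then handling the resulting inner commutator and outer conjugate by the commutator and conjugation calculus already established. Since the target subgroup $[\FUnT{l}{I},\FUnTJ{l}{J}]$ is a group, it is enough to show that every generator of the left-hand side, namely an element of the form $[b,{}^c d]$ with
$$b \in \FUnt{p}{I}, \qquad c \in \EUnm{A}{1}, \qquad d \in \FUnmJ{J}{1},$$
lies in $[\FUnT{l}{I},\FUnTJ{l}{J}]$. Applying identity (C4) gives
$$[b,{}^c d] \;=\; {}^{c}\bigl[{}^{c^{-1}}b,\,d\bigr].$$

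Next I would process the inner piece ${}^{c^{-1}}b$. Since $b\in\FUnt{p}{I}\le\FUnT{p}{I}$ and $c^{-1}\in\EUnm{A}{1}$, Lemma~\ref{Lem:cong3} yields, for $p$ sufficiently large in terms of a chosen threshold $p'$, that
$${}^{c^{-1}}b \in {}^{\EUnm{A}{1}}\FUnT{p}{I} \subseteq \FUnT{p'}{I}.$$
Then the inner commutator $\bigl[{}^{c^{-1}}b,d\bigr]$ has left argument in $\FUnT{p'}{I}$ and right argument in $\FUnmJ{J}{1}$, so Lemma~\ref{Lem:left2}, applied with target level $l'$, gives
$$\bigl[{}^{c^{-1}}b,\,d\bigr] \in [\FUnT{l'}{I},\FUnTJ{l'}{J}],$$
provided $p'$ is large enough in terms of $l'$.

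Finally, I would absorb the outer conjugation by $c\in\EUnm{A}{1}$ using Lemma~\ref{Lem:cong4}: for $l'$ sufficiently large relative to $l$,
$${}^{c}[\FUnT{l'}{I},\FUnTJ{l'}{J}] \subseteq [\FUnT{l}{I},\FUnTJ{l}{J}].$$
Chaining these choices from the outside in — first fix $l$, then pick $l'$ via Lemma~\ref{Lem:cong4}, then $p'$ via Lemma~\ref{Lem:left2}, then finally $p$ via Lemma~\ref{Lem:cong3} — produces a single integer $p$ that works uniformly over all $b,c,d$, completing the proof.

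The main technical delicacy is the second step: one must pass the \emph{entire} subgroup $\FUnt{p}{I}$ (arbitrary products of elementary generators at level $(t^pI,t^p\Gamma)$), not just individual generators, through conjugation by an element of $\EUnm{A}{1}$. This is exactly why Lemma~\ref{Lem:cong3} was stated for the normal closure $\FUnT{p}{I}$ rather than for $\FU^1$ alone, so the upgrade from a single generator to a full subgroup element is automatic and does not require a separate induction. All other bookkeeping is routine parameter tracking.
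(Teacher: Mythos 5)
Your proposal is correct and follows essentially the same route as the paper's own proof: rewrite $[b,{}^cd]$ via (C4) as ${}^c[{}^{c^{-1}}b,d]$, push the conjugation on the left entry inside using the conjugation calculus (the paper quotes Lemma~\ref{Lem:cong2}, you quote the stronger Lemma~\ref{Lem:cong3}, which works equally well since $\FUnt{p}{I}\le\FUnT{p}{I}$), apply Lemma~\ref{Lem:left2} to the inner commutator, and absorb the outer conjugation with Lemma~\ref{Lem:cong4}. The parameter chaining from the outside in is exactly as in the paper.
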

\begin{proof} Let
$$ a\in \FUnt{p}{I},\qquad b\in\EUnm{A}{1},\qquad c\in \FUnmJ{J}{1}. $$
\noindent
We consider the commutator $[a,{}^bc]={}^b[{}^{b^{-1}}a,c]$. Lemma~\ref{Lem:cong2}
implies that for any $p'$  there is a sufficiently large $p$ such that
$$ {}^{b^{-1}}a\in\FUnT{p'}{I}. $$
\noindent
Therefore,
$$ [{}^{b^{-1}}a,c]\in \Big[\FUnT{p'}{I},\FUnmJ{J}{1}\Big]. $$
\noindent
By Lemma~\ref{Lem:left2}, for any $p''$ there is a sufficiently large $p'$
such that
\begin{multline*}
[{}^{b^{-1}}a,c]\in\Big[\FUnTk{p'}{I}{},\FUnmJ{J}{1}\Big]\subseteq\\
[\FUnT{p''}{I},\FUnTJ{p''}{J}].
\end{multline*}
Hence ${}^b[{}^{b^{-1}}a,c]$ belongs to
$$ {}^{\EUnm{A}{1}}[\FUnT{p''}{I}, \FUnTJ{p''}{J}]. $$
\noindent
Finally, Lemma~\ref{Lem:cong4} implies that there is a sufficient large $p''$
such that
\begin{multline*}
{}^{\EUnm{A}{1}}[\FUnT{p''}{I}, \FUnTJ{p''}{J}]\subseteq\\
[\FUnT{l}{I}, \FUnTJ{l}{J}].
\end{multline*}
This finishes the proof.
\end{proof}

In the following lemma we use the set
$\EU^K\Big(2n,\frac{J}{t^m},\frac{\Delta}{t^m}\Big)$ defined as the
set of products of $K$ or fewer elements of
${}^{\EUnm{A}{1}}\FU^1\Big(2n,\frac{J}{t^m},\frac{\Delta}{t^m}\Big)$.

\begin{Lem}\Label{Lem:-1}
Suppose $m,l,K$ are given. For any $t\in R$ there is an integer $p$ such that
\begin{align*}
\Big[\FUntk{p}{I}{1},\,&
\EU^K\Big(2n,\frac{J}{t^m},\frac{\Delta}{t^m}\Big)\Big]\\
&\subseteq\big[\FUnT{l}{I},\FUnTJ{l}{J}\big].
\end{align*}
\end{Lem}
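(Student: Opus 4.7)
The approach is induction on $K$. The base case $K = 1$ follows immediately from Lemma~\ref{Lem:right}: any element of $\EU^1(2n, J/t^m, \Delta/t^m)$ lies in ${}^{\EUnm{A}{1}}\FUnmJ{J}{1}$ by the very definition preceding the lemma, and $\FUntk{p}{I}{1} \subseteq \FUnt{p}{I}$, so the required $p$ is furnished directly by Lemma~\ref{Lem:right}.

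For $K \ge 2$, write an arbitrary $c \in \EU^K(2n, J/t^m, \Delta/t^m)$ as $c = c' c_K$, where $c' \in \EU^{K-1}(2n, J/t^m, \Delta/t^m)$ and $c_K = {}^{b_K}d_K$ with $b_K \in \EUnm{A}{1}$ and $d_K \in \FU^1(2n, J/t^m, \Delta/t^m)$. Commutator identity (C1) yields
$$[T, c] = [T, c'] \cdot {}^{c'}[T, c_K].$$
The induction hypothesis applied to $K - 1$ produces an integer $p_1$ such that whenever $T \in \FUntk{p_1}{I}{1}$ the first factor lies in $[\FUnT{l}{I}, \FUnTJ{l}{J}]$.

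For the second factor, the base case applied at a suitably large level $l'$ (to be fixed later) yields $p_2$ with $[T, c_K] \in [\FUnT{l'}{I}, \FUnTJ{l'}{J}]$. Writing each $c_i = b_i d_i b_i^{-1}$ exhibits $c'$ as a product of $3(K-1)$ elementary transvections drawn from $\FU^1(2n, A/t^m, \Lambda/t^m)$, using the inclusion $\FU^1(2n, J/t^m, \Delta/t^m) \subseteq \FU^1(2n, A/t^m, \Lambda/t^m)$. A cascade of $3(K-1)$ successive applications of Lemma~\ref{Lem:cong4}, each absorbing one such transvection and stepping the level parameter down from $l'$ toward the outer target $l$, places ${}^{c'}[T, c_K]$ inside $[\FUnT{l}{I}, \FUnTJ{l}{J}]$, provided $l'$ (and hence $p_2$) is chosen sufficiently large at the outset. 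Setting $p = \max(p_1, p_2)$ finishes the induction step, since $[\FUnT{l}{I}, \FUnTJ{l}{J}]$ is a group.

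The only obstacle is combinatorial bookkeeping: the chain of bounds must be organized by starting at the outermost target $l$ and propagating inward, with each invocation of Lemma~\ref{Lem:cong4} or of the base case prescribing how much larger the preceding level parameter must be taken. Because $K$ is fixed and the cascade has length bounded by $3(K-1) + 1$, the descent is finite and yields the required $p$.
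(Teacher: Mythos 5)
Your argument is correct and is exactly the induction the paper has in mind: its proof of this lemma is the one-line remark that it ``follows from Lemma~\ref{Lem:right} and Lemma~\ref{Lem:cong4} and identity formulas (C1), (C2) by an easy induction,'' and you have simply written that induction out --- base case from Lemma~\ref{Lem:right}, splitting off the last factor via (C1), and a finite cascade of Lemma~\ref{Lem:cong4} to absorb the conjugation by $c'$, with the level parameters chosen from the outside in. No gaps; the bookkeeping you describe is precisely what makes the induction ``easy.''
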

\begin{proof}
The lemma follows from Lemma~\ref{Lem:right} and Lemma \ref{Lem:cong4} and
identity formulas (C1), (C2)  by an  easy induction.
\end{proof}


\section{Mixed Commutator Formula: localisation proof}

In this section we continue to assume that $n\ge 3$, $R$ is a
commutative ring, $(A,\Lambda)$ is a form ring such that $A$ is
a module-finite $R$-algebra, and, finally, $(I,\Gamma)$ and
$(J,\Delta)$ are two form ideals of $(\Form)$.
\par
So far all calculations were taking place in the elementary group
$\EU(2n,A_t,\Lambda_t)$. Now we start to pull them back to the
group $\GU(2n,A,\Lambda)$. The key ingredient is Lemma~\ref{Lem:03},
which asserts that for a suitable positive integer $l$ restriction
$$ F_t:\GU(2n,t^lA,t^l\Lambda)\rightarrow\GU(2n,A_t,\Lambda_t), $$
\noindent
of the localisation homomorphism $F_t$ to the 
congruence subgroup $\GU(2n,t^lA,t^l\Lambda)$ is injective.
\par
Recall, that the functors $\EU_{2n}$ and $\GU_{2n}$ commute with direct
limits. By \S\ref{sub:1.3}, proofs of the following results are reduced
to the case, where $A$ is finite over $R_0$ and $R_0$ itself is Noetherian.
\begin{Lem}\Label{Lem:08}
Let $\gm\in\Max(R_0)$ be a maximal ideal of $R_0$. For any
$g\in\GU(2n,J,\Delta)$, there exists a  $t\in R_0\backslash \gm$ and
an integer $p$, such that
$$ [e,g]\in\big[\EU(2n,I,\Gamma),\EU(2n,J,\Delta)\big], $$
where $e \in \FUntk{p}{I}{1}$. (Here $p$ depends on the choice of $e$.)
\end{Lem}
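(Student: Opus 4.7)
The plan is to push the computation into the localisation $\GU(2n,A_t,\Lambda_t)$, apply the relative commutator calculus of Section~\ref{ghgh}, and pull the result back to the absolute group via the injectivity of $F_t$ on small enough congruence subgroups (Lemma~\ref{Lem:03}).

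First, I would invoke the direct-limit reduction of Section~\ref{sub:1.3} to assume that $A$ is module-finite over a Noetherian ring $R_0$: the single matrix $g$ involves finitely many entries and hence lies in some module-finite subalgebra of the directed system expressing $(\Form)$, and the conclusion of the lemma is preserved under the limit. This is necessary because Lemma~\ref{Lem:03} (the injectivity statement) requires Noetherianity.

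Second, I would analyse $F_\gm(g)\in\GU(2n,J_\gm,\Delta_\gm)$ in the localisation. Because $g$ is a single element, its denominators in the image are bounded, so we can already realise $F_\gm(g)$ at a principal localisation $F_t$ for some $t\in R_0\setminus\gm$. The key structural step is to exhibit integers $m,K$ and an element $h\in\EU^K\bigl(2n,J/t^m,\Delta/t^m\bigr)$ (the set introduced just before Lemma~\ref{Lem:-1}) with $F_t(g)=h$ in $\GU(2n,A_t,\Lambda_t)$. This amounts to writing the localised $\GU$-element as a product of $\EU(2n,A_t,\Lambda_t)$-conjugates of elementary transvections of level $(J,\Delta)$ with denominators $t^m$, and it rests on the local structure of $R_{0,\gm}$ together with the classical fact that, after inverting a suitable element outside $\gm$, relative congruence subgroups are generated by such elementary transvections. \emph{This is the main obstacle} of the proof.

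Third, I would invoke Lemma~\ref{Lem:-1} with the chosen $m,K$ and a target exponent $l$ to be fixed later: there exists $p$ such that for every $e\in\FUntk{p}{I}{1}$
\[
[F_t(e),\,h]\ \in\ \bigl[\FUnT{l}{I},\,\FUnTJ{l}{J}\bigr].
\]
Every generator $[x,y]$ on the right-hand side is the $F_t$-image of an honest commutator $[x',y']$ with $x'\in\EU(2n,I,\Gamma)\cap\GU(2n,t^lA,t^l\Lambda)$ and $y'\in\EU(2n,J,\Delta)\cap\GU(2n,t^lA,t^l\Lambda)$, because the defining transvections have entries in $t^lA$ and $t^l\Lambda$. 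Multiplying such lifts we produce
\[
u\ \in\ \bigl[\EU(2n,I,\Gamma),\,\EU(2n,J,\Delta)\bigr]\cap\GU(2n,t^lA,t^l\Lambda)
\]
with $F_t(u)=F_t([e,g])$.

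Finally, I would conclude by injectivity. Since $e\in\FUntk{p}{I}{1}$ satisfies $e\equiv 1\pmod{t^p}$, a direct matrix computation gives $[e,g]\equiv 1\pmod{t^p}$, so $[e,g]\in\GU(2n,t^lA,t^l\Lambda)$ whenever $p\ge l$. Choosing $l$ large enough so that $F_t$ restricted to $\GU(2n,t^lA,t^l\Lambda)$ is injective (Lemma~\ref{Lem:03}), we deduce $[e,g]=u\in\bigl[\EU(2n,I,\Gamma),\EU(2n,J,\Delta)\bigr]$, which is the desired conclusion.
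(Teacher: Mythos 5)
Your overall architecture --- reduce to the module-finite Noetherian case, decompose $F_t(g)$ in the localisation, apply Lemma~\ref{Lem:-1}, lift the commutators, and conclude via normality of $\GU(2n,t^lA,t^l\Lambda)$ and the injectivity of $F_t$ from Lemma~\ref{Lem:03} --- matches the paper's proof. But the step you yourself flag as ``the main obstacle'' contains a genuine error: you assert that $F_t(g)$ can be written as an element $h\in\EU^K\bigl(2n,\frac{J}{t^m},\frac{\Delta}{t^m}\bigr)$, i.e.\ that after inverting a suitable $t\notin\gm$ the relative congruence subgroup becomes elementary. There is no such ``classical fact'': even over a semilocal ring the quotient $\GU(2n,J_\gm,\Delta_\gm)/\EU(2n,J_\gm,\Delta_\gm)$ is a relative unitary $K_1$-group which is nonzero in general (already for $\GL$ it is the group of units congruent to $1$ modulo the ideal). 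So the localised element $F_\gm(g)$ is \emph{not} elementary, and your second step fails as stated.

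What the paper actually uses is surjective stability: since $A_\gm$ is module finite over the local ring $R_\gm$, it is semilocal of stable rank $1$, whence
$\GU(2n,J_\gm,\Delta_\gm)=\EU(2n,J_\gm,\Delta_\gm)\cdot\GU(2,J_\gm,\Delta_\gm)$.
One writes $F_\gm(g)=\ep h$ with $\ep$ elementary and $h$ a residual $2\times2$ block sitting in a hyperbolic plane, and this embedding can be moved (modulo the elementary group) to a hyperbolic plane disjoint from the support of the single transvection $e\in\FU^1(2n,t^pI,t^p\Gamma)$ --- here $n\ge3$ is essential. Then $[F_t(e),h]=1$, so by (C1) the commutator $[F_t(e),F_t(g)]$ collapses to $[F_t(e),\ep]$, and only the elementary factor $\ep\in\EU^K\bigl(2n,\frac{J}{t^m},\frac{\Delta}{t^m}\bigr)$ enters Lemma~\ref{Lem:-1}. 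This is also why the lemma is stated for a single generator $e$ with $p$ depending on $e$, rather than for all of $\EU(2n,I,\Gamma)$ at once. Your remaining steps (lifting the commutators, observing $[e,g]\in\GU(2n,t^lA,t^l\Lambda)$ by normality of the principal congruence subgroup, and invoking injectivity) are correct and agree with the paper.
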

\begin{proof}
For any maximal ideal $\gm\in\Max(R_0)$, the form ring
$(A_\gm,\Lambda_\gm)$ contains $(J_\gm,\Delta_\gm)$ as a form ideal.
Consider the localisation homomorphism $F_\gm:A\rightarrow A_\gm$ which induces
homomorphisms on the level of unitary groups,
$$ F_\gm:\GU(2n,A,\Lambda)\rightarrow\GU(2n,A_\gm,\Lambda_\gm), $$
\noindent
and
$$ F_\gm:\GU(2n,J,\Delta)\rightarrow\GU(2n,J_\gm,\Delta_\gm). $$
\par
Therefore, for $g\in\GU(2n,\FidealJ{J})$,
$F_\gm(g)\in\GU(2n,J_\gm,\Delta_\gm)$.
Since $A_\gm$ is module finite over the local ring $R_\gm$, $A_\gm$ is semi-local
\cite[III(2.5), (2.11)]{Bass1}, therefore its stable rank is $1$. It follows by
(see \cite[9.1.4]{HO}) that,
$$ \GU(2n,J_\gm,\Delta_\gm)=\EU(2n,J_\gm,\Delta_\gm)\GU(2,J_\gm,\Delta_\gm). $$
\par
Thus, $F_\gm(g)$ can be decomposed as $F_\gm(g)=\ep h$, where
$\ep\in\EU(2n,J_\gm,\Delta_\gm)$ and
$h\in\GU(2,J_\gm,\Delta_\gm)$ is a
$2\times2$ matrix embedded in $\GU(2n,J_\gm,\Delta_\gm)$ and this
embedding can be arranged modulo $\EU(2n,J_\gm,\Delta_\gm)$.
\par
Now, by (\ref{sub:1.3}), we may reduce the problem to the case $A_t$ with
$t\in R_0\backslash \gm$. Namely, $F_t(g)$ is a product of $\ep$ and $h$,
where $\ep\in\EU(2n,J_t,\Delta_t)$, and $h\in\GL(2,J_t,\Delta_t)$.
\par
Therefore $\ep$ is a product of the elementary matrices, thus one has (see~\cite[Prop. 5.1]{BV3})
$$ \ep\in\EU^K\Big(2n,\frac{J}{t^m},\frac{\Delta}{t^m}\Big). $$
\par
Let $e\in\FUntk{p}{I}{1}$. We choose $h$ such that it commutes with
$F_t(e)$. By Lemma~\ref{Lem:-1}, for any given $l$, there is a
sufficiently large $p$ such that
\begin{equation}\label{hmhm}
[F_t(e),F_t(g)]=[F_t(e),\ep]\in[\FUnT{l}{I},\FUnTJ{l}{J}].
\end{equation}
Since $e\in\EU(2n,t^pI,t^p\Gamma)\le\GU(2n,t^lA,t^l\Lambda)$  and
$\GU(2n,t^lA,t^l\Lambda)$ is normal in $\GU(2n,A,\Lambda)$, it follows
$[e,g]\in\GU(2n,t^lA,t^l\Lambda)$. On the other hand, using~(\ref{hmhm}),
one can find
$$ x\in[\FUnT{l}{I},\FUnTJ{l}{J}] $$
\noindent
in $\EU(2n,A,\Lambda)$ such that $F_t(x)=[F_t(e),F_t(g)]$.
Since for suitable $l$, the restriction of $F_t$ to $\GL_n(t^lA,t^l\Lambda)$
is injective by Lemma~\ref{Lem:03}, it follows $[e,g]=x$ and thus
$$ [e,g]\in[\EU(2n,\Fideal{I}), \EU(2n,\FidealJ{J})].$$
\end{proof}
Now, we are prepared to patch the local data. The following lemma
is a key step in the proof of Theorem~1, after that the proof is
finished by an easy induction.
\begin{Lem}\Label{Lem:09}
One has
\begin{equation}\label{popt}
[\FU^1(2n,I,\Gamma),\GU(2n,J,\Delta)]\subseteq
[\EU(2n,I,\Gamma),\EU(2n,J,\Delta)].
\end{equation}
\end{Lem}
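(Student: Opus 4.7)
The plan is a standard localization--patching assembly, with Lemma~\ref{Lem:08} as the local input. Since $[\EU(2n,I,\Gamma),\EU(2n,J,\Delta)]$ is normal in $\EU(2n,A,\Lambda)$ -- it is the commutator of two subgroups that are each normal there -- and since $[xy,g]$ splits into a conjugate of $[y,g]$ times $[x,g]$ by identity (C2), it suffices to show $[T_{ij}(\xi),g]\in[\EU(2n,I,\Gamma),\EU(2n,J,\Delta)]$ for a single generator $T_{ij}(\xi)\in\FU^1(2n,I,\Gamma)$ and an arbitrary $g\in\GU(2n,J,\Delta)$.

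Fix such a $T_{ij}(\xi)$ and $g$. For each maximal ideal $\mathfrak{m}\in\Max(R_0)$, Lemma~\ref{Lem:08} supplies $t_\mathfrak{m}\in R_0\setminus\mathfrak{m}$ and an integer $p_\mathfrak{m}$ such that $[e,g]\in[\EU(2n,I,\Gamma),\EU(2n,J,\Delta)]$ whenever $e\in\FU^1(2n,t_\mathfrak{m}^{p_\mathfrak{m}}I,t_\mathfrak{m}^{p_\mathfrak{m}}\Gamma)$. Writing $s_\mathfrak{m}=t_\mathfrak{m}^{p_\mathfrak{m}}$, the family $\{s_\mathfrak{m}\}_\mathfrak{m}$ generates the unit ideal of $R_0$, since none of its elements lies in any single maximal ideal; so finitely many of them, say $s_1,\dots,s_k$, admit a partition of unity $\sum_{i=1}^k r_is_i=1$ with $r_i\in R_0$.

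Decompose $\xi=\sum_{i=1}^k(r_is_i)\xi$. Since $I$ and $\Gamma$ are $R_0$-modules (the reason $R$ is replaced by $R_0$ in \S\ref{quasi-finite}), each $r_is_i\xi$ parametrizes a valid transvection $T_{ij}(r_is_i\xi)\in\FU^1(2n,s_iI,s_i\Gamma)$, including when $i=-j$ where one needs $r_is_i\Gamma\subseteq s_i\Gamma$. Relation (R2) then gives
$$T_{ij}(\xi)=\prod_{i=1}^k T_{ij}(r_is_i\xi),$$
and iterating (C2) rewrites $[T_{ij}(\xi),g]$ as a product of conjugates ${}^{y_i}[T_{ij}(r_is_i\xi),g]$, where each $y_i$ is a product of earlier factors and hence lies in $\FU(2n,I,\Gamma)\subseteq\EU(2n,I,\Gamma)$. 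By the choice of the $s_i$, Lemma~\ref{Lem:08} places every inner commutator in the target subgroup, and normality under $\EU(2n,I,\Gamma)$ preserves this after conjugation by each $y_i$; assembling the product then yields the claim.

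The main subtlety is the long-root bookkeeping in the decomposition step, which is handled by the $R_0$-module structure of $\Gamma$ and explains why the whole localization argument is forced to take place in $R_0$ rather than in $R$. Everything else is standard, because the genuinely technical work has already been discharged by the relative conjugation and commutator calculus of \S\S\ref{ghg}--\ref{ghgh} and by Lemma~\ref{Lem:08}. In particular no further induction on $p$ is needed: the $p_\mathfrak{m}$ are chosen once per maximal ideal, and the partition of unity lives purely in $R_0$.
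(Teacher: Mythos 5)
Your proposal is correct and follows essentially the same route as the paper: choose, for each maximal ideal of $R_0$, the element $t_{\gm}$ and exponent $p_{\gm}$ from Lemma~\ref{Lem:08}, extract a finite partition of unity $\sum r_is_i=1$ in $R_0$, split the transvection by additivity (R2), and reassemble the commutator with (C2) using normality of $\EU(2n,I,\Gamma)$ and $\EU(2n,J,\Delta)$ in $\EU(2n,A,\Lambda)$. The remark about the $R_0$-module structure of $\Gamma$ handling the long-root case is a correct (and slightly more explicit) version of what the paper leaves implicit.
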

\begin{proof}
Let $T_{hk}(\alpha)\in\FU^1(2n,\Fideal{I})$, and $g\in\GU(2n,\Fideal{J})$.
For any maximal ideal $\gm_i\lhd R_0$, choose a $ t_i \in R_0\backslash\gm_i$
and a positive integer $p_i$ according to Lemma~\ref{Lem:08}.
Since the collection of all $t_i^{p_i}$ is not contained in any maximal ideal,
we may find a finite number of $t_i$ and $x_i\in R_0$ such that
$$ \sum_{i}t_i^{p_i}x_i=1. $$
\noindent
We have,
$$ T_{hk}(\alpha)=T_{hk}\Big(\sum_{i}t_i^{p_i}x_i\cdot\alpha\Big)=
\prod_{i}T_{hk}\big(t_i^{p_i}x_i\alpha\big). $$
\noindent
By Lemma~\ref{Lem:08}, it follows immediately that for each $i$,
\begin{multline}\label{kik}
[T_{hk}(t_i^{p_i}x_i\alpha),g]\in
\big[\EU(2n,t_i^{l}I,t_i^{l}\Gamma),\EU(2n,t_i^{l}J,t_i^{l}\Delta)\big]\le\\
\big[\EU(2n,\Fideal{I}),\EU(2n,J,\Delta)\big].
\end{multline}
A direct computation using (\ref{kik}) and Formula (C2) and the fact that
$\EU(2n,\Fideal{I})$ and $\EU(2n,J,\Delta)$ are normal in $\EU(2n,\Form)$,
shows that
$$ [T_{hk}(\alpha),g]=\Big[\prod_{i}T_{hk}(t_i^{p_i}x_i\alpha),g\Big]\in
\big[\EU(2n,\Fideal{I}),\EU(2n,J,\Delta)\big], $$
\noindent
as claimed
\end{proof}
Now we are in a position to finish the proof of Theorem 1.
\begin{proof}[First proof of Theorem $1$]
Since $\FU(n,I,\Gamma)$ is generated by $\FU^1(2n,I,\Gamma)$
whereas $\EU(2n,I,\Gamma)$ and $\EU(2n,J,\Delta)$ are normalised
by $\EU(2n,\Form)$, repeated use of~(\ref{popt}) along with Formula
(C2), gives inclusion
$$ [\FU(2n,I,\Gamma),\GU(2n,J,\Delta)]\le
[\EU(2n,I,\Gamma),\EU(2n,J,\Delta)]. $$
\par
Since $\EU(2n,I,\Gamma)$ is the normal closure of $\FU(2n,I,\Gamma)$
in $\EU(2n,\Form)$, while both $\GU(2n,J,\Delta)$ and
the right-hand side of the above formula are normalised
by $\EU(2n,\Form)$, we get the inclusion
$$ [\EU(2n,I,\Gamma),\GU(2n,J,\Delta)]\le
[\EU(2n,I,\Gamma),\EU(2n,J,\Delta)]. $$
\noindent
The opposite inclusion is obvious.
\end{proof}


\section{Level of the mixed commutators}
\label{level}

In this section we calculate lower and upper levels of
mixed commutators
$$ [\EU(2n,I,\Gamma),\EU(2n,J,\Delta)]. $$
\begin{Lem}
Let\/ $n\ge 2$. Then for any two form ideals  $(I,\Gamma)$ and\/ $(J,\Delta)$
of the form ring\/ $(A,\Lambda)$ one has
$$ \EU(2n,I,\Gamma)\EU(2n,J,\Delta)=\EU(2n,I+J,\Gamma+\Delta). $$
\end{Lem}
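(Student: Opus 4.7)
The plan is to prove the two inclusions separately, with the nontrivial direction reduced to a normality argument. The easy inclusion $\EU(2n,I,\Gamma)\EU(2n,J,\Delta)\subseteq\EU(2n,I+J,\Gamma+\Delta)$ is immediate: both factors on the left are contained in the relative elementary subgroup of the larger form ideal $(I+J,\Gamma+\Delta)$, so their product is as well.

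For the reverse inclusion, the first step is to observe that the set $H:=\EU(2n,I,\Gamma)\EU(2n,J,\Delta)$ is in fact a subgroup of $\EU(2n,A,\Lambda)$, and is itself normal in $\EU(2n,A,\Lambda)$. This follows directly from the definition of $\EU(2n,I,\Gamma)$ and $\EU(2n,J,\Delta)$ as normal closures in $\EU(2n,A,\Lambda)$: for any $x\in\EU(2n,I,\Gamma)$ and $y\in\EU(2n,J,\Delta)$ one has $yx=(yxy^{-1})y\in\EU(2n,I,\Gamma)\EU(2n,J,\Delta)$, so the product of the two factors is closed under multiplication (and trivially under inversion, after a further conjugation).

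Next I would show that every generator of $\FU(2n,I+J,\Gamma+\Delta)$ lies in $H$. For a short root generator $T_{ij}(\xi)$ with $\xi\in I+J$, write $\xi=\eta+\zeta$ with $\eta\in I$, $\zeta\in J$, and apply relation (R2) to get
\[
T_{ij}(\xi)=T_{ij}(\eta)T_{ij}(\zeta)\in\FU(2n,I,\Gamma)\FU(2n,J,\Delta)\subseteq H.
\]
For a long root generator $T_{i,-i}(\alpha)$ with $\alpha\in\lambda^{-(\ep(i)+1)/2}(\Gamma+\Delta)$, decompose $\alpha=\beta+\gamma$ with $\beta\in\lambda^{-(\ep(i)+1)/2}\Gamma$ and $\gamma\in\lambda^{-(\ep(i)+1)/2}\Delta$ and again apply (R2). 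This gives $\FU(2n,I+J,\Gamma+\Delta)\subseteq H$.

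Finally, since $\EU(2n,I+J,\Gamma+\Delta)$ is by definition the normal closure of $\FU(2n,I+J,\Gamma+\Delta)$ in $\EU(2n,A,\Lambda)$, and since $H$ already contains this generating set and is normal in $\EU(2n,A,\Lambda)$ by the first step, we conclude $\EU(2n,I+J,\Gamma+\Delta)\subseteq H$, which is the desired inclusion. There is no real obstacle here; the only point worth emphasising is that the argument is purely formal, using additivity of the root parametrisations and the normality of the two relative elementary subgroups, and does not require any of the heavier localisation machinery developed earlier in the paper.
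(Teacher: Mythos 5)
Your proof is correct and follows essentially the same route as the paper's: additivity of the transvection parametrisations (R2) puts the generators of $\FU(2n,I+J,\Gamma+\Delta)$ into the product, and the product of the two normal subgroups is itself normal in $\EU(2n,A,\Lambda)$, hence contains the normal closure. You have merely written out in full the two sentences the paper compresses this into.
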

\begin{proof}
Additivity of the elementary unitary transvections
$T_{ij}(\a+\b)=T_{ij}(\a)T_{ij}(\b)$, where $i,j \in \Omega$ and $i \not = j$,
while $\a\in I$, $\b\in J$ for $i\neq-j$ and
$\a\in\Gamma$, $\b\in\Delta$ for $i=-j$, implies that the left
hand side contains {\it generators\/} of the right hand side.
The product of two normal subgroups is normal in $\EU(2n,A,\Lambda)$.
\end{proof}
As a preparation to the calculation of lower level, let us observe
that together with \cite[Theorem 2.3]{RH} this lemma implies the
following corollary. Observe that in its turn the proof of
\cite[Theorem 2.3]{RH} heavily depends on Lemma~\ref{genelm}.
\begin{Lem}\label{nllnnn}
Let $n\ge 3$ and further let $(I,\Gamma)$ and $(J,\Delta)$ be two
form ideals of $(A,\Lambda)$. Then
$$ \EU(2n,IJ+JI,{}^J\Gamma+{}^I\Delta+\Gamma_{\min}(IJ+JI))\le
\FU(2n,I+J,\Gamma+\Delta). $$
\end{Lem}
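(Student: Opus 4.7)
The plan is to extract the corollary directly from \cite[Theorem~2.3]{RH}, which supplies the key decomposition, combined with the product-of-levels formula of the preceding lemma and the generating-set description of Lemma~\ref{genelm}. The essence is to pass from the normal closure $\EU$ on the left to the un-closed subgroup $\FU$ on the right; this is really a statement about how the generators of the left-hand side can be re-expressed.

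First I would verify the routine level-compatibility inclusion of form ideals $(IJ+JI,\ {}^J\Gamma+{}^I\Delta+\Gamma_{\min}(IJ+JI))\subseteq(I+J,\Gamma+\Delta)$. The ideal component is immediate from $IJ,JI\subseteq I\cap J\subseteq I+J$. For the form-parameter component, the axiom $\alpha\Gamma\bar\alpha\subseteq\Gamma$ applied to $\alpha\in J\subseteq A$ gives ${}^J\Gamma\subseteq\Gamma$, symmetrically ${}^I\Delta\subseteq\Delta$, while monotonicity of $\Gamma_{\min}(\cdot)$ together with the form-ideal property of $(I+J,\Gamma+\Delta)$ gives $\Gamma_{\min}(IJ+JI)\subseteq\Gamma_{\min}(I+J)\subseteq\Gamma+\Delta$. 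This alone shows that each \emph{individual} generating elementary transvection of the left-hand side is a generator of $\FU(2n,I+J,\Gamma+\Delta)$, and that both $\FU(2n,I,\Gamma)$ and $\FU(2n,J,\Delta)$ sit as subgroups inside $\FU(2n,I+J,\Gamma+\Delta)$.

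Next, since $n\ge 3$, Lemma~\ref{genelm} reduces the task to showing that every generator $Z_{ij}(\xi,\zeta)=T_{ji}(\zeta)T_{ij}(\xi)T_{ji}(-\zeta)$ of the left-hand side belongs to $\FU(2n,I+J,\Gamma+\Delta)$. The core content of \cite[Theorem~2.3]{RH} is precisely that such a $Z_{ij}(\xi,\zeta)$ decomposes, via the Steinberg relations (R1)--(R6), into a product of root unipotents each drawn from $\FU(2n,I,\Gamma)$ or $\FU(2n,J,\Delta)$. For the model short-root case $\xi=ab$ with $a\in I$, $b\in J$, $i\neq\pm j$, one uses $n\ge 3$ to pick $h\neq\pm i,\pm j$ and writes $T_{ij}(ab)=[T_{ih}(a),T_{hj}(b)]$; conjugation by $T_{ji}(\zeta)$ distributes over the commutator, and (R4) expands ${}^{T_{ji}(\zeta)}T_{ih}(a)=T_{jh}(\zeta a)T_{ih}(a)\in\FU(2n,I,\Gamma)$ and symmetrically ${}^{T_{ji}(\zeta)}T_{hj}(b)\in\FU(2n,J,\Delta)$, because $\zeta a\in\Lambda\cdot I\subseteq I$ and $b\zeta\in J\cdot\Lambda\subseteq J$.

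Since all such factors lie in the larger subgroup $\FU(2n,I+J,\Gamma+\Delta)$, so does their product, and the corollary follows. The step I expect to be hardest --- and exactly where \cite[Theorem~2.3]{RH} does its real technical work --- is the long-root case $i=-j$ with $\xi\in{}^J\Gamma+{}^I\Delta+\Gamma_{\min}(IJ+JI)$: here the Chevalley commutator formula (R6) introduces an auxiliary long-root correction $T_{-j,j}(\bar\zeta\xi\zeta)$, whose level must be tracked carefully to land in $\FU(2n,I,\Gamma)\cdot\FU(2n,J,\Delta)$ after further Steinberg expansion. This case analysis is exactly what is absorbed into the citation, so my proof would terminate at that point with a reference to \cite[Theorem~2.3]{RH}.
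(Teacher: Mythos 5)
Your proposal is correct and follows essentially the same route as the paper: both rest on \cite[Theorem~2.3]{RH} for the core decomposition of the generators (the paper explicitly notes that the proof of that theorem depends on Lemma~\ref{genelm}, which is exactly the reduction you make), with the only cosmetic difference being that the paper first splits $\EU(2n,IJ+JI,\dots)$ into the product of $\EU(2n,IJ,\dots)$ and $\EU(2n,JI,\dots)$ via the preceding sum lemma before invoking the citation, whereas you absorb the symmetrisation into the generator-by-generator argument. Your explicit treatment of the short-root case and the level-compatibility check are consistent with, and slightly more detailed than, what the paper records.
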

\begin{proof} In~\cite[Theorem 2.3]{RH} this Lemma is proved the case that $IJ=JI$.
The similar proof shows that elements of the form
$$ \EU(2n,IJ,{}^J\Gamma+{}^I\Delta+\Gamma_{\min}(IJ)),\qquad
\EU(2n,JI,{}^J\Gamma+{}^I\Delta+\Gamma_{\min}(JI)), $$
\noindent
are contained in $\FU(2n,I+J,\Gamma+\Delta)$. By the previous lemma,
the group on the left hand side is their product.
\end{proof}

In the next lemma we calculate the {\it lower\/} level of the mixed
commutator subgroup.
\begin{Lem}\label{nlln}
Let\/ $n\ge 3$. Then for any two for ideals $(I,\Gamma)$ and\/ $(J,\Delta)$
of the form ring\/ $(A,\Lambda)$ one has the following inclusions
$$ \EU(2n,IJ+JI,{}^J\Gamma+{}^I\Delta+\Gamma_{\min}(IJ+JI)) \le
[\EU(2n,I,\Gamma),\EU(2n,J,\Delta)]. $$
\end{Lem}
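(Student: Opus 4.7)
The plan is to show that the three natural families of elementary generators of $\EU(2n,IJ+JI,\Theta)$, where $\Theta={}^J\Gamma+{}^I\Delta+\Gamma_{\min}(IJ+JI)$, all lie in the commutator $C:=[\EU(2n,I,\Gamma),\EU(2n,J,\Delta)]$. Since $\EU(2n,I,\Gamma)$ and $\EU(2n,J,\Delta)$ are both normal in $\EU(2n,\Form)$, so is $C$; hence $C$ automatically contains the normal closure in $\EU(2n,\Form)$ of any subset of $C$. Consequently it suffices to prove that the \emph{unrelativised} subgroup $\FU(2n,IJ+JI,\Theta)$ is contained in $C$, and for this it is enough, by additivity of the transvections $T_{ij}$, to check the statement on the natural generators $T_{ij}(\xi\eta)$, $T_{ij}(\eta\xi)$, $T_{i,-i}(\mu-\lambda^{*}\bar\mu)$, $T_{i,-i}(\eta\gamma\bar\eta)$, $T_{i,-i}(\xi\delta\bar\xi)$ and $T_{i,-i}(\kappa\alpha\bar\kappa)$.

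For the short root generators I would use the Chevalley commutator formula (R4): since $n\ge 3$, for every pair $i\ne\pm j$ there is $h\in\Omega$ with $h\ne\pm i,\pm j$, and then
\[
T_{ih}(\xi\eta)=[T_{ij}(\xi),T_{jh}(\eta)]\in[\FU(2n,I,\Gamma),\FU(2n,J,\Delta)]\subseteq C,
\]
with $\xi\in I$, $\eta\in J$; swapping the roles of the factors gives the analogous formula for $\eta\xi\in JI$. For the long root generators coming from the $\mu-\lambda\bar\mu$ part of $\Gamma_{\min}(IJ+JI)$ I would use (R5):
\[
[T_{ij}(\xi),T_{j,-i}(\eta)]=T_{i,-i}\bigl(\xi\eta-\lambda^{-\epsilon(i)}\bar\eta\bar\xi\bigr),
\]
which, after additivity and role-swapping, hits every $T_{i,-i}(\mu-\lambda^{*}\bar\mu)$ with $\mu\in IJ+JI$. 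For the ${}^J\Gamma$ and ${}^I\Delta$ long root generators I would use (R6): with $\gamma\in\Gamma$ and $\eta\in J$,
\[
[T_{i,-i}(\gamma),T_{-i,j}(\eta)]=T_{ij}(\gamma\eta)\cdot T_{-j,j}\bigl(-\lambda^{(\epsilon(j)-\epsilon(i))/2}\bar\eta\gamma\eta\bigr),
\]
so the first factor already lies in $C$ by the short root case, and therefore the long root factor $T_{-j,j}(\bar\eta\gamma\eta\cdot(\text{sign}))$ does too, yielding every generator $\eta'\gamma\bar{\eta'}$ of ${}^J\Gamma$ (with $\eta'=\bar\eta\in J$). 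The ${}^I\Delta$ case is symmetric.

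The only remaining generators are the Bak symmetrised squares $\kappa\alpha\bar\kappa$ with $\kappa\in IJ+JI$ and $\alpha\in\Lambda$. Here I would split $\kappa=\mu_1+\mu_2$ with $\mu_1\in IJ$, $\mu_2\in JI$ and expand
\[
\kappa\alpha\bar\kappa=\mu_1\alpha\bar\mu_1+\mu_2\alpha\bar\mu_2+\bigl(\mu_1\alpha\bar\mu_2+\mu_2\alpha\bar\mu_1\bigr).
\]
Decomposing $\mu_1=\sum\xi_k\eta_k$ with $\xi_k\in I$, $\eta_k\in J$ gives diagonal summands of the form $\xi(\eta\alpha\bar\eta)\bar\xi$; since $\eta\alpha\bar\eta\in\Gamma_{\min}(J)\subseteq\Delta$, these lie in ${}^I\Delta$ and are handled above. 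The $\mu_2\alpha\bar\mu_2$ terms land in ${}^J\Gamma$ symmetrically. For the cross terms, using $\bar\alpha=-\bar\lambda\alpha$ gives $\mu_2\alpha\bar\mu_1=-\lambda\overline{\mu_1\alpha\bar\mu_2}$, so
\[
\mu_1\alpha\bar\mu_2+\mu_2\alpha\bar\mu_1=\nu-\lambda\bar\nu,\qquad \nu=\mu_1\alpha\bar\mu_2\in IJ\cdot\Lambda\cdot IJ\subseteq IJ+JI,
\]
which is already in the first $\Gamma_{\min}$-family. I expect this bookkeeping of the quadratic piece $\kappa\alpha\bar\kappa$ of $\Gamma_{\min}(IJ+JI)$ to be the main obstacle, because it is where the non-additivity of the form parameter and the asymmetry between $IJ$ and $JI$ genuinely interact; the rest of the argument is a direct, case-by-case application of the Steinberg relations (R4)--(R6) together with additivity and normality.
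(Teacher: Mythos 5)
Your proposal is correct and shares the paper's basic strategy: reduce to the elementary generators of $\FU(2n,IJ+JI,{}^J\Gamma+{}^I\Delta+\Gamma_{\min}(IJ+JI))$ by using normality of the mutual commutator subgroup in $\EU(2n,\Form)$, and then produce each generator from the Steinberg relations. Your short-root case via (R4), and your treatment of the ${}^J\Gamma$ and ${}^I\Delta$ generators via (R6) (peeling off the short-root factor, which is already known to lie in the commutator), are exactly the paper's computations. Where you genuinely diverge is the $\Gamma_{\min}(IJ+JI)$ part: the paper does not compute these generators at all, but instead appeals to ``a similar argument as in Lemma~\ref{nllnnn}'', i.e.\ ultimately to \cite[Theorem~2.3]{RH} and the generation result of Lemma~\ref{genelm}, to absorb $\EU(2n,IJ+JI,\Gamma_{\min}(IJ+JI))$ into the normal subgroup generated by the transvections $T_{ij}(\a\b)$ and $T_{ij}(\b\a)$. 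Your direct route --- (R5) for the $\{\mu-\lambda\bar\mu\}$ part and an explicit expansion of $\kappa\alpha\bar\kappa$ for the quadratic part --- is self-contained and arguably more transparent, at the price of doing the form-parameter bookkeeping yourself. On that score one small step is missing: writing $\mu_1=\sum_k\xi_k\eta_k$ with $\xi_k\in I$, $\eta_k\in J$, the expansion of $\mu_1\alpha\bar\mu_1$ produces not only the diagonal terms $\xi_k(\eta_k\alpha\bar\eta_k)\bar\xi_k\in{}^I\Delta$ but also off-diagonal terms $\xi_k\eta_k\alpha\bar\eta_l\bar\xi_l$ with $k\ne l$; these are disposed of by the same trick you already use for the $\mu_1,\mu_2$ cross terms, since $\xi_k\eta_k\alpha\bar\eta_l\bar\xi_l+\xi_l\eta_l\alpha\bar\eta_k\bar\xi_k=\nu-\lambda\bar\nu$ with $\nu=\xi_k\eta_k\alpha\bar\eta_l\bar\xi_l\in IJ$. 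With that line added, your argument is complete.
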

\begin{proof}
Let $i\neq j$. Take an arbitrary index $h\neq\pm i,\pm j$.
Then the right hand side contains all elementary transvections of
the form
$$ T_{ij}(\a\b)=[T_{ih}(\a),T_{hj}(\b)]\qquad \text{and}\qquad
T_{ij}(\b\a)=[T_{ih}(\b),T_{hj}(\a)], $$
\noindent
for all $\a\in I$, $\b\in J$.
\par
Moreover, being the mutual commutator of two normal subgroups
it is normal in the absolute elementary group $\EU(n,A,\Lambda)$.
Thus, a similar argument as in Lemma~\ref{nllnnn} shows that
$$ \EU(n,IJ+JI,\Gamma_{\min}(IJ+JI))\le [\EU(2n,I,\Gamma),\EU(2n,J,\Delta)]. $$

\par
Furthermore, the right hand side contains
$$ T_{i,-i}(\lambda^{(\epsilon(j)-\epsilon(i))/2}\b\a\b)=
T_{i,-j}(-\b\a)[T_{i,j}(\b),T_{j,-j}(\a)] $$
\noindent
and
$$ T_{i,-i}(\lambda^{(\epsilon(j)-\epsilon(i))/2}\a\b\a)=
T_{i,-j}(-\a\b)[T_{i,j}(\a),T_{j,-j}(\b)]. $$
\noindent
It immediately follows that
$$ \EU(2n,IJ+JI,{}^J\Gamma+{}^I\Delta+\Gamma_{\min}(IJ+JI))\le
[\EU(2n,I,\Gamma),\EU(2n,J,\Delta)]. $$
\end{proof}

Observe, that the above calculation crucially depended on the fact
that $n\ge 3$ and we do not know how to estimate the lower level
for $n=2$ without some strong additional assumptions on the ring $A$.
In the following lemmas we estimate the {\it upper\/} level.
\begin{Lem}\label{GUGU}
Let\/ $n\ge 2$. Then for any two form ideals $(I,\Gamma)$ and\/ $(J,\Delta)$
of the form ring\/ $(A,\Lambda)$ one has the following inclusion
$$ [\GU(2n,I,\Gamma),\GU(2n,J,\Delta)]\le\GU(2n,IJ+JI,\Gamma_{\max}(IJ+JI)). $$
\end{Lem}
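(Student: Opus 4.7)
The plan is direct verification: show that for any $g\in\GU(2n,I,\Gamma)$ and $h\in\GU(2n,J,\Delta)$, the commutator $[g,h]$ satisfies the two defining conditions of $\GU(2n,IJ+JI,\Gamma_{\max}(IJ+JI))$, namely entry-wise congruence to $e$ modulo $IJ+JI$, and preservation of $f(u,u)$ modulo $(IJ+JI)\cap\Lambda$. No induction on length of products, no localisation, and none of the commutator calculus of \S\S\ref{ghg}--\ref{ghgh} is needed; the upper-level estimate is genuinely simpler than the lower-level estimate of Lemma~\ref{nlln}.

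First I would establish the matrix-level congruence. Writing $g=e+x$ with entries of $x$ in $I$ and $h=e+y$ with entries of $y$ in $J$, a direct expansion gives $gh-hg=xy-yx$, whose entries lie in $IJ+JI$ because this is a two-sided ideal of $A$. From the identity $[g,h]-e=(gh-hg)(hg)^{-1}$, together with the fact that $(hg)^{-1}$ has entries in $A$, the entries of $[g,h]-e$ also lie in $IJ+JI$. Hence $[g,h]\equiv e\pmod{IJ+JI}$.

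Next I would handle the form condition. Since $\GU(2n,I,\Gamma)$ and $\GU(2n,J,\Delta)$ are both subgroups of $\GU(2n,A,\Lambda)$ and the latter is a group, the commutator lies in $\GU(2n,A,\Lambda)$, which already gives $f([g,h]u,[g,h]u)-f(u,u)\in\Lambda$ for every $u\in V$. To extract the ideal information, set $w=([g,h]-e)u$; by the previous step each coordinate of $w$ lies in $IJ+JI$, and sesquilinearity yields
$$f([g,h]u,[g,h]u)-f(u,u)=f(u,w)+f(w,u)+f(w,w).$$
In each of the three summands, every term of the defining sum has at least one factor drawn from $IJ+JI$, so the whole expression lies in $IJ+JI$. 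Combined with membership in $\Lambda$, it lies in $(IJ+JI)\cap\Lambda=\Gamma_{\max}(IJ+JI)$.

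The only preliminary check is that $(IJ+JI,\Gamma_{\max}(IJ+JI))$ is really a form ideal in the sense of \S\ref{form ideals}, but this is immediate: $IJ+JI$ is involution invariant because $I$ and $J$ are, and $\alpha\bigl((IJ+JI)\cap\Lambda\bigr)\bar\alpha\subseteq(IJ+JI)\cap\Lambda$ holds since $IJ+JI$ is two-sided and $\alpha\Lambda\bar\alpha\subseteq\Lambda$. The containment $\Gamma_{\min}(IJ+JI)\subseteq\Gamma_{\max}(IJ+JI)$ is trivial. There is no real obstacle in this lemma; the argument is pure bookkeeping, and this clean separation from the deeper machinery is precisely what makes the upper-level estimate available before the lower-level one.
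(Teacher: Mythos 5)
Your argument is correct and follows essentially the same route as the paper's proof: a direct expansion showing $[g,h]\equiv e \pmod{IJ+JI}$, followed by verification of the form condition and intersection with $\Lambda$ to land in $\Gamma_{\max}(IJ+JI)=(IJ+JI)\cap\Lambda$. The only cosmetic difference is that the paper intersects $IJ+JI$ with $\Gamma+\Delta$ (obtained by composing the individual form-preservation properties of $g$ and $h$) rather than with $\Lambda$ directly, but since $\Gamma+\Delta\subseteq\Lambda$ both versions reach the same conclusion.
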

\begin{proof}
Take arbitrary $x\in\GU(2n,I,\Gamma)$ and $y\in\GU(2n,J,\Delta)$.
Then $x=e+x_{1}$, $x^{-1}=e+x_{2}$ for some
$x_{1},x_{2}\in M(2n,I)$ such that $x_{1}+x_{2}+x_{1}x_{2}=0$
and $y=e+y_{1}$, $y^{-1}=e+y_{2}$ for some $y_{1},y_{2}\in M(2n,J)$
such that $y_{1}+y_{2}+y_{1}y_{2}=0$. Modulo $IJ+JI$ one has
$$ [x,y]=(e+x_{1})(e+y_{1})(e+x_{2})(e+y_{2})\equiv
e+x_{1}+x_{2}+x_{1}x_{2}+y_{1}+y_{2}+y_{1}y_{2}=e. $$
\noindent
This shows that $[x,y]\in\GL(2n,A,IJ+JI)$. Clearly, $x\in\GU(2n,I,\Gamma)$
and $y\in\GU(2n,J,\Delta)$ preserve the sesquilinear form $f$ 
modulo $\Gamma$ and $\Delta$, respectively, see \S\ref{relative}.
Now, an easy calculation shows that $[x,y]$ preserves $f$ modulo 
$\Gamma+\Delta$. On the other hand, since $x\in GL(2n,A,I)$ it
follows that $f(xu,xu)-f(u,u)\in I$. Putting these observations 
together, we see that $[x,y]$ preserves $f$ modulo 
$$ (IJ+JI)\cap(\Gamma+\Delta)\subseteq (IJ+JI)\cap\Lambda=\Gamma_{\max}(IJ+JI). $$
\noindent
This finishes the proof.
\end{proof}
\begin{Lem}\label{EUGU}
Let\/ $n\ge 3$. Then for any two form ideals\/ $(I,\Gamma)$ and\/ $(J,\Delta)$
of the form ring\/ $(A,\Lambda)$ one has the following inclusion 
$$ [\EU(2n,I,\Gamma),\GU(2n,J,\Delta)]\le
\GU(2n,IJ+JI,{}^J\Gamma+{}^I\Delta+\Gamma_{\min}(IJ+JI)). $$
\end{Lem}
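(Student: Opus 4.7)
Lemma~\ref{GUGU} already places $[\EU(2n,I,\Gamma),\GU(2n,J,\Delta)]$ inside $\GU(2n,IJ+JI,\Gamma_{\max}(IJ+JI))$, so the ideal (level) part is secured and only the refinement of the form parameter from $\Gamma_{\max}(IJ+JI)$ down to ${}^J\Gamma+{}^I\Delta+\Gamma_{\min}(IJ+JI)$ remains. The plan is to reduce to elementary generators using normality, and then analyse the commutator explicitly on those generators.

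\textbf{Reduction to generators.} Set $H:=\GU(2n,IJ+JI,{}^J\Gamma+{}^I\Delta+\Gamma_{\min}(IJ+JI))$, which is normal in $\GU(2n,A,\Lambda)$ by the normality of principal congruence subgroups. Consider
\[
S:=\{\,x\in\GU(2n,A,\Lambda):[x,y]\in H\text{ for every }y\in\GU(2n,J,\Delta)\,\}.
\]
By commutator identity (C2), $S$ is closed under multiplication, and since both $H$ and $\GU(2n,J,\Delta)$ are normal in $\GU(2n,A,\Lambda)$, the set $S$ is closed under $\GU(2n,A,\Lambda)$-conjugation. Hence $S$ is a normal subgroup of $\GU(2n,A,\Lambda)$. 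Since $\EU(2n,I,\Gamma)$ is by definition the normal closure of $\FU(2n,I,\Gamma)$ in $\EU(2n,A,\Lambda)\subseteq\GU(2n,A,\Lambda)$, it is enough to show $\FU(2n,I,\Gamma)\subseteq S$: i.e.\ that $[T,y]\in H$ for every elementary generator $T=T_{ij}(\xi)$ of level $(I,\Gamma)$ and every $y\in\GU(2n,J,\Delta)$.

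\textbf{Explicit computation of the commutator.} Write $T=e+A$ (with $A^2=0$: either $A=\xi e_{ij}-\lambda^{(\varepsilon(j)-\varepsilon(i))/2}\bar\xi e_{-j,-i}$ in the short root case or $A=\xi e_{i,-i}$ in the long root case), and $y=e+B$ with $B\in M(2n,J)$. A direct expansion gives
\[
[T,y]=e+(Ay-yA-AyA)\,y^{-1},
\]
whose correction term has entries in $IJ+JI$, re-confirming the level bound. For $u\in V$, set $v=([T,y]-e)u$; the form change to control is
\[
f([T,y]u,[T,y]u)-f(u,u)=f(u,v)+f(v,u)+f(v,v).
\]

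\textbf{Identifying the contributions --- main obstacle.} The bulk of the work, and the chief obstacle, is to resolve this last expression into the three prescribed pieces of the target form parameter. After expanding $v$ via the rank-one or rank-two structure of $A$, the contributions sort into three natural types. First, the products $\bar\eta\zeta$ with $\eta,\zeta\in IJ+JI$ arising from $f(v,v)$, together with antisymmetrisations $\sigma-\lambda\bar\sigma$ with $\sigma\in IJ+JI$ appearing in $f(u,v)+f(v,u)$, collect into $\Gamma_{\min}(IJ+JI)$. Second, sandwich pieces $\bar\eta\gamma\eta$ with $\eta\in J$ and $\gamma\in\Gamma$, extracted from the long root case by flanking the $\Gamma$-valued entry of $A$ on both sides by $y$-entries in $J$, lie in ${}^J\Gamma$. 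Third, the symmetric sandwich pieces $\bar\xi\delta\xi$ with $\xi\in I$ and $\delta\in\Delta$, obtained by invoking the form-preservation identity $f(yu',yu')-f(u',u')\in\Delta$ to expose $\Delta$-valued combinations of $y$-entries and flanking them with $I$-entries of $A$, lie in ${}^I\Delta$. Treating the short and long root cases for $T$ separately, and repeatedly using the form-preservation identities for $T$ and for $y$ to replace bare $IJ+JI$-products by their $\Gamma$- or $\Delta$-refined counterparts at the required spots, finishes the bookkeeping and yields $[T,y]\in H$.
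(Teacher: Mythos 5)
Your reduction to the elementary generators $T_{ij}(\xi)$ of $\FU(2n,I,\Gamma)$ (via normality of the target congruence subgroup and of $\GU(2n,J,\Delta)$, plus (C2)) is exactly the paper's first step, and invoking Lemma~\ref{GUGU} to dispose of the congruence (level) part is also what the paper does; the identity $[T,y]=e+(Ay-yA-AyA)y^{-1}$ is correct. The problem is that everything after that is asserted rather than proved. Beyond Lemma~\ref{GUGU}, the entire content of the lemma is the verification that the quadratic defect $f([T,y]u,[T,y]u)-f(u,u)$ lies in the \emph{smaller} form parameter ${}^J\Gamma+{}^I\Delta+\Gamma_{\min}(IJ+JI)$, and you never carry this out: you state that the contributions ``sort into three natural types'' and that the bookkeeping ``finishes''. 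The paper's proof is precisely this bookkeeping — an explicit expansion of $\sum_{1\le i\le n}\bar g_{ij}g_{-i,j}$ for $g=[T_{l,-l}(\alpha),h]$, split into the cases $j\ne-l$ and $j=-l$, with each summand rewritten and assigned to ${}^I\Delta$, ${}^J\Gamma$ or $\Gamma_{\min}(IJ+JI)$ — and none of it appears in your proposal. A plan for a computation is not the computation.

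Worse, the one concrete assignment you do make is false as stated: a product $\bar\eta\zeta$ with $\eta,\zeta\in IJ+JI$ does \emph{not} in general lie in $\Gamma_{\min}(IJ+JI)$, which contains only the antisymmetrisations $\xi-\lambda\bar\xi$ and the sandwiches $\xi\alpha\bar\xi$ with $\xi\in IJ+JI$, $\alpha\in\Lambda$. In the paper's long-root computation the corresponding ``quadratic in $g-e$'' term, $\sum_i\overline{h_{il}\alpha\bar h_{-j,l}}\,h_{-i,l}\,\alpha\bar h_{-j,l}$, is not placed in $\Gamma_{\min}$ at all: because the correction is a multiple of the single column $he_l$, it regroups as $\xi\bigl(\sum_i\bar h_{il}h_{-i,l}\bigr)\bar\xi$ with $\xi\in I$ and $\sum_i\bar h_{il}h_{-i,l}=f(he_l,he_l)\in\Delta$, hence lands in ${}^I\Delta$. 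So the rank-one/rank-two structure of $A$ and the form-preservation of $h$ are not optional refinements ``at the required spots''; they are what makes each individual term land in the right summand, and the sorting differs between the long- and short-root cases. To repair the proof you must write out the expansion entry by entry, as the paper does for the long-root case.
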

\begin{proof} By the commutator identities (C1) and (C2)
and Lemma~5, it suffices to verify that
$$ g=[T_{lk}(\a),h]\in\GU(2n,IJ+JI,{}^J\Gamma+{}^I\Delta+\Gamma_{\min}(IJ+JI)), $$
\noindent
where $h=(h_{i,j})\in\GU(2n,J,\Delta)$ and $\a\in I$ for $l\ne-k$,
and $\a\in \lambda^{-(\e(i)+1)/2}\Gamma$ for $l=-k$. 
\par
By the previous lemma, we already have a similar inclusion with the
maximal value of relative form parameter. Thus, it only remains to
verify that 
$$ \displaystyle\sum_{1\le i\le n}\bar g_{ij}g_{-i,j}\in
{}^J\Gamma+{}^I\Delta+\Gamma_{\min}(IJ+JI). $$
\par
The proof is divided into two cases depending on whether the root
element $T_{lk}(\alpha)$ is of long or of short type, respectively. 
We attach a detailed calculation for the case of a long root type element. 
The case of a short root type element is settled by a similar calculation
which will be omitted.
\par\smallskip
Let $T_{l,-l}(\a)$ be a long root element, where $\a\in\lambda^{-(\e(l)+1)/2}\Gamma$.
In this case
\begin{eqnarray*}
g=[T_{l,-l}(\a),h]=T_{l,-l}(\a)\Bigl(e-\sum_{i,j}h_{i,l}\a\bar h_{-j,l}\Bigr).
\end{eqnarray*}

\par
Let us have a closer look at the sum $\sum_{1\le i\le n}\bar g_{ij}g_{-i,j}$.
When $j\ne-l$, we may, without loss of generality, assume that $l\ge 0$
and $j\ge 0$, and thus this sum can be rewritten in the form
\begin{multline*}
\sum_{1\le i\le n}\overline{ h_{i,l}\a\bar h_{-j,l}}h_{-i,l}\a\bar h_{-j,l}
-\lambda^{(\e(j)-\e(-l))/2} h_{-j,l}\a\bar h_{-j,l}+
\overline { \a h_{-l,l}\a\bar h_{-j,l}}h_{-l,l}\a\bar h_{-j,l}  \\
=\sum_{1\le i\le n}h_{-j,l}\lambda\bar\a
\bar h_{i,l} h_{-i,l}\a\bar h_{-j,l} -
h_{-j,l}\bar\lambda\a\bar h_{-j,l}+
h_{-j,l}\bar\a \bar h_{-l,l}\bar\a h_{-l,l}\a\bar h_{-j,l},
\end{multline*}
\noindent
where the first summand belongs to ${}^I\Delta$, whereas the second and
the third ones belong to ${}^J\Gamma$, as claimed.
\par
On the other hand, when $j=-l$, this sum equals
$$ \sum_{1\le i\le n}\overline{h_{il}\a\bar h_{ll}}h_{-i,l}\a\bar h_{ll}
-h_{ll}\bar\a\bar h_{ll}+\big(\bar\a-\overline{\a h_{-l,l}\a\bar h_{ll}})
(1-h_{-l,l}\a\bar h_{ll}\big), $$
\noindent
where the first sum belongs to ${}^I\Delta$, while the rest equals
\begin{multline*}
x=-h_{ll}\bar\a\bar h_{ll}+(\bar\a-{h_{ll}\bar\a\bar
h_{-l,l}\bar\a})(1-h_{-l,l}\a\bar h_{ll})= \\
-h_{ll}\bar\a\bar h_{ll}+\bar\a-\bar\a h_{-l,l}\a\bar h_{ll}
-{h_{l,l}\bar\a\bar h_{-l,l}\bar\a}+{ h_{l,l}\bar\a\bar h_{-l,l}\bar\a}
h_{-l,l}\a\bar h_{ll}=\\
-(1+h_{ll}-1)\bar\a(1+\overline{h_{ll}-1})+\bar\a +
\Big(\lambda\bar\a h_{-l,l}\bar\a\bar h_{ll}-
{h_{ll}\bar\a\bar h_{-l,l}\bar\a}\Big)+
h_{ll}\bar\a\bar h_{-l,l}\bar\a h_{-l,l}\a\bar h_{ll} 
\end{multline*}
where the two last summands belong to $\Gamma_{\min}(IJ+JI)$ and to
${}^J\Gamma$, respectively. Thus, modulo ${}^J\Gamma+\Gamma_{\min}(IJ+JI)$
one has
$$ x=-(h_{ll}-1)\bar\a+\lambda\a\overline{(h_{ll}-1)}-
(h_{l,l}-1)\a\overline{(h_{l,l}-1)}, $$ 
\noindent
where the first summand also belongs to $\Gamma_{\min}(IJ+JI)$,
whereas the second one belongs to ${}^J\Gamma$, respectively. 
\par
Thus, in both cases the desired sum belongs to 
${}^J\Gamma+{}^I\Delta+\Gamma_{\min}(IJ+JI)$, as claimed.

\end{proof}




\section{Relative versus absolute, and variations}

Now we are in a position to give another proof of Theorem~\ref{main}.
\begin{proof}[Second proof of Theorem $1$]
By Lemma~\ref{hww3} one has
$$ [\EU(2n,I,\Gamma),\GU(2n,J,\Delta)]=
\big [[\EU(2n,A,\Lambda),\EU(2n,I,\Gamma)],\GU(2n,J,\Delta)\big]. $$
\noindent
Since $(A,\Lambda)$ is a quasi-finite form ring and $n\geq 3$, by Lemma~\ref{keylem}, all the subgroups above are normal in $\GU(2n,A,\Lambda)$. Now  Lemma~\ref{HW1}
implies that 
\begin{multline*}
[\EU(2n,I,\Gamma),\GU(2n,J,\Delta)]\le \\
 \le \big [\EU(2n,I,\Gamma),[\EU(2n,A,\Lambda),\GU(2n,J,\Delta)]\big ]\cdot\\
\big [\EU(2n,A,\Lambda),[\EU(2n,I,\Gamma),\GU(2n,J,\Delta)]\big ].
\end{multline*}
\noindent
Applying to the first factor on the right hand side the
{\it absolute\/} standard commutator formula we immediately see
that it {\it coincides\/} with $[\EU(2n,I,\Gamma),\EU(2n,J,\Delta)]$.
\par
On the other hand, applying to the second factor on the right hand
Lemma~\ref{EUGU} followed by Lemma~\ref{nlln}, we can conclude that it is 
{\it contained\/} in
\begin{multline*}
[\EU(2n,A,\Lambda),\GU(2n,IJ+JI,{}^J\Gamma+{}^I\Delta+\Gamma_{\min}(IJ+JI))]=\\
\EU(2n,IJ+JI,{}^J\Gamma+{}^I\Delta+\Gamma_{\min}(IJ+JI))\le\\
[\EU(2n,I,\Gamma),\EU(2n,J,\Delta)].
\end{multline*}
\noindent
Thus, the left hand side is contained in the right hand side, the
inverse inclusion is obvious.
\end{proof}

It turns out, that for commutative form rings one can prove a
slightly stronger result.
\begin{The}\label{the2}
Let\/ $n\ge 3$, and $(R,\Lambda)$ be a commutative form
ring. Then for any two form ideals\/ $(I,\Gamma)$ and $(J,\Delta)$
of the form ring $(R,\Lambda)$ one has
$$ [\EU(2n,I,\Gamma),\CU(2n,J,\Delta)]=[\EU(2n,I,\Gamma),\EU(2n,J,\Delta)]. $$
\end{The}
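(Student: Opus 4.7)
The plan is to mimic the second proof of Theorem~\ref{main}, with $\GU(2n,J,\Delta)$ replaced by $\CU(2n,J,\Delta)$; this is viable in the commutative setting because one can establish a $\CU$-analog of Lemma~\ref{EUGU}. The reverse inclusion in the statement is immediate since $\EU(2n,J,\Delta)\subseteq\CU(2n,J,\Delta)$, so we focus on the forward one.

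First, using perfectness (Lemma~\ref{hww3}) to rewrite $\EU(2n,I,\Gamma)=[\EU(2n,R,\Lambda),\EU(2n,I,\Gamma)]$, and invoking the three subgroup lemma (Lemma~\ref{HW1}) with $F=\EU(2n,R,\Lambda)$, $H=\EU(2n,I,\Gamma)$, $L=\CU(2n,J,\Delta)$, we obtain
\begin{multline*}
[\EU(2n,I,\Gamma),\CU(2n,J,\Delta)]\subseteq\\
\bigl[[\EU(2n,R,\Lambda),\CU(2n,J,\Delta)],\EU(2n,I,\Gamma)\bigr]\cdot\bigl[\EU(2n,R,\Lambda),[\EU(2n,I,\Gamma),\CU(2n,J,\Delta)]\bigr].
\end{multline*}
The second identity of Lemma~\ref{keylem} gives $[\EU(2n,R,\Lambda),\CU(2n,J,\Delta)]=\EU(2n,J,\Delta)$, so the first factor is exactly the right-hand side $[\EU(2n,I,\Gamma),\EU(2n,J,\Delta)]$ of the theorem.

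The substance of the proof is the second factor. We claim the commutative analog of Lemma~\ref{EUGU}:
$$[\EU(2n,I,\Gamma),\CU(2n,J,\Delta)]\subseteq\GU\bigl(2n,IJ+JI,{}^J\Gamma+{}^I\Delta+\Gamma_{\min}(IJ+JI)\bigr).$$
Granting this, the first identity of Lemma~\ref{keylem} together with Lemma~\ref{nlln} shows
\begin{multline*}
\bigl[\EU(2n,R,\Lambda),[\EU(2n,I,\Gamma),\CU(2n,J,\Delta)]\bigr]\\
\subseteq\EU\bigl(2n,IJ+JI,{}^J\Gamma+{}^I\Delta+\Gamma_{\min}(IJ+JI)\bigr)\subseteq[\EU(2n,I,\Gamma),\EU(2n,J,\Delta)],
\end{multline*}
so the second factor is contained in the right-hand side as well, completing the argument.

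The main obstacle is precisely this strengthening of Lemma~\ref{EUGU}. Commutativity of $R$ enters essentially: for $y\in\CU(2n,J,\Delta)$ the defining condition $[y,\GU(2n,R,\Lambda)]\subseteq\GU(2n,J,\Delta)$ combined with commutativity forces $y\equiv c\cdot e\pmod{J}$ for some scalar $c\in R$ (necessarily with $c\bar c\equiv 1\pmod{J}$), since over a commutative ring the centralizer of the elementary group modulo $J$ is reduced to scalars. Writing $y=(ce)\cdot z$ with $z-e\in M(2n,J)$ and using that the scalar $ce$ commutes with every $x\in\EU(2n,I,\Gamma)$, one has $[x,y]=[x,z]$. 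The matrix computation from the proof of Lemma~\ref{GUGU} then shows $[x,z]-e\in M(2n,IJ+JI)$. The delicate part is tracking the form parameter down to ${}^J\Gamma+{}^I\Delta+\Gamma_{\min}(IJ+JI)$ rather than the weaker $\Gamma_{\max}(IJ+JI)$; here one repeats the closing calculation of Lemma~\ref{EUGU} for $z$ in place of $h\in\GU(2n,J,\Delta)$, using the inherited constraint $[z,\GU(2n,R,\Lambda)]\subseteq\GU(2n,J,\Delta)$ to guarantee that the relevant sum $\sum_{1\le i\le n}\bar g_{ij}g_{-i,j}$ for $g=[T_{lk}(\alpha),z]$ lands in ${}^I\Delta$ modulo ${}^J\Gamma+\Gamma_{\min}(IJ+JI)$.
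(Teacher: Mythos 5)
Your proposal is correct and follows essentially the same route as the paper: the paper proves Theorem~\ref{the2} by repeating the second proof of Theorem~\ref{main} verbatim, replacing Lemma~\ref{EUGU} by its $\CU$-analogue (the paper's Lemma~\ref{EUCU}, stated with $IJ$ in place of $IJ+JI$ since $R$ is commutative), which it justifies by exactly your observation that an element of $\CU(2n,J,\Delta)$ is congruent modulo $J$ to a central scalar matrix, so the level computations of Lemmas~\ref{GUGU} and~\ref{EUGU} go through unchanged.
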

The proof of Theorem~\ref{the2} repeats this proof word for word, but the
reference to Lemma~\ref{EUGU} should be replaced by the reference to the
following slightly stronger Lemma.
\begin{Lem}\label{EUCU}
Let\/ $n\ge 3$ and $(R,\Lambda)$ be a commutative
form ring. Then for any two form ideals $(I,\Gamma)$ and\/ $(J,\Delta)$
of the form ring\/ $(R,\Lambda)$ one has the following inclusion
$$ [\EU(2n,I,\Gamma),\CU(2n,J,\Delta)]\le
\GU(2n,IJ,{}^J\Gamma+{}^I\Delta+\Gamma_{\min}(IJ)). $$
\end{Lem}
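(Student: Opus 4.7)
The plan is to imitate the proof of Lemma \ref{EUGU} line by line, exploiting two simplifications available in the present setting. Commutativity of $R$ collapses $IJ+JI$ to $IJ$, so the target form ideal matches that of Lemma \ref{EUGU} after this simplification, and all ring-valued quantities commute freely throughout the matrix computations. The defining property of $\CU$ furnishes the second simplification: for any $T=T_{lk}(\alpha)\in\EU(2n,R,\Lambda)$ and any $h\in\CU(2n,J,\Delta)$, the commutator $g:=[T,h]$ lies a priori in $\GU(2n,J,\Delta)$; hence $g-e\in M(2n,J)$ and $g$ preserves the sesquilinear form $f$ modulo $\Delta$ for free.

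As in Lemma \ref{EUGU}, the commutator identities (C1) and (C2), combined with the normality of the right-hand side subgroup (established as in Lemma 5), reduce the claim to showing that $g=[T_{lk}(\alpha),h]$ lies in $\GU(2n,IJ,{}^J\Gamma+{}^I\Delta+\Gamma_{\min}(IJ))$ for each generator $T_{lk}(\alpha)\in\FU(2n,I,\Gamma)$. Writing $T_{lk}(\alpha)=e+x_1$ with $x_1^2=0$, a direct matrix expansion yields
$$ g-e=x_1-hx_1h^{-1}-x_1hx_1h^{-1}, $$
and each entry of $g-e$ is an $R$-linear expression whose coefficients are products of entries of $h$ and $h^{-1}$ with the factors $\alpha$ and $\bar\alpha$. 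Thus each entry lies in the ideal of $R$ generated by $\alpha$, which is contained in $I$; combined with the containment in $M(2n,J)$ supplied by the $\CU$ hypothesis, each entry lies in $I\cap J$.

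The next step is upgrading $I\cap J$ to $IJ$ at the matrix-entry level, and verifying that the sesquilinear residue $\sum_{1\le i\le n}\bar g_{ij}g_{-i,j}$ lies in ${}^J\Gamma+{}^I\Delta+\Gamma_{\min}(IJ)$. I would mirror the case analysis of Lemma \ref{EUGU} — short-root versus long-root generator, and the column $j=-l$ versus $j\ne -l$ — and, for each case, rewrite the entries of $g-e$ and the form-preservation sum into sesquilinear shapes that manifestly match the generators of ${}^J\Gamma$, ${}^I\Delta$, and $\Gamma_{\min}(IJ)$. Commutativity of $R$ is essential here for regrouping factors $\alpha,\bar\alpha,h_{pq}$ into these patterns. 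Wherever the proof of Lemma \ref{EUGU} used $h_{ll}-1\in J$ (coming from $h\in\GU(J,\Delta)$), I would substitute the auto-membership $g-e\in M(2n,J)$ from the $\CU$ hypothesis to supply the missing $J$-factor.

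The main obstacle will be the form-preservation bookkeeping when $j=-l$, where in the proof of Lemma \ref{EUGU} the cancellations that collapsed terms into $\Gamma_{\min}(IJ+JI)$ relied on the direct factorization $h=e+h_1$ with $h_1\in M(2n,J)$. Without this factorization, the $\Gamma_{\min}(IJ)$ contribution must be rederived by combining the $J$-constraint on $g-e$ with the explicit $I$-factor provided by $\alpha$, using commutativity to pair the resulting terms into the generating shapes $\xi-\lambda\bar\xi$ and $\xi\beta\bar\xi$ for $\Gamma_{\min}(IJ)$; this is the only genuinely new calculation compared with Lemma \ref{EUGU}.
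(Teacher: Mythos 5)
Your reduction to a single generator and the expansion $g-e=x_1-hx_1h^{-1}-x_1hx_1h^{-1}$ are fine, and you correctly locate the whole difficulty in the passage from $I\cap J$ to $IJ$ (and, for the form parameter, from $\Delta$ to ${}^J\Gamma+{}^I\Delta+\Gamma_{\min}(IJ)$). But the mechanism you propose for that passage does not work. Knowing that every entry of $g-e$ carries an explicit factor $\alpha\in I$ and that the assembled matrix $g-e$ lies in $M(2n,J)$ only yields membership in $(\alpha R+\bar\alpha R)\cap J$, and an element $\alpha r$ that happens to lie in $J$ need not lie in $IJ$: the $J$-membership of the product cannot be transferred onto the cofactor $r$ (take $I=J=\alpha R$ in $R=\mathbb{Z}$, $\alpha=2$, $r=1$; then $\alpha r\in I\cap J$ but $\alpha r\notin IJ=\alpha^2R$). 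The same objection defeats your plan for producing the $\Gamma_{\min}(IJ)$ and ${}^J\Gamma$ contributions in the quadratic-form residue. In short, ``supplying the missing $J$-factor from $g-e\in M(2n,J)$'' is not an algebraic operation; the $I$-information and the $J$-information live on the assembled entries and cannot be multiplied together.

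The missing ingredient is the structural description of elements of $\CU(2n,J,\Delta)$, which is exactly what the paper's remark after the lemma points to: since $h$ commutes with all of $\GU(2n,R,\Lambda)$ modulo $J$, one has $h\equiv\beta e\pmod J$ for some $\beta$ whose image in $R/J$ is a unit. Writing $h=\beta e+y_1$ and $h^{-1}=\gamma e+y_2$ with $y_1,y_2\in M(2n,J)$ and $\beta\gamma\equiv 1\pmod J$, every term of $hx_1h^{-1}$ other than $\beta\gamma\,x_1$ visibly lies in $M(2n,JI+IJ+JIJ)=M(2n,IJ)$, and $\beta\gamma\,x_1\equiv x_1\pmod{M(2n,IJ)}$, so $g-e\in M(2n,IJ)$ term by term; the analogous substitution carries the form-parameter computation of Lemma~\ref{EUGU} through unchanged. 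Centrality of $\beta$ --- automatic here since $R$ is commutative --- is what makes the cross terms cancel, and its failure is precisely why the lemma breaks down over noncommutative rings. You should therefore build the proof on this congruence for $h$ itself, rather than on the a priori inclusion $[T,h]\in\GU(2n,J,\Delta)$, which is too coarse for the target level $(IJ,\,{}^J\Gamma+{}^I\Delta+\Gamma_{\min}(IJ))$.
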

This lemma is verified by calculations closely imitating those used 
to establish Lemmas \ref{GUGU} and \ref{EUGU}. However, the difference 
is that now the element $y$ figuring in the proof of Lemma~\ref{GUGU} 
is congruent modulo $J$ not to $e$ itself, but to some $\beta e$, 
where $\beta$ is a unit of the ring $R/J$. It remains to observe
that when $\beta$ is {\it central\/} in $R$, the argument goes 
through without any changes.
\par 
One can show by examples that Lemma~\ref{EUCU} definitely fails for 
non-commutative rings. The reason is as follows. By the very definition 
of $\CU(2n,J,\Delta)$, the above element $\beta$ is central modulo 
$J$. However, it does not have to be central in the ring $R$ itself,
and the summands in the proof of Lemma~\ref{GUGU} do not cancel. As
a result, the level may be much higher than expected.
\par
Lemma~\ref{hww3} asserts that the commutator of two elementary subgroups, one
of which is absolute, is itself an elementary subgroup. One can ask,
whether one always has
$$ [\EU(2n,I,\Gamma),\EU(2n,J,\Delta)]=
\EU(2n,IJ+JI,{}^J\Gamma+{}^I\Delta+\Gamma_{\min}(IJ+JI)). $$
\noindent
Easy examples show that in general this equality may fail quite
spectacularly. In fact, when $I=J$, one can only conclude that
$$ \EU(2n,I^2,\Gamma^2)\le [\EU(2n,I,\Gamma),\EU(2n,I,\Gamma)]\le\EU(2n,I,\Gamma). $$
\noindent
with right bound attained for some {\it proper\/} ideals, such as an
ideal $A$ generated by a central idempotent.
\par
Nevertheless, the true reason, why the equality in Lemma~\ref{hww3} holds, is
not the fact that one of the ideals $I$ or $J$ coincides with $A$,
but only the fact that $I$ and $J$ are comaximal.
\begin{The}
Let\/ $n\ge 3$, and\/ $(A,\Lambda)$ be an arbitrary form
ring for which absolute standard commutator formulae are satisfied.
Then for any two comaximal form ideals\/ $(I,\Gamma)$ and $(J,\Delta)$
of the form ring $(A,\Lambda)$, $I+J=A$, one has the following equality
$$ [\EU(2n,I,\Gamma),\EU(2n,J,\Delta)]=
\EU(2n,IJ+JI,{}^J\Gamma+{}^I\Delta+\Gamma_{\min}(IJ+JI)). $$
\end{The}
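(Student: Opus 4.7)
The plan is to establish the nontrivial inclusion $L \subseteq \EU(2n,K,\Omega)$, where $L = [\EU(2n,I,\Gamma),\EU(2n,J,\Delta)]$, $K = IJ+JI$, $\Omega = {}^J\Gamma + {}^I\Delta + \Gamma_{\min}(K)$, and $E = \EU(2n,A,\Lambda)$; the reverse inclusion $\EU(2n,K,\Omega) \subseteq L$ is already Lemma~\ref{nlln} and requires neither comaximality nor the absolute standard commutator formulae.

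The first moves will exactly parallel the second proof of Theorem~\ref{main}. Lemma~\ref{EUGU} gives $L\le\GU(2n,K,\Omega)$; normality of the principal congruence subgroup in $\GU(2n,A,\Lambda)$ then forces $L \le \CU(2n,K,\Omega)$ directly from the definition of $\CU$. The hypothesis that absolute standard commutator formulae hold, in the sharper form $\EU(2n,K,\Omega) = [E,\CU(2n,K,\Omega)]$, yields $[E,L]\subseteq \EU(2n,K,\Omega)$, so $L/\EU(2n,K,\Omega)$ lies in the centre of $E/\EU(2n,K,\Omega)$. This centrality is the crucial leverage: it makes the commutator pairing
$$
\EU(2n,I,\Gamma)\times \EU(2n,J,\Delta) \longrightarrow L/\EU(2n,K,\Omega),\qquad (x,y)\mapsto [x,y]\cdot\EU(2n,K,\Omega),
$$
bi-multiplicative, so by Lemma~\ref{genelm} the problem reduces to checking $[Z_{hk}(\xi,\zeta),\,Z_{ij}(\xi',\zeta')]\in \EU(2n,K,\Omega)$ on the generators $Z_{hk}(\xi,\zeta)={}^{T_{kh}(\zeta)}T_{hk}(\xi)$ of the two relative elementary groups.

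At the generator level comaximality enters. Writing $1=a+b$ with $a\in I$ and $b\in J$, every short root transvection with parameter in $A$ splits as $T_{pq}(\chi)=T_{pq}(\chi a)\,T_{pq}(\chi b)$, placing the factors in $\FU(2n,I,\Gamma)$ and $\FU(2n,J,\Delta)$ respectively. Applied to the conjugator $T_{kh}(\zeta)$ in each $Z$-generator, this brings the conjugator into $\EU(2n,I,\Gamma)\cdot\EU(2n,J,\Delta)$, and, combined with the centrality of $L/\EU(2n,K,\Omega)$ in $E/\EU(2n,K,\Omega)$, reduces $[Z_{hk}(\xi,\zeta),\,Z_{ij}(\xi',\zeta')]$ modulo $\EU(2n,K,\Omega)$ to a product of bare elementary commutators $[T_{pq}(\mu),T_{rs}(\nu)]$ with $\mu\in I\cup\Gamma$ and $\nu\in J\cup \Delta$. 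The four-case Steinberg analysis based on (R3)--(R6) already carried out in the proofs of Lemmas~\ref{Lem:cong} and~\ref{Lem:comm} places each such elementary commutator into $\FU(2n,K,\Omega)\subseteq\EU(2n,K,\Omega)$.

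The main obstacle will be the splitting step for the long-root conjugators: when $k=-h$ the parameter $\zeta$ lies in $\Lambda$, which need not equal $\Gamma+\Delta$, so the ideal-level decomposition $1=a+b$ does not directly respect the form-parameter structure. The assumption $n\ge 3$ is essential here, since it allows one to invoke relation~(R5) to express $T_{k,-k}(\zeta)$ modulo $\Lambda_{\min}$ as a commutator of short roots whose parameters in $A$ do split, with the residual contribution from $\Lambda\setminus\Lambda_{\min}$ absorbed into $\Gamma_{\min}(K)\subseteq\Omega$. Once this delicate step is performed cleanly, the remainder of the proof is routine bi-multiplicative bookkeeping combined with the Steinberg case analysis.
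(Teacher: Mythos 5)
Your route is genuinely different from the paper's, and it contains one real gap. The paper never reduces to generators: it observes that comaximality gives $\EU(2n,A,\Lambda)=\EU(2n,I,\Gamma)\cdot\EU(2n,J,\Delta)$, so that by Lemma~\ref{hww3} one has $\EU(2n,I,\Gamma)=[\EU(2n,I,\Gamma),\EU(2n,I,\Gamma)\cdot\EU(2n,J,\Delta)]$; expanding via (C1), bounding $[\EU(2n,I,\Gamma),\EU(2n,J,\Delta)]$ by $\GU(2n,IJ+JI,{}^J\Gamma+{}^I\Delta+\Gamma_{\min}(IJ+JI))$ through Lemma~\ref{EUGU}, commuting the resulting inclusion with $\EU(2n,J,\Delta)$, and applying the absolute formula twice finishes the proof in a few lines, with no Steinberg case analysis at all. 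Your opening moves (the level bound $L\le\GU\le\CU$ of level $(IJ+JI,\Omega)$, hence $[E,L]\le\EU(2n,IJ+JI,\Omega)$ by Lemma~\ref{keylem}, hence centrality and bimultiplicativity) are correct and do reduce the problem to the generators of Lemma~\ref{genelm}; and for a short-root conjugator the splitting $T_{kh}(\zeta)=T_{kh}(\zeta a)T_{kh}(\zeta b)$ can be pushed through --- though note that ${}^{T_{kh}(\zeta a)}T_{hk}(\xi)$ is not yet a bare transvection and must first be expanded by the Steinberg relations into a bounded product of elements of $\FU^1(2n,I,\Gamma)$ before bimultiplicativity applies, so ``routine bookkeeping'' is an understatement.

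The gap is the long-root conjugator $T_{k,-k}(\zeta)$ with $\zeta\in\lambda^{-(\e(k)+1)/2}\Lambda$. Your proposed fix does not work as stated: relation (R5) only manufactures long-root elements whose parameters lie in $\Lambda_{\min}$, so a general $\zeta$ leaves a residual long-root factor $T_{k,-k}(\zeta')$ with $\zeta'$ an arbitrary representative of $\zeta$ modulo $\Lambda_{\min}$; this residue is a generic element of $\Lambda$, has nothing to do with $IJ+JI$, and cannot be ``absorbed into $\Gamma_{\min}(IJ+JI)$''. What actually rescues the step is that comaximality forces $\Lambda=\Gamma+\Delta$: writing $1=a+b$ with $a\in I$, $b\in J$, every $\alpha\in\Lambda$ decomposes as
$$ \alpha=a\alpha\bar a+b\alpha\bar b+\bigl(a\alpha\bar b-\lambda\overline{a\alpha\bar b}\bigr), $$
where the first summand lies in $\Gamma_{\min}(I)\subseteq\Gamma$, the second in $\Gamma_{\min}(J)\subseteq\Delta$, and the third in $\Gamma_{\min}(IJ)\subseteq\Gamma\cap\Delta$ since $a\alpha\bar b\in IJ$. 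Then (R2) splits $T_{k,-k}(\zeta)$ into an $\FU^1(2n,I,\Gamma)$-factor times an $\FU^1(2n,J,\Delta)$-factor, exactly as in the short-root case. Be aware, however, that once you know $\Lambda=\Gamma+\Delta$, the first lemma of \S\ref{level} already gives $\EU(2n,A,\Lambda)=\EU(2n,I,\Gamma)\cdot\EU(2n,J,\Delta)$, and at that point the paper's short group-theoretic argument becomes available and renders the whole reduction to generators unnecessary.
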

\begin{proof}
First of all, observe that by Lemmas 3 and \ref{nllnnn} one has
\begin{multline*}
\EU(2n,I,\Gamma)=[\EU(2n,I,\Gamma),\EU(n,A,\Lambda)]=\\
[\EU(2n,I,\Gamma),\EU(2n,I,\Gamma)\cdot \EU(2n,J,\Delta)].
\end{multline*}
\noindent
Thus,
\begin{multline*}
\EU(2n,I,\Gamma)\le [\EU(2n,I,\Gamma),\EU(2n,I,\Gamma)]\cdot
[\EU(2n,I,\Gamma),\EU(2n,J,\Delta)]\le\\
\le [\EU(2n,I,\Gamma),\EU(2n,I,\Gamma)]\cdot
\GU(2n,IJ+JI,{}^J\Gamma+{}^I\Delta+\Gamma_{\min}(IJ+JI)).
\end{multline*}
\noindent
Commuting this inclusion with $\EU(2n,J,\Delta)$, we see that
\begin{multline*}
[\EU(2n,I,\Gamma),\EU(2n,J,\Delta)]\le
\big [[\EU(2n,I,\Gamma),\EU(2n,I,\Gamma)],\EU(2n,J,\Delta)\big ]\cdot\\
[\GU(2n,IJ+JI,{}^J\Gamma+{}^I\Delta+\Gamma_{\min}(IJ+JI)),\EU(2n,J,\Delta)].
\end{multline*}
\par
The absolute standard commutator formula, applied to the second
factor, shows that its is contained in
\begin{multline*}
[\GU(2n,IJ+JI,{}^J\Gamma+{}^I\Delta+\Gamma_{\min}(IJ+JI)),\EU(2n,J,\Delta)]\le\\
[\GU(2n,IJ+JI,{}^J\Gamma+{}^I\Delta+\Gamma_{\min}(IJ+JI)),\EU(n,A,\Lambda)]=\\
\EU(2n,IJ+JI,{}^J\Gamma+{}^I\Delta+\Gamma_{\min}(IJ+JI)).
\end{multline*}
\par
On the other hand, applying to the first factor Lemma \ref{EUGU}, 
and then again the absolute standard commutator formula, we see 
that it is contained in
\begin{multline*}
[[\EU(2n,I,\Gamma),\EU(2n,J,\Delta)],\EU(2n,I,\Gamma)]\le \\
[\GU(2n,IJ+JI,{}^J\Gamma+{}^I\Delta+\Gamma_{\min}(IJ+JI)),\EU(2n,I,\Gamma)]\le\\
\le[\GU(2n,IJ+JI,{}^J\Gamma+{}^I\Delta+\Gamma_{\min}(IJ+JI)),\EU(2n,A,\Lambda)]=\\
\EU(2n,IJ+JI,{}^J\Gamma+{}^I\Delta+\Gamma_{\min}(IJ+JI)).
\end{multline*}
\par
Together with Lemma \ref{nllnnn} this finishes the proof.
\end{proof}


\section{Where next?}

In this section we state and very briefly discuss some further
relativisation problems, related to the results of the present paper.
We are convince that these problems can be successfully addressed
with our methods.
\par
In the following problems we propose to generalise results by
Sivatsky--Stepanov \cite{SiSt} and Stepanov--Vavilov \cite{SV10}
to Bak's unitary groups.
\begin{prob}
\label{p1} Obtain explicit length estimates in the relative
conjugation calculus and commutator calculus.
\end{prob}
\begin{prob}
\label{p2} Let $\jdim(R)<\infty$. Prove that the width of commutators
in elementary generators is bounded, and estimate this width.
\end{prob}
Alexei Stepanov (unpublished) established that the above width is bounded,
without actually producing any specific bound. We believe that the methods
of the present paper allow to give an {\it exponential\/} bound, similar
to the one obtained for Chevalley groups over commutative rings \cite{SV10},
by developing a constructive version of the localisation method
from Hazrat--Vavilov \cite{RN1}. We believe that obtaining a similar
constructive version of the results of the present paper would be simply
a matter of patience. On the other hand, to obtain a {\it polynomial\/}
bound, similar to that obtained for $\GL(n,A)$ in \cite{SiSt}, one would
need to combine our methods with a {\it full-scale\/} generalisation of
decomposition of unipotents \cite{SV}, including the explicit polynomial
formulae for the conjugates of root unipotents. This seems to be somewhat
remote.
\par
In the main results of the present paper we always assume that $n\ge 3$.
Obviously, due to the exceptional behaviour of the orthogonal group
$\SO(4,A)$, these results do not fully generalise to the case, where
$n=2$. However, we believe they do generalise under appropriate additional
assumptions on the form ring, such as $\Lambda A+A\Lambda=\Lambda$.
Known results, including the work by Vyacheslav Kopeiko \cite{kopeiko}
and the work by Bak--Vavilov \cite{BV2} clearly indicate both that
this should be possible, and that the analysis of the case $n=2$ be
considerably harder from a technical viewpoint, than that of the case
$n\ge 3$.
\begin{prob}
\label{p3} Develop conjugation calculus and commutator calculus for
in the group $\GU(4,A,\Lambda)$, provided $\Lambda A+A\Lambda=\Lambda$.
\end{prob}
\begin{prob}
\label{p4} Prove relative standard commutator formula for the group
$\GU(4,A,\Lambda)$, provided $\Lambda A+A\Lambda=\Lambda$.
\end{prob}
Solution of the following problem would be a broad generalisation of
Bak \cite{B4}, Hazrat \cite{RH,RH2}, and Bak--Hazrat--Vavilov \cite{BRN}.
Clearly, it will require the full force of localisation--completion.
\begin{prob}
\label{p5} Let $R$ be a ring of finite Bass--Serre dimension
$\delta(R)=d<\infty$, and let $(I_i,\Gamma_i)$, $1\le i\le m$, be
form ideals of $(A,\Lambda)$. Prove that for any $m>d$ one has
\begin{multline*}
[[\ldots[\GU(2n,I_1,\Gamma_1),\GU(2n,I_2,\Gamma_2)],\ldots],\GU(2n,I_m,\Gamma_m)]=\\
[[\ldots[\EU(2n,I_1,\Gamma_1),\EU(2n,I_2,\Gamma_2)],\ldots],\EU(2n,I_m,\Gamma_m)].
\end{multline*}
\end{prob}

Let us mention also generalisation of the results of the present paper
to other types of groups. In view of \cite{RN1,BRN,SV10} the first of
the problems below seems almost immediate, and it is our intention to
address it in a subsequent paper.
\begin{prob}
\label{p6} Obtain results similar to those of the present paper
for Chevalley groups.
\end{prob}
The other two problems, especially the last one, seem to be {\it much\/}
more challenging, from a technical viewpoint. In both cases root subgroups
are not abelian, and the analogues of the Chevalley commutator formula
are much fancier, than in the familiar cases of Chevalley groups,
or Bak's unitary groups. As a matter of fact, the required version of
localisation has not been developed in either of these contexts, even at
the absolute level.
\par
The following problem refers to the context of odd unitary groups,
as created by Victor Petrov \cite{P1,petrov2,petrov3}.
\begin{prob}
\label{p7} Generalise results of the present paper to odd unitary groups.
\end{prob}
The last problem refers to the recent context of isotropic reductive
groups. Of course, it only makes sense over commutative rings, but on the
other hand, a lot of new complications occur, due to the fact that
relative roots do not form a root system, and the interrelations of
the elementary subgroup with the group itself are abstruse even over
fields (the Kneser--Tits problem). Still, we are convinced that most
of necessary tools are already there, in the remarkable recent papers
by Victor Petrov and Anastasia Stavrova, \cite{PS08,stavrova}. Of
course, one will have to develop the whole conjugation and commutator
calculus almost from scratch.
\begin{prob}
\label{p8} Obtain results similar to those of the present paper for
[groups of points of] isotropic reductive groups.
\end{prob}

The authors thank Alexei Stepanov for extremely useful discussions.


\end{document}